\documentclass[9pt]{amsart}
\usepackage[]{amsmath, amsthm, amsfonts,  scalerel}
\usepackage{graphicx}
\usepackage[utf8]{inputenc}
\usepackage[english]{babel}
\usepackage[all]{xy}
 \usepackage{xcolor}
 \usepackage{tikz-cd}
\usepackage{adjustbox}
\usepackage{mathtools}
\usepackage{hyperref}
\usetikzlibrary{datavisualization}
\usetikzlibrary{datavisualization.formats.functions}
\usepackage{pgfplots}
\newcommand{\boxtimes}{\mathbin{\scalerel*{\tikz{\draw[line width=1.1pt](0,0)rectangle(1,1)--(0,0)(1,0)--(0,1);}}{\otimes}}}
\usepackage[
backend=biber,
style=alphabetic
]{biblatex}

\usepackage[left=3cm,right=3cm,top=3cm,bottom=3cm]{geometry}
\usepackage{cleveref}
\usepackage[left=3cm,right=3cm,top=3cm,bottom=3cm]{geometry}
\usepackage{cleveref}

\newcommand {\N} {{\mathbb N}}
\newcommand {\C} {{\mathbb C}}
\newcommand {\fC} {{\mathfrak{C}}}

\newcommand {\Z} {{\mathbb Z}}
\newcommand {\Q} {{\mathbb Q}}
\newcommand {\cF} {{\mathcal F}}

\newcommand {\cY} {{\mathcal Y}}

\newcommand {\cH} {{\mathcal H}}

\newcommand {\cL} {{\mathcal L}}

\newcommand {\cA} {{\mathcal A}}
\newcommand {\cE} {{\mathcal E}}

\newcommand {\cM} {{\mathcal M}}
\newcommand {\cV} {{\mathcal V}}
\newcommand {\cU} {{\mathcal U}}
\newcommand {\bfj} {{\mathbf{j} }}

\newcommand {\cO} {{\mathcal O}}
\newcommand {\D} {\mathbb{D}}

\newcommand{\tX}{\tilde{X}}
\newcommand{\tY}{\tilde{Y}}

\newcommand{\cD}{\mathcal{D}}
\newcommand{\cN}{\mathcal{N}}
\newcommand{\cX}{\mathcal{X}}
\newcommand{\bR}{\textbf{R}}

\newcommand{\Om}{\underline{\Omega}_{X}^{0}}
\newcommand{\OmY}{\underline{\Omega}_{Y}^{0}}
\newcommand{\Omp}{\underline{\Omega}_{X}^{p}}
\newcommand{\OmpY}{\underline{\Omega}_{Y}^{p}}

\DeclareMathOperator{\im}{im}

\DeclareMathOperator{\Spec}{Spec}

\DeclareMathOperator{\lct}{lct}
\DeclareMathOperator{\lcdef}{lcdef}

\newcommand{\bT}{\begin{tikzcd}}
\newcommand{\eT}{\end{tikzcd}}

\newtheorem{thm}[subsection]{Theorem}
\newtheorem{cor}[subsection]{Corollary}
\newtheorem{lemma}[subsection]{Lemma}
\newtheorem{prop}[subsection]{Proposition}
\newtheorem{defn}[subsection]{Definition}
\newtheorem{rmk}[subsection]{Remark}
\newtheorem{ex}[subsection]{Example}

\newtheorem{set}[subsection]{Setting}
\newtheorem{nota}[subsection]{Notation}

\begin{document}
\author{ Scott Hiatt }
\date{\today}
  \address{
 Department of Mathematics\\
  University of Wisconsin-Madison\\
  Madison, WI 53706\\
  U.S.A.}
  \email{shiatt@wisc.edu}

 \title{Cyclic covers via Mixed Hodge modules }

\maketitle

\begin{abstract}
    Suppose we are given an arbitrary line bundle $\cL$ on a complex algebraic variety $X$, not necessarily smooth. For a positive integer $N$, suppose there exists a global section $s \in \Gamma(X, \cL^{N})$ that defines an effective Cartier divisor $D$. If we denote $\pi: Y \rightarrow X$ to be the $N$-fold cyclic covering of the divisor $D$ resulting from the global section $s$, we describe the cyclic covering in terms of Morihiko Saito's theory of mixed Hodge modules.
\end{abstract}


\section*{Introduction}
Suppose that we are given a complex affine variety $X$ with a section $f \in \Gamma(X, \cO_{X}).$ For any $N \in \N$ we can consider the hypersurface $Y=\{t^{N} -f = 0\} \subset X \times \C$. The variety $Y$ is called the $N$-fold cyclic covering of $D = \{f = 0\},$ and there is finite map $\pi:Y \rightarrow X$ with the commutative diagram
$$\bT Y \ar[rr, hook] \ar[dr, "\pi"] & & X \times \C \ar[dl, "\pi_{1}"]\\
& X & \eT$$
where $\pi_{1}:X \times \C \rightarrow X$ is the natural projection map. This construction can be generalized for any complex algebraic variety $X$. Suppose we are given a line bundle $\cL$ on a complex algebraic variety $X$ such that for some positive integer $N$ there exists a global section $s \in \Gamma(X, \cL^{N})$ that defines an effective Cartier divisor $D$. Given an affine covering $\{X_{\alpha}\}_{\alpha \in I}$ of $X$ such that $\cL|_{X_{\alpha}} \cong \cO_{X_{\alpha}}$ for each $\alpha \in I$, let $\pi_{\alpha}: Y_{\alpha} \rightarrow X_{\alpha}$ be the $N$-fold cyclic covering of the section $s|_{X_{\alpha}}:= s_{\alpha} \in \Gamma(X_{\alpha}, \cO_{X_{\alpha}})$, described previously.  We can construct the $N$-fold cyclic covering of $D$, given by $Y= \displaystyle \Spec\bigg(\bigoplus_{i=0}^{N-1}\cL^{-i}\bigg)$, with a surjective map $\pi: Y \rightarrow X$, by ``gluing" the local data from the maps $\pi_{\alpha}: Y_{\alpha} \rightarrow X_{\alpha}$.  Moreover, there exits a divisor $D':= \pi^{*}(D) \subset Y$ such that the induced map $\pi:D'_{red} \rightarrow D_{red}$ is an isomorphism. Also, if we set $U = X \backslash D$ and $U' = Y \backslash D'$, the induced map $\pi: U' \rightarrow U$ is a finite \'etale map. If $X$ and the divisor $D$ are smooth, then $Y$ is smooth. We are interested in the case where $D$ is singular, which makes $Y$ singular. We study the variety $Y$ using the theory of mixed Hodge modules. The theory of mixed Hodge modules allows us to derive the following decomposition theorem for cyclic coverings.  
\begin{thm}\label{CyclicThm}
     Let $\cL$ be an arbitrary line bundle on an algebraic variety $X$ of dimension $n$. Suppose that for a positive integer $N$ there exists a global section $s \in \Gamma(X, \cL^{N})$
    that defines an effective Cartier divisor $D.$ Let $\pi: Y \rightarrow X$ denote the $N$-fold cyclic covering of $D$.  If $j: U= X \backslash D \hookrightarrow X$ is the natural map, then there exists $\cV \in D^{b}MHM(U)$  such that we have the following isomorphism in the derived category of mixed Hodge modules,
    $$\pi_{+}\Q^{H}_{Y}[n] \simeq \Q^{H}_{X}[n] \oplus j_{!}\cV.$$
     For $0 \leq p \leq n=\dim(X)$, if we apply the functor $Gr^{F}_{-p}DR(\bullet):D^{b}MHM(X) \rightarrow D^{b}_{coh}(\cO_{X})$ to the isomorphism above, we obtain the quasi-isomorphism
    $$\pi_{*}\OmpY  \simeq \underline{\Omega}^{p}_{X} \oplus Gr^{F}_{-p}DR( j_{!}\cV)[p-n].$$
    \end{thm}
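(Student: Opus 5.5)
We will prove the decomposition in three steps: split the pushforward on $U$, show that the complement of $\Q^{H}_{X}[n]$ has no cohomology along $D$, and then apply the graded de Rham functor.

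\textbf{Step 1: the splitting.} Since $\pi$ is finite, $\pi_{+} = \pi_{!}$ is exact for the perverse t-structure. The adjunction unit $\Q^{H}_{X} \to \pi_{*}\pi^{*}\Q^{H}_{X} = \pi_{+}\Q^{H}_{Y}$ gives a morphism $\iota:\Q^{H}_{X}[n] \to \pi_{+}\Q^{H}_{Y}[n]$. For a retraction we use the trace $\mathrm{tr}: \pi_{+}\Q^{H}_{Y}[n] \to \Q^{H}_{X}[n]$. On underlying constructible complexes this is the map $\pi_{*}\Q_{Y} \to \Q_{X}$ that sums over fibers with multiplicity. Since $\pi$ is finite and flat of degree $N$, we have $\mathrm{tr}\circ\iota = N\cdot \mathrm{id}$, and $N$ is invertible over $\Q$. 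Alternatively, we use the $\mu_{N}$-action on $Y$: the invariant part $(\pi_{+}\Q^{H}_{Y})^{\mu_{N}}$ is cut out by the idempotent $\frac1N\sum_{g} g$, and it equals $\Q^{H}_{X}$ because $X = Y/\mu_{N}$. Since $D^{b}MHM(X)$ is Karoubian, this idempotent gives a splitting $\pi_{+}\Q^{H}_{Y}[n] \simeq \Q^{H}_{X}[n] \oplus \mathcal{K}$. Here $\mathcal{K}$ is the image of $1-\frac1N\sum_g g$, which is the sum of the nontrivial eigen-parts for $\mu_N$.

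\textbf{Step 2: $\mathcal{K} \simeq j_{!}\cV$.} Set $\cV := j^{*}\mathcal{K}$. On $U$ the map $\pi$ is finite étale Galois with group $\mu_{N}$, so $\cV$ is the sum of the nontrivial isotypic pieces of the local system $\pi_{*}\Q_{U'}[n]$, viewed as a mixed Hodge module complex. From the triangle $j_{!}j^{*}\mathcal{K} \to \mathcal{K} \to i_{*}i^{*}\mathcal{K} \xrightarrow{+1}$, where $i: D \hookrightarrow X$, it suffices to show $i^{*}\mathcal{K} = 0$. By base change, $i^{*}\pi_{+}\Q^{H}_{Y} = \pi_{+}\Q^{H}_{D'}$. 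Since $\pi: D'_{red} \to D_{red}$ is an isomorphism, and $\Q^{H}$ only depends on the reduced structure, this gives $i^{*}\pi_{+}\Q^{H}_{Y} \simeq \Q^{H}_{D}$. The $\mu_{N}$-action on $D'_{red}$ is trivial, because $t=0$ there. Hence $i^{*}$ of the invariant part is already all of $i^{*}\pi_{+}\Q^{H}_Y$, and $i^{*}\mathcal{K}=0$. It is enough to check this on underlying $\Q$-complexes, since a complex of mixed Hodge modules vanishes exactly when its underlying complex does. This yields $\pi_{+}\Q^{H}_{Y}[n] \simeq \Q^{H}_{X}[n]\oplus j_{!}\cV$.

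\textbf{Step 3: the de Rham statement.} We apply $Gr^{F}_{-p}DR$, which commutes with direct sums. By Saito's result, $Gr^{F}_{-p}DR(\Q^{H}_{X}[n]) \simeq \underline{\Omega}^{p}_{X}[n-p]$, and similarly on $Y$, using $\dim Y = n$. Since $\pi$ is proper, $Gr^{F}DR$ commutes with $\pi_{+}$, giving $Gr^{F}_{-p}DR(\pi_{+}\Q^{H}_{Y}[n]) \simeq \pi_{*}\OmpY[n-p]$, where $\pi_{*} = R\pi_{*}$ because $\pi$ is finite. Shifting by $[p-n]$ gives the stated quasi-isomorphism.

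\textbf{Main obstacle.} The hardest point is constructing the splitting and the $\mu_{N}$-action as morphisms in $D^{b}MHM$, rather than merely on underlying perverse sheaves. This matters because $Y$ may be singular and non-normal. The plan is to use faithfulness of the forgetful functor on $MHM$, together with the fact that $\mathrm{Hom}(\Q^H_X,\pi_+\Q^H_Y)$ is computed by adjunction, so that the automorphisms $g \in \mu_{N}$ of $Y$ act through functoriality of $\pi_{+}$. Idempotents can then be split because $D^{b}MHM(X)$ is Karoubian. A secondary point is that the Du Bois-type identification $Gr^{F}_{-p}DR(\Q^{H}_{X}[n]) \simeq \underline{\Omega}^{p}_{X}[n-p]$ for singular $X$ must be taken as Saito's theorem.
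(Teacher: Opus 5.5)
Your argument works, but it places the splitting differently from the paper.

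\textbf{The paper's route.} The paper never splits over all of $X$. It proves that for a finite \'etale map $f$, the unit $\Q^{H}_{Z}\to f_{+}f^{*}\Q^{H}_{Z}$ and the counit $f_{!}f^{!}\Q^{H}_{Z}\to \Q^{H}_{Z}$ compose to $\deg f$; the counit is available formally because $f^{!}=f^{*}$. It applies this only to $\pi\colon U'\to U$, getting $\pi_{+}\Q^{H}_{Y}[n]|_{U}\simeq \Q^{H}_{U}[n]\oplus\cV$. It then shows the natural map $\Q^{H}_{X}[n]\oplus j_{!}\cV\to\pi_{+}\Q^{H}_{Y}[n]$ is an isomorphism by comparing localization triangles. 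This map is an isomorphism over $U$ by construction, and after $i^{*}$ because $D'_{red}\to D_{red}$ is an isomorphism.

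\textbf{Your route.} You split globally via the $\mu_{N}$-idempotent (or a ramified trace) and then kill $i^{*}\mathcal{K}$. The input along $D$ is the same in both arguments. What your route buys is a canonical description of $j_{!}\cV$ as the non-invariant part of $\pi_{+}\Q^{H}_{Y}[n]$. Its cost is exactly the obstacle you flag: producing and manipulating morphisms in $D^{b}MHM(X)$ across the ramification, which the paper sidesteps.

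\textbf{The justification you need.} Faithfulness of $rat$ on $MHM(X)$ does not settle this by itself. When $X$ is singular, $\Q^{H}_{Y}[n]$ need not lie in the heart, and $rat$ is not faithful on $D^{b}MHM$. The right justification uses adjunction, e.g. $\mathrm{Hom}(\pi_{+}\Q^{H}_{Y},B)=\mathrm{Hom}(\Q^{H}_{pt},a_{Y+}\pi^{!}B)$ and $\mathrm{Hom}(\Q^{H}_{X},B)=\mathrm{Hom}(\Q^{H}_{pt},a_{X+}B)$. By this, every Hom group among $\Q^{H}_{X}$ and $\pi_{+}\Q^{H}_{Y}$ equals $\mathrm{Hom}_{\mathrm{MHS}}$ from $\Q(0)$ into the corresponding topological Hom group. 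No $\mathrm{Ext}^{1}_{\mathrm{MHS}}$ term appears, because $\Q_{X}$ and $\pi_{*}\Q_{Y}$ are sheaves in a single degree and have no negative Ext's between them.

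Hence $rat$ is injective on these groups. Identities such as $g_{\#}h_{\#}=(gh)_{\#}$, $e^{2}=e$ and $e\circ\iota=\iota$ can therefore be checked on sheaves, while the $g_{\#}$ themselves exist by functoriality of $\pi_{+}$. With this, the $\mu_{N}$ route is complete. The trace alternative would additionally require showing that the topological trace is a Hodge class (e.g. by injective restriction to $U$ plus strictness), so you can drop it.
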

    After a brief discussion of mixed Hodge modules in Section \ref{S1}, we give a proof of the main theorem in Section \ref{S2}. For the rest of the paper, we focus on describing the complex $Gr^{F}_{-p}DR( j_{!}\cV)$ given in Theorem \ref{CyclicThm}. 
    
    When $X$ is a smooth irreducible variety, we may describe the complex $Gr^{F}_{-p}DR( j_{!}\cV)$ in terms of log resolutions. Take any strong log resolution $\phi:\tX \rightarrow X$ of the pair $(X, D)$. That is, $\phi:\tX \rightarrow X$ is a projective morphism where $\tX$ is smooth, $E= \phi^{*}D_{red}$  is a simple normal crossing divisor, and the induced map $\phi: \tX \backslash E \rightarrow X \backslash D$ is an isomorphism. We now have the commutative diagram
$$ \bT \tY \arrow[d, "\tilde{\pi}"] \arrow[r]& Y  \ar[d, "\pi"] \\
  \tX \arrow[r, "\phi"] & X \eT$$
  where $\tY = (Y \times_{X}\tX)$. Under this construction, $\tY = \displaystyle \Spec \bigg( \bigoplus_{i=0}^{N-1} \phi^{*}\cL^{-i} \bigg)$ is the $N$-fold cyclic covering of  $\phi^{*}D$. Furthermore, if $\varrho: \tX \backslash E \hookrightarrow \tX$ is the natural inclusion, by proper base change, there is an isomorphism
$$\tilde{\pi}_{+}\Q^{H}_{\tY}[n] \cong \Q^{H}_{\tX}[n] \oplus \varrho_{!}\cV,$$
where we identify $\cV \in HM_{\tX \backslash E}(\tX \backslash E, n)$ via the isomorphism $\phi: \tX \backslash E \rightarrow X \backslash D$. Since the map $\phi: \tX \rightarrow X$ is projective, for $0 \leq p \leq n = \dim(X)$, we have identifications
$$ \bR \phi_{*}Gr^{F}_{-p}DR(\varrho_{!}\cV) \simeq Gr^{F}_{-p}DR(\phi_{*}\varrho_{!}\cV) \simeq Gr^{F}_{-p}DR(j_{!}\cV).
$$
So, the problem of describing the complex $Gr^{F}_{-p}DR( j_{!}\cV)$ simplifies to the case when $D$ has simple normal crossing support, which is thoroughly discussed in Section \ref{S3}. After we cover the case of simple normal crossing support, we prove the following theorem, which not only describes the complex $\pi_{*}\OmpY$, but its dual complex $\cD(\pi_{*}\underline{\Omega}^{p}_{Y})= \bR \cH om_{\cO_{X}}(\pi_{*}\underline{\Omega}^{p}_{Y}, \omega_{X}) \simeq \pi_{*}\bR \cH om_{\cO_{Y}}(\underline{\Omega}^{p}_{Y}, \omega_{Y}) = \pi_{*}\cD(\underline{\Omega}^{p}_{Y})$.
\begin{thm}\label{MainThm2}
    Let $X$ be a smooth irreducible complex variety of dimension $n$. Let $\cL$ be an arbitrary line bundle on $X$, and suppose that for a positive integer $N $ there exists a global section $s \in \Gamma(X, \cL^{N})$
    that defines an effective Cartier divisor $D.$ If $Y \rightarrow X$ is this $N$-fold cyclic covering of $D$ and $\phi:\tX \rightarrow X$ is a strong log resolution of the pair $(X,D)$ with $E= \phi^{*}D_{red}$, then for $1 \leq i \leq N-1$ there exists effective divisors $D^{> 0}_{i}$ and $D^{ \geq0}_{i}$ on $\tX$ such that for $0 \leq p \leq n,$
    $$\pi_{*}\OmpY \simeq \Omega^{p}_{X} \oplus \bigoplus^{N-1}_{i=1} \bR \phi_{*}(\Omega^{p}_{\tX}\log(E)\otimes \cO_{\tX}(D^{> 0}_{i}))\otimes \cL^{-i}.$$
    $$\pi_{*}\cD(\underline{\Omega}^{n-p}_{Y}) \simeq \Omega^{p}_{X} \oplus \bigoplus^{N-1}_{i=1} \bR \phi_{*}(\Omega^{p}_{\tX}\log(E)\otimes \cO_{\tX}(D^{ \geq0}_{i}))\otimes \cL^{-i}.$$ 
\end{thm}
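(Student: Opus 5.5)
Reduce to the case where $D$ has simple normal crossing support, apply Theorem \ref{CyclicThm} there, and push forward along $\phi$.

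\textbf{Splitting $\cV$.} By Theorem \ref{CyclicThm} and the base-change identity $\bR\phi_{*}Gr^{F}_{-p}DR(\varrho_{!}\cV)\simeq Gr^{F}_{-p}DR(j_{!}\cV)$ from the introduction, it suffices to compute $Gr^{F}_{-p}DR(\varrho_{!}\cV)$ on $\tX$. Since $X$ is smooth, $\underline{\Omega}^{p}_{X}=\Omega^{p}_{X}$. On $U=X\backslash D$ the map $\pi:U'\to U$ is a finite étale $\mu_N$-cover, so $\pi_{*}\Q_{U'}$ splits under the $\mu_N$-action into eigensheaves. Hence $\cV=\bigoplus_{i=1}^{N-1}\cV_i[n]$, where $\cV_i$ is the rank-one variation of Hodge structure whose flat bundle is $\cL^{-i}|_U$ with connection determined by $s$. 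Locally, where $\cL$ is trivialized and $s=f$, this is $\cO_U\cdot f^{-i/N}$, with monodromy $e^{2\pi\sqrt{-1}\, i\,\mathrm{ord}_{E_k}/N}$ around the components $E_k$ of $E$. Pulling back, write $\phi^{*}D=\sum_k a_kE_k$, so the local monodromy exponent of $\phi^{*}\cV_i$ along $E_k$ is $\{ia_k/N\}$.

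\textbf{The graded de Rham complexes.} On $\tX$ with SNC divisor $E$, I will use the Deligne extension description of $\varrho_{!}$ and $\varrho_{*}$ of a unipotent-free rank-one VHS, to be established in Section \ref{S3}. The filtered $\cD$-module $\varrho_{*}$ has $Gr^{F}DR$ given by the log de Rham complex of the Deligne extension with eigenvalues in $[0,1)$, and $\varrho_{!}$ uses eigenvalues in $(0,1]$. For $\phi^{*}\cL^{-i}$ with connection $d-\frac{i}{N}\frac{ds}{s}$, the canonical extensions are
\[
\phi^{*}\cL^{-i}\otimes\cO_{\tX}\Big(\sum_k\lfloor ia_k/N\rfloor E_k\Big)\quad\text{and}\quad \phi^{*}\cL^{-i}\otimes\cO_{\tX}\Big(\sum_k\lceil ia_k/N\rceil E_k\Big),
\]
where the second involves $\lceil ia_k/N\rceil-1$ after the shift by $-E$. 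This gives $D^{\geq0}_i=\sum_k\lfloor ia_k/N\rfloor E_k$ and a corresponding $D^{>0}_i$, possibly with a sign convention twist.

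Since the VHS has rank one and is concentrated in a single Hodge degree, $Gr^F_{-p}DR$ collapses to a single term, up to the shift $[p-n]$:
\[
Gr^{F}_{-p}DR(\varrho_{!}\cV_i[n])\simeq\Omega^{p}_{\tX}(\log E)\otimes\cO_{\tX}(D^{>0}_i)\otimes\phi^{*}\cL^{-i}[n-p].
\]
Applying $\bR\phi_{*}$ and the projection formula then yields the first formula.

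\textbf{The dual formula.} Use $\cD(\pi_{*}\underline{\Omega}^{n-p}_{Y})$ together with Saito's duality $\D(Gr^{F}_{-p}DR(M))\simeq Gr^{F}_{p-n}DR(\D M)$ and Grothendieck duality for the proper map $\phi$. The duality exchanges $j_!$ with $j_*$ and $\cV_i$ with $\cV_{N-i}$. Tracking this gives the $\lfloor\cdot\rfloor$ versus $\lceil\cdot\rceil$ swap, and the reindexing $i\leftrightarrow N-i$ must be checked to match the $\cL^{-i}$ twist. The relevant identity is
\[
\Omega^{n-p}(\log E)^{\vee}\otimes\omega\simeq\Omega^{p}(\log E)(-E),
\]
and the line-bundle bookkeeping is $\lceil x\rceil+\lfloor -x\rfloor=0$.

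\textbf{Main obstacle.} I expect the hardest step to be the precise identification of the Hodge filtration on $\varrho_!$ and $\varrho_*$ for the rank-one VHS along a normal crossing divisor. This means getting the correct half-open interval, the correct $F$-index, and the behaviour when $ia_k/N$ is an integer (trivial monodromy, where the $\log E$ versus no-$\log$ distinction appears). I would first work in local coordinates with $f=\prod x_k^{a_k}$ and verify the claim by a Koszul-type computation using Saito's formula for $F_\bullet$ on $\varrho_*$ of a Deligne extension.
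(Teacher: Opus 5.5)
Your proposal is correct. For the first formula it is essentially the paper's argument: pass to $\varrho_{!}\cV$ on $\tX$, split it into rank-one pieces on $\phi^{*}\cL^{-i}$, twist to the lattice with residues in $(0,1]$, apply Corollary \ref{LogCor}, and push forward by Theorem \ref{direct}. The step you flag as the main obstacle is exactly Corollary \ref{LogCor} (Saito's Deligne-extension description), which the paper simply cites. It treats integral residues uniformly, so no separate Koszul computation is needed.

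Two small fixes in that part. First, the pieces $\cV_i$ exist only with $\C$-coefficients, as filtered $D$-modules (the paper's $\cH_i$), not as $\Q$-Hodge modules. This is harmless, because $Gr^{F}DR$ only sees the filtered $D$-module. Second, the connection induced by $d$ on $\cL^{-i}=\cO_X\cdot t^{i}$ is $d+\frac{i}{N}\frac{ds}{s}$, since $d(t^{i})=\frac{i}{N}\,t^{i}\,\frac{ds}{s}$. So the residue along $E_k$ is $+ia_k/N$. With the sign you wrote, the twists would come out negative. The lattices you actually wrote down are the right ones for the correct sign.

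For the second formula you take a genuinely different route. The paper stays on the Hodge-module side. It uses $\D\cV\cong\cV(n)$ to get $\pi_{+}\D(\Q^{H}_{Y}[n])(-n)\simeq\Q^{H}_{X}[n]\oplus j_{+}\cV$, then computes $Gr^{F}_{-p}DR(\varrho_{+}\cV)$ directly from the lattice with residues in $[0,1)$.

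You instead dualize the first formula (with $n-p$ in place of $p$) coherently. You use $\pi_{*}\cD(\underline{\Omega}^{n-p}_{Y})\simeq\cD(\pi_{*}\underline{\Omega}^{n-p}_{Y})$, Grothendieck duality for $\phi$, and $\Omega^{n-p}_{\tX}(\log E)^{\vee}\otimes\omega_{\tX}\cong\Omega^{p}_{\tX}(\log E)(-E)$. The bookkeeping you left open does close:
the $i$-th summand dualizes to $\bR\phi_{*}(\Omega^{p}_{\tX}(\log E)\otimes\cO_{\tX}(-E-D^{>0}_{i}))\otimes\cL^{i}$. Since $\cL^{N}\cong\cO_{X}(D)$, we have $\phi^{*}\cL^{i}\cong\phi^{*}\cL^{-(N-i)}\otimes\cO_{\tX}(\sum_k a_kE_k)$. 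The coefficient of $E_k$ then becomes $a_k-\lceil ia_k/N\rceil=\lfloor (N-i)a_k/N\rfloor$, so you get exactly the $(N-i)$-th summand with $D^{\geq 0}_{N-i}$.

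What each route buys:
your route needs no Hodge-theoretic input beyond the first formula. It uses no polarization, no compatibility of $\D$ with $Gr^{F}DR$, and no $[0,1)$ description of $\varrho_{+}$. It also exhibits the identity $D^{\geq0}_{N-i}=\phi^{*}D-E-D^{>0}_{i}$. The paper's route instead identifies each summand directly with $Gr^{F}_{-p}DR(j_{+}\cH_{i})[p-n]$. That identification is what is used afterwards in Remark \ref{welldefined}, Corollary \ref{Vanishing}, and part (3) of Corollary \ref{singCor}.
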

If $\phi:\tX \rightarrow X$ is a strong log resolution of the pair $(X,D)$ given in the theorem above, with  $\phi^{*}(D)= \displaystyle \sum a_{j}E_{j},$ where $E_{j}$ are the irreducible components of the divisor $E = \phi^{*}D$, then the divisors $D^{> 0}_{i}$ and $D^{ \geq0}_{i}$ are constructed in the following way, 
$$D^{\geq 0}_{i} = \displaystyle \sum \bigg\lfloor \frac{ia_{j}}{N} \bigg \rfloor E_{j}:= \displaystyle
 \bigg\lfloor \frac{i}{N}\phi^{*}D \bigg\rfloor$$
 
 $$D^{> 0}_{i} = \displaystyle \sum b_{j}E_{j} \quad  \text{where   $b_{j} =$}\begin{cases}
    \displaystyle\bigg\lfloor \frac{ia_{j}}{N} \bigg \rfloor & \text{ if  $\displaystyle\frac{ia_{j}}{N} \notin \Z$}\\ \\
     \displaystyle \frac{ia_{j}}{N} -1 & \text{ if  $\displaystyle \frac{ia_{j}}{N} \in \Z$}
 \end{cases}$$
 
 In particular, if $p = 0$ in the Theorem \ref{MainThm2}, we obtain quasi-isomorphisms
 $$\pi_{*}\OmY \simeq \cO_{X} \oplus \bigoplus^{N-1}_{i=1} \bR \phi_{*}( \cO_{\tX}(D^{> 0}_{i}))\otimes \cL^{-i}$$
    $$\pi_{*}\cD(\underline{\Omega}^{n}_{Y}) \simeq \cO_{X} \oplus \bigoplus^{N-1}_{i=1} \bR \phi_{*}( \cO_{\tX}(D^{ \geq0}_{i}))\otimes \cL^{-i}.$$
If $\varphi: \tY \rightarrow Y$ is a resolution of singularities, then $\cD(\underline{\Omega}^{n}_{Y}) \simeq \bR \varphi_{*}\cO_{\tY}.$ Thus, we have quasi-isomorphisms
$$\pi_{*}\bR \varphi_{*}\cO_{\tY} \simeq \pi_{*}\cD(\underline{\Omega}^{n}_{Y}) \simeq \cO_{X} \oplus \bigoplus^{N-1}_{i=1} \bR \phi_{*}( \cO_{\tX}(D^{ \geq0}_{i}))\otimes \cL^{-i}.$$
 Recall that the variety $Y$ is said to have Du Bois singularities if the canonical map $\cO_{Y} \rightarrow \OmY$ is a quasi-isomorphism, and rational singularities if the canonical map $\cO_{Y} \rightarrow \bR \varphi_{*}\cO_{\tY}$ is a quasi-isomorphism. Because the map $\pi:Y \rightarrow X$ is affine, we can say $Y$ has Du Bois singularities if and only if the map
$$\cO_{X} \oplus \displaystyle   \bigoplus_{i =1}^{N-1} \cL^{-i} \cong \pi_{*}\cO_{X} \rightarrow \pi_{*}\OmY \simeq \cO_{X} \oplus \bigoplus^{N-1}_{i=1} \bR \phi_{*}( \cO_{\tX}(D^{> 0}_{i}))\otimes \cL^{-i}$$
is a quasi-isomorphism. This reduces to $Y$ having Du Bois singularities if and only if the natural map
 $$\cO_{X} \rightarrow \bR \phi_{*}( \cO_{\tX}(D^{> 0}_{i}))  \quad \text{is a quasi-isomorphism for $1 \leq i \leq N-1.$}$$
 Similarly, $Y$ has rational singularities if and only if the natural map
 $$\cO_{X} \rightarrow \bR \phi_{*}( \cO_{\tX}(D^{\geq 0}_{i})) \quad \text{is a quasi-isomorphism for $1 \leq i \leq N-1.$}$$
 The divisors $D^{>0}_{i}$ and $D^{\geq 0}_{i}$ are closely related the log canonical threshold of the divisor $D$. For $\lambda \in \Q$, the \emph{multiplier ideal sheaf} associated to the $\Q$-divisor $\lambda \cdot D$ is defined to be
        $$\mathcal{J}(\lambda \cdot D)= \phi_{*}\cO_{\tX}(K_{\tX/X} - \lfloor \lambda \cdot \phi^{*}(D) \rfloor) \subseteq \cO_{X}.$$
        For a closed point $x \in X$, the \emph{log canonical threshold} of the divisor $D$ at $x$ is defined by
        $$\lct(D,x):= \inf \bigg \{ \lambda\in \Q \hspace{.05in}| \hspace{.05in} \mathcal{J}(X, \lambda \cdot D)_{x} \subseteq m_{x} \bigg\},$$
        where $m_{x} \subset \cO_{X}$ is the maximal ideal sheaf of $x.$ In general, the log canonical threshold of $D$ is defined as
        $$\lct(D):= \inf \bigg \{ \lct(D,x) \hspace{.05in}| \hspace{.05in} x \in X\bigg \}$$
        As an application of Theorem \ref{CyclicThm}, we can prove the following relationship between the log canonical threshold of the divisor $D$ and the singularities of the $N$-fold cyclic covering $Y$.
         \begin{thm}\label{lctThm}
           If $Y \rightarrow X$ is this $N$-fold cyclic covering of $D$ and $\phi:\tX \rightarrow X$ is a strong log resolution of the pair $(X,D)$, 
       $$\text{The natural map $\cO_{X} \rightarrow \bR \phi_{*}( \cO_{\tX}(D^{> 0}_{i}))$  is a quasi-isomorphism} \quad \text{if and only if} \quad \lct(D)\geq \dfrac{i}{N}$$
        $$\text{The natural map $\cO_{X} \rightarrow \bR \phi_{*}( \cO_{\tX}(D^{\geq 0}_{i}))$  is a quasi-isomorphism} \quad \text{if and only if} \quad \lct(D)> \dfrac{i}{N}$$
       \end{thm}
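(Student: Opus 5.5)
The plan is to reduce both statements to a single assertion about multiplier ideals, using local vanishing and Grothendieck duality for the projective birational morphism $\phi$. Put $\lambda = i/N$ and write $\phi^{*}D = \sum a_{j}E_{j}$ with all $a_{j} > 0$.

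First I rewrite both divisors as round-downs. By definition $D^{\geq 0}_{i} = \lfloor \lambda \phi^{*}D \rfloor$. For $D^{>0}_{i}$, each coefficient $b_{j}$ equals $\lceil \lambda a_{j} \rceil - 1$. For $0 < \varepsilon \ll 1$ this is $\lfloor (\lambda - \varepsilon) a_{j} \rfloor$, so
$$D^{>0}_{i} = \lfloor (\lambda - \varepsilon)\phi^{*}D \rfloor \quad \text{for all sufficiently small } \varepsilon > 0.$$
Both statements therefore follow from one claim, applied with $\mu = \lambda$ and with $\mu = \lambda - \varepsilon$:
$$\text{for } \mu \in \mathbb{Q}_{\geq 0}, \text{ the natural map } \cO_{X} \to \bR\phi_{*}\cO_{\tX}(\lfloor \mu \phi^{*}D \rfloor) \text{ is a quasi-isomorphism if and only if } \mathcal{J}(\mu \cdot D) = \cO_{X}.$$

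To prove the claim, set $G = \lfloor \mu \phi^{*}D \rfloor$.
\begin{enumerate}
\item Since $X$ is smooth, $\omega_{\tX}(-G) \cong \phi^{*}\omega_{X} \otimes \cO_{\tX}(K_{\tX/X} - G)$.
\item Local vanishing for multiplier ideals (relative Kawamata--Viehweg vanishing for $\phi$) gives $R^{k}\phi_{*}\cO_{\tX}(K_{\tX/X} - G) = 0$ for $k > 0$. Hence $\bR\phi_{*}\omega_{\tX}(-G) \simeq \mathcal{J}(\mu \cdot D) \otimes \omega_{X}$, a single sheaf in degree $0$.
\item Grothendieck duality for the proper map $\phi$ then gives
$$\bR\phi_{*}\cO_{\tX}(G) \simeq \bR\cH om_{\cO_{X}}\big(\bR\phi_{*}\omega_{\tX}(-G), \omega_{X}\big) \simeq \bR\cH om_{\cO_{X}}\big(\mathcal{J}(\mu \cdot D), \cO_{X}\big).$$
\item Under this identification, the natural map $\cO_{X} \to \bR\phi_{*}\cO_{\tX}(G)$, which comes from $\cO_{\tX} \subseteq \cO_{\tX}(G)$ and $\phi_{*}\cO_{\tX} = \cO_{X}$, is the dual of the inclusion $\mathcal{J}(\mu \cdot D) \hookrightarrow \cO_{X}$.
\item By Grothendieck duality on the smooth variety $X$, $\cD$ is an involution on $D^{b}_{coh}(\cO_{X})$. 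So the map in (4) is a quasi-isomorphism if and only if this inclusion is an isomorphism, that is, if and only if $\mathcal{J}(\mu \cdot D) = \cO_{X}$.
\end{enumerate}

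It then remains to translate the condition $\mathcal{J}(\mu \cdot D) = \cO_{X}$ into a statement about $\lct(D)$. On the log resolution, $\mathcal{J}(\mu \cdot D) = \cO_{X}$ near $x$ if and only if $\mu < \lct(D,x)$. This holds because $\lct(D,x) = \min_{j}(k_{j}+1)/a_{j}$ over the $E_{j}$ with $x \in \phi(E_{j})$, where $k_{j}$ is the coefficient of $E_{j}$ in $K_{\tX/X}$, and because multiplier ideals are right-continuous and the triviality condition is open. Globally, $\mathcal{J}(\mu \cdot D) = \cO_{X}$ if and only if $\mu < \lct(D)$; the minimum is attained since only finitely many $E_{j}$ occur.
\begin{itemize}
\item With $\mu = \lambda$ this gives the second equivalence: $\lct(D) > i/N$.
\item With $\mu = \lambda - \varepsilon$, the condition must hold for all small $\varepsilon > 0$ (the divisor $D^{>0}_{i}$ is independent of $\varepsilon$). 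This gives $i/N - \varepsilon < \lct(D)$ for all small $\varepsilon$, that is, $\lct(D) \geq i/N$, which is the first equivalence.
\end{itemize}

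The main obstacle is step (4): checking that the duality isomorphism really carries the natural map $\cO_{X} \to \bR\phi_{*}\cO_{\tX}(G)$ to the dual of $\mathcal{J} \hookrightarrow \cO_{X}$, compatibly with the trace map $\bR\phi_{*}\omega_{\tX} \to \omega_{X}$. I would handle this by factoring the natural map through $\bR\phi_{*}\cO_{\tX} \simeq \cO_{X}$. That quasi-isomorphism is the case $G = 0$ of the claim: $X$ is smooth and so has rational singularities. Dualizing $\omega_{\tX}(-G) \subseteq \omega_{\tX}$ and applying $\bR\phi_{*}$ then identifies the map with the dual of the inclusion, by naturality of duality.
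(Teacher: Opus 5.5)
Your argument is correct. It follows the paper's overall shape: vanishing plus Grothendieck duality, then discrepancies. It organizes the proof differently, however, and handles the converse direction differently.

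\textbf{The paper's route.} The paper first proves that the map is a quasi-isomorphism if and only if $K_{\tX/X}-D^{>0}_{i}$ (resp.\ $K_{\tX/X}-D^{\geq 0}_{i}$) is effective. For the forward direction it combines three ingredients: the Hodge-theoretic vanishing of Corollary~\ref{Vanishing}, the inclusions $\cO_{\tX}\subseteq \cO_{\tX}(K_{\tX/X}-D^{>0}_{i})\subseteq \cO_{\tX}(K_{\tX/X})$, and Grothendieck duality. For the reverse direction it argues by contradiction, using a local-cohomology result from \cite{AH2}. It then compares coefficients, $m_{ij}\leq k_{j}\iff i/N\leq (k_{j}+1)/a_{j}$.

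\textbf{Your route.} You write $D^{\geq 0}_{i}=\lfloor \tfrac{i}{N}\phi^{*}D\rfloor$ and $D^{>0}_{i}=\lfloor (\tfrac{i}{N}-\varepsilon)\phi^{*}D\rfloor$. You then prove the single identity $\bR\phi_{*}\cO_{\tX}(\lfloor \mu\phi^{*}D\rfloor)\simeq \bR\cH om_{\cO_{X}}(\mathcal{J}(\mu\cdot D),\cO_{X})$. Under it, the natural map becomes the dual of the inclusion $\mathcal{J}(\mu\cdot D)\hookrightarrow\cO_{X}$. Since duality reflects isomorphisms, both implications follow at once, with no depth or local-cohomology input.

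\textbf{What each buys.} The local vanishing you import is exactly what Corollary~\ref{Vanishing} provides for these two divisors. You could cite that corollary instead and stay within the paper's Hodge-module framework. The paper's route keeps the vanishing self-contained but pays for it with the separate converse argument.

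Your version also makes the $\geq$ versus $>$ distinction transparent: it is just an $\varepsilon$-shift in the coefficient. In addition, your formula for $\lct(D,x)$ runs over all $E_{j}$, including strict transforms of components of $D$. So it remains correct for non-reduced $D$, e.g.\ $D=2H$ with $H$ smooth, where $\lct(D)=1/2$. By contrast, the paper tacitly assumes $D$ reduced in two places: its appeal to exceptional divisors whose image has codimension at least two, and its formula $\lct(D)=\min\{1,(k_{j}+1)/a_{j}\}$ taken over exceptional $E_{j}$ only.
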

        Assume $D =\{f= 0\}$ is a hypersurface in a smooth affine variety $X$ and let $Y =\{t^{N}-f=0\} \subset X \times \C$ be the $N$-fold cyclic covering of $D$. There exists a polynomial $b(s) \in \C[s]$, and a polynomial $P(s) \in \cD_{X}[s]$ satisfying the relation
        $$P(s)f^{s+1} = b(s) \cdot f^{s}.$$
        The set of all polynomials $b(s)$ forms an ideal in the polynomial ring $\C[s]$, and the monic generator of this ideal, denoted as $b_{f}(s),$ is called the \emph{Bernstein-Sato polynomial} or the $\emph{b-function}$ of $f$. The minimal root of the reduced Bernstein-Sato polynomial $ \tilde{b}_{f}(-s):=\displaystyle \frac{b_{f}(-s)}{s-1}$, which is denoted as $\tilde{\alpha}_{f}$, is called the \emph{minimal exponent of $f$}. The relationship between the minimal exponent of the local defining equation $f$ and the log canonical threshold of $f$ is given by
        $$\lct(f) = \min \bigg \{ \tilde{\alpha}_{f}, 1 \bigg \}.$$ It was shown by Saito \cite{Saito-micr0} that $\tilde{\alpha}_{(t^{N}-f)} = \tilde{\alpha}_{f} + \tilde{\alpha}_{t^{N}} = \tilde{\alpha}_{f} + \dfrac{1}{N}.$ By \cite{Saito-DB}, $Y$ has Du Bois singularities if and only if 
        $$\tilde{\alpha}_{(t^{N}-f)} = \tilde{\alpha}_{f} + \dfrac{1}{N} \geq 1.$$
        By \cite{Saito-Rat}, $Y$ has rational singularities if and only if 
        $$\tilde{\alpha}_{(t^{N}-f)} = \tilde{\alpha}_{f} + \dfrac{1}{N} > 1.$$
        We may recover these results from the theorem above. The variety $Y$ has Du Bois singularities if and only if the natural map $\cO_{X} \rightarrow \bR \phi_{*}( \cO_{\tX}(D^{> 0}_{i}))$  is a quasi-isomorphism for $1 \leq i \leq N-1.$ By the theorem above, $Y$ has Du Bois singularities if and only if 
        $$\tilde{\alpha}_{f} \geq \lct(f) \geq \dfrac{N-1}{N} = 1 - \dfrac{1}{N}.$$
        Similarly, $Y$ has rational singularities if and only if 
        $$\tilde{\alpha}_{f} \geq \lct(f) > \dfrac{N-1}{N} = 1 - \dfrac{1}{N}.$$
        
In Section \ref{S4}, we generalize our approach for singular varieties by using simplicial resolutions. Let $\epsilon: X_{\bullet} \rightarrow X$ be a cubical hyperresolution. That is,  $\epsilon: X_{\bullet} \rightarrow X$ is a simiplical resolution with $X_{i}$ smooth for $i\geq 0$ and $\dim X_{n-i} \leq i$. We may choose such a simplicial resolution by \cite{GNPP}. We set $X_{-1}= X.$ There are proper maps
$$\epsilon_{ij}:X_{i} \rightarrow X_{i-1} \quad \text{for $0 \leq j \leq i$}$$
which satisfy the conditions
$$\epsilon_{ij}\epsilon_{i+1, j+1} = \epsilon_{ij}\epsilon_{i+1,j} \quad \text{for all $j<i.$}$$
By composing any sequence of $(\epsilon_{ij})$, we obtain a well defined morphism
$$\epsilon^{i}:X_{i} \rightarrow X.$$
Now, for each $i$, there is a commutative diagram
$$ \bT Y_{i} \arrow[d, "\pi_{i}"] \arrow[r,"\varepsilon^{i}"]& Y  \ar[d, "\pi"] \\
  X_{i} \arrow[r, "\epsilon^{i}"] & X \eT$$
  where $Y_{i} = (Y \times_{X} X_{i})_{red}$ is the $N$-fold cyclic covering of  $D_{i}=(\epsilon^{i})^{*}D$. Since $X_{i}$ is smooth (not necessarily irreducible), we can apply the results from Section \ref{S3}. For $1 \leq i \leq N-1$ and $0 \leq p \leq n = \dim(X)$, we construct the complex $\underline{\Omega}^{p}_{X}(\log D) (D_{i}^{>0})$, which can be thought of as the ``twisted" $p^{th}$-graded piece of the Du Bois complex of the pair $(X, D)$. With this construction, we obtain our general decomposition for the complex $\pi_{*}\OmpY$.
  \begin{thm}
       Let $\cL$ be an arbitrary line bundle on an algebraic variety $X$ of dimension $n$. Suppose that for a positive integer $N$ there exists a global section $s \in \Gamma(X, \cL^{N})$
    that defines an effective Cartier divisor $D.$ Let $\pi: Y \rightarrow X$ denote the $N$-fold cyclic covering of $D$. For $0 \leq p \leq n$, 
    $$\pi_{*}\underline{\Omega}^{p}_{Y} \simeq \Omp \oplus \bigoplus_{i=1}^{N-1}(\underline{\Omega}^{p}_{X}(\log D) (D_{i}^{>0})) \otimes \cL^{-i}.$$
  \end{thm}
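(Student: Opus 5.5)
The plan is to reduce to the smooth case, Theorem \ref{MainThm2} (or rather its simple normal crossing version from Section \ref{S3}), by cohomological descent along the cubical hyperresolution $\epsilon: X_{\bullet} \rightarrow X$. The complex $\underline{\Omega}^{p}_{X}(\log D)(D_{i}^{>0})$ must first be fixed. After refining the hyperresolution, I may assume each $X_{k}$ is smooth and $D_{k} = (\epsilon^{k})^{*}D$ is either all of a component of $X_{k}$ or has simple normal crossing support there. I then define $\underline{\Omega}^{p}_{X}(\log D)(D_{i}^{>0})$ as $\bR\epsilon_{*}$ of the simplicial complex whose $k$-th term is $\Omega^{p}_{X_{k}}(\log D_{k})\otimes\cO_{X_{k}}(D^{>0}_{k,i})$. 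Here the components of $X_k$ lying inside $D$ contribute zero, since $j_!$ kills them. The differentials come from the pullback maps along the $\epsilon_{kj}$. These maps exist because $D^{>0}_{k,i}$ is formed from the rounding $\lfloor \tfrac{i}{N}(\cdot)\rfloor$ with the integer-point correction, and this rounding is compatible with pullback of log forms twisted by such divisors.

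The key steps, in order, are as follows.

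(1) By the construction of the Du Bois complex, $\underline{\Omega}^{p}_{Y} \simeq \bR\varepsilon_{*}\Omega^{p}_{Y_{\bullet}}$ for any hyperresolution of $Y$. Moreover $\underline{\Omega}^{p}$ satisfies cohomological descent for the proper hypercovering $Y_{\bullet} = (Y\times_X X_\bullet)_{red} \rightarrow Y$, once each $Y_k$ is replaced by its own Du Bois complex, since $Y_k$ is not smooth. This gives $\pi_{*}\underline{\Omega}^{p}_{Y} \simeq \bR\epsilon_{*}\bigl(\pi_{\bullet *}\underline{\Omega}^{p}_{Y_{\bullet}}\bigr)$, using that $\pi$ is finite, so $\pi_*$ commutes with $\bR\epsilon_*$.

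(2) On each smooth $X_{k}$, apply the $N$-fold cyclic cover decomposition from Section \ref{S3}, i.e.\ Theorem \ref{MainThm2} specialized to snc support with $\phi=\mathrm{id}$:
$$\pi_{k*}\underline{\Omega}^{p}_{Y_{k}} \simeq \Omega^{p}_{X_{k}} \oplus \bigoplus_{i=1}^{N-1}\Omega^{p}_{X_{k}}(\log D_{k})(D^{>0}_{k,i})\otimes (\epsilon^{k})^{*}\cL^{-i}.$$

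(3) Check that these splittings are functorial in $k$. Apply $\bR\epsilon_{*}$ and the projection formula for $\cL^{-i}$, and use $\bR\epsilon_{*}\Omega^{p}_{X_\bullet}\simeq\underline{\Omega}^{p}_X$ to obtain the stated decomposition.

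The main obstacle is step (3): a summand-wise isomorphism at each level does not automatically assemble into a map of simplicial objects. I would derive functoriality from Theorem \ref{CyclicThm} rather than from explicit forms. The decomposition $\pi_{+}\Q^H_Y[n]\simeq \Q^H_X[n]\oplus j_!\cV$ comes from the $\mu_N$-action, with $\cV=\bigoplus_i \cV_i$ the nontrivial eigensheaves. This eigen-decomposition is canonical and compatible with base change along every $\epsilon_{kj}$. Applying $Gr^F_{-p}DR$ then identifies each $\chi^{i}$-eigenpiece as a functorial object, namely the Section \ref{S3} computation of $Gr^F_{-p}DR(\varrho_!\cV_i)$ on the snc level.

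A secondary point is checking that $Gr^F_{-p}DR(j_!\cV_i)$ satisfies descent along $X_\bullet$. This should follow from the proper base change $\epsilon^{k*}j_!=j_{k!}\epsilon^{k*}$ together with the fact that $\pi_+$ and $Gr^F DR$ commute with $\epsilon_*$ for proper maps. Granting descent, the eigen-decomposition of $\pi_*\underline{\Omega}^p_Y$ follows directly from Theorem \ref{CyclicThm}, and it remains only to identify each eigenpiece levelwise as in step (2).
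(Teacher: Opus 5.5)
Your skeleton is the paper's: Theorem \ref{CyclicThm}, descent along $X_{\bullet}$, and the levelwise computation on the smooth $X_{k}$ via Corollary \ref{LogCor}. Your explicit definition of $\underline{\Omega}^{p}_{X}(\log D)(D^{>0}_{i})$ through pullback maps is a reasonable way to pin down the paper's iterated cone; those maps do exist, by the rounding inequality you indicate. But both steps that carry the weight have problems.

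First, the splitting $\cV=\bigoplus_{i}\cV_{i}$ into $\chi^{i}$-eigensheaves does not exist in $MHM(U)$ for $N\geq 3$. The characters of $\mu_{N}$ take values in $\Q(\zeta_{N})$, and since $\Q[\mu_{N}]\cong\prod_{d\mid N}\Q(\zeta_{d})$, the $\mu_{N}$-action only splits $\cV$ according to the divisors $d\mid N$. The paper explicitly notes that the individual $\cH_{i}$ in general carry no $\Q$-structure. The eigen-decomposition therefore has to be carried out after applying the $\C$-linear functor $Gr^{F}_{-p}DR$, using the idempotents of $\C[\mu_{N}]$ in $D^{b}_{coh}(\cO_{X})$. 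Alternatively, as the paper does, keep $j_{!}\cV$ whole in $D^{b}MHM(X)$ and separate the $\cL^{-i}$-summands only at the level of the twisted log complexes.

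Second, and more seriously, the descent $Gr^{F}_{-p}DR(j_{!}\cV)\simeq\bR\epsilon_{*}Gr^{F}_{-p}DR(j_{\bullet !}\cV_{\bullet})$, which you ``grant'', is the actual content of the paper's proof, and your justification does not yield it. Proper base change and Theorem \ref{direct} only identify the individual terms, $Gr^{F}_{-p}DR(\epsilon^{k}_{+}j_{k!}\cV_{k})\simeq\bR\epsilon^{k}_{*}Gr^{F}_{-p}DR(j_{k!}\cV_{k})$. They say nothing about $j_{!}\cV$ being recovered from these terms.

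The paper obtains this from Theorem \ref{Q-thm}, which realizes $\Q^{H}_{X}[1]$ as the iterated cone of the $\epsilon^{k}_{+}\Q^{H}_{X_{k}}$ (suitably shifted) inside $D^{b}MHM(X)$. Tensoring with $j_{!}\cV$ and using the projection formula $\epsilon^{k}_{+}\Q^{H}_{X_{k}}\otimes j_{!}\cV\simeq\epsilon^{k}_{+}j_{k!}\cV_{k}$ gives a cone presentation of $j_{!}\cV$. Applying $Gr^{F}_{-p}DR$ and Corollary \ref{LogCor} to it then finishes the proof.

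Your step (1) could replace this and avoid descent in $D^{b}MHM(X)$ altogether. Du Bois descent along $Y_{\bullet}\rightarrow Y$ is classical, and if it is made $\mu_{N}$-equivariant, the $\chi^{i}$-isotypic parts descend. By Section \ref{S3}, each levelwise $\chi^{i}$-part is a locally free sheaf in degree $0$, and it is zero on components of $X_{k}$ lying over $D$. So the transition maps are genuine sheaf maps into torsion-free targets, determined by their restrictions over $U$, where they are the evident pullbacks. This would also remove the obstacle you raise in step (3). As written, however, you drop step (1) in favour of an unproved descent statement, so the argument is incomplete at its central point.
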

A precise definition of the complex $\underline{\Omega}^{p}_{X}(\log D) (D_{i}^{>0})$ is given in Section \ref{S4}.  If we choose $D$ sufficiently general, for example, when $D$ is a sufficiently general section of a basepoint-free linear system, then  we show the quasi-isomorphism above simplifies to
        $$\pi_{*}\underline{\Omega}^{p}_{Y} \simeq \Omp \oplus \bigoplus_{i=1}^{N-1}\underline{\Omega}^{p}_{X}(\log D)\otimes \cL^{-i},$$
        
        where $\underline{\Omega}^{p}_{X}(\log D)$ is the usual $p^{th}$-graded piece of the Du Bois complex of the pair $(X, D)$ \cite[\S 6]{dubois}. This quasi-isomorphism was first shown by Kov\'acs  \cite[Thm. 6.2.(iv)]{kovacsINJ}.

        The techniques for cyclic coverings have been well known to be applied to prove vanishing statements. As an application of approaching cyclic coverings using mixed Hodge modules, in Section \ref{S5}, we prove the following Kawamata-Viehweg type vanishing theorem for projective, not necessarily smooth, complex varieties. 
        \begin{thm}\label{thm5}
    Let $X$ be an irreducible projective variety of dimension $n$, and $\cL$ a big and nef line bundle on $X$.  If $\lcdef(X)=  \max\{\ell \in \N_{0}| \hspace{.01in}^{p} \cH^{-\ell}(\Q_{X}[n])\neq 0\}$ is the local cohomolgical defect of $X$, then
    $$ H^{i}(X,\underline{\Omega}^{0}_{X} \otimes \cL^{-1}) = 0 \quad \text{for $ i< n - \lcdef(X)$.}$$
\end{thm}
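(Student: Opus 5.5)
Since $\cL$ is big and nef, I would first reduce to a cyclic cover situation. Choose $N \gg 0$ and a section $s \in \Gamma(X,\cL^{N})$ defining an effective Cartier divisor $D$. Bigness lets us write $\cL^{N} \cong \cA \otimes \cO_X(F)$ with $\cA$ ample and $F$ effective, and we will aim to choose $s$ so that $D$ is as general as possible. The cleanest way to use the cyclic cover is Kawamata–Viehweg style. Let $\pi: Y \to X$ be the $N$-fold cyclic covering of $D$. By Theorem \ref{CyclicThm} with $p=0$, $\pi_*\OmY \simeq \Om \oplus Gr^F_0DR(j_!\cV)[-n]$. The $\cL^{-1}$-eigenpiece of the $\mu_N$-action on $\pi_*\OmY$ should then be identified with $\Om \otimes \cL^{-1}$ twisted by a correction supported along $D$.

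I would not push that route, though, because a more direct route uses Saito's vanishing for mixed Hodge modules. The relevant statement is: for $M \in MHM(X)$ and $\cL$ ample, $\mathbb{H}^i(X, Gr^F_p DR(M)\otimes \cL^{-1}) = 0$ for $i<0$. The plan is as follows.

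\emph{Step 1 (reduce to bounded-degree pieces).} We have $\Om \simeq Gr^F_0 DR(\Q^H_X[n])[-n]$. The perverse cohomology spectral sheaves ${}^p\cH^{k}(\Q_X[n])$ vanish for $k\notin[-\lcdef(X),0]$. Via the triangles of the perverse truncation, $Gr^F_0DR(\Q^H_X[n])$ is built from $Gr^F_0DR(H^k)[-k]$ with $H^k \in MHM(X)$ and $-\lcdef(X)\le k\le 0$.

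\emph{Step 2 (vanishing for each piece).} For $\cL$ merely big and nef, I need $\mathbb{H}^{j}(X, Gr^F_0DR(H)\otimes\cL^{-1})=0$ for $j<0$ and every $H\in MHM(X)$. This is where the cyclic covering enters. Take $s\in \Gamma(X,\cL^{N})$ with $D$ general in the sense of the paper's Section \ref{S4}; this uses a Kodaira-lemma decomposition after passing to a suitable model. Then the simplified decomposition holds: $\pi_*\underline{\Omega}^p_Y\simeq \Omp\oplus\bigoplus_i \underline{\Omega}^p_X(\log D)\otimes\cL^{-i}$. More to the point, the analogous eigen-decomposition holds at the level of $\pi_+\pi^*H$. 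Since $\pi$ is finite, $H^j(X,\pi_*(\cdot))=H^j(Y,\cdot)$. On $Y$ the pullback of $\cL$ becomes a genuine $N$-th root situation, and one applies the Saito/Kodaira vanishing for ample bundles to $\pi^*\cA$. The $\cL^{-1}$-eigenpiece then inherits the vanishing. The effective part $F$ is absorbed by choosing $N$ large, so the fractional coefficients $\lfloor F/N \rfloor$ vanish.

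\emph{Step 3 (assemble).} Taking hypercohomology through the truncation triangles gives $H^i(X,\Om\otimes\cL^{-1})=0$ whenever $i-n+\lcdef(X)<0$, i.e. $i<n-\lcdef(X)$. The shift bookkeeping is $\Om = Gr^F_0DR(\Q^H_X[n])[-n]$, and the pieces sit in perverse degrees down to $-\lcdef(X)$.

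The main obstacle is Step 2: passing from ample to big and nef for arbitrary mixed Hodge modules on a singular $X$. I would first try a Kodaira decomposition $\cL^{N} = \cA(F)$ followed by a cyclic cover along a general member of $|\cA|$ together with $F$. I would check via Theorem \ref{CyclicThm} that the relevant eigen-summand is $Gr^F_0DR(j_!\cV)$ twisted so that Saito vanishing applies on $Y$. If the $F$-component is badly singular, a resolution and the Theorem \ref{MainThm2} round-down formula should control it.
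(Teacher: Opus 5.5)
Your Steps 1 and 3 are correct bookkeeping. Combined with Saito's vanishing for an \emph{ample} bundle, they are exactly the inductive argument the paper's introduction mentions for the ample case.

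The gap is Step 2. The claimed vanishing $\mathbb{H}^{j}(X,Gr^F_0DR(H)\otimes\cL^{-1})=0$ for $j<0$ and all $H\in MHM(X)$ is false when $\cL$ is only big and nef, because bigness does not pass to supports. Take a smooth curve $C\subset X$ with $\cL|_C\cong\cO_C$ (e.g.\ a curve contracted by $|\cL^{m}|$), and let $H$ be the pushforward of $\Q^H_C[1]$. Then $Gr^F_0DR(H)\simeq\cO_C[1]$, and $\mathbb{H}^{-1}(X,Gr^F_0DR(H)\otimes\cL^{-1})=H^0(C,\cO_C)\neq 0$. Tate twists of $\Q^H_X[n]$ would likewise force Nakano-type vanishing, which fails for big and nef bundles even on smooth $X$.

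The cyclic-cover sketch does not repair this. First, $\pi^*\cL$ is still only big and nef. Second, killing the round-down $\lfloor F/N\rfloor$ says nothing about strata lying inside $F$, and that is exactly where $\cL$ fails to be big: any curve $C$ with $\cL\cdot C=0$ has $F\cdot C<0$, hence $C\subset F$.

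In fact no argument can close this gap, because the statement as written fails. Here is a counterexample.
\begin{itemize}
\item Let $\cE=\cO_{\mathbb{P}^1}(1)^{\oplus 3}\oplus\cO_{\mathbb{P}^1}$, let $p\colon P=\mathbb{P}(\cE)\to\mathbb{P}^1$ be the bundle of rank-one quotients, and let $\xi=\cO_P(1)$.
\item Let $X\subset P$ be cut out by a smooth plane cubic $g(x,y,z)$ in the fibre coordinates of $\cO(1)^{\oplus 3}$; this is a section of $\xi^{3}\otimes p^*\cO(-3)$.
\item $X$ is an irreducible hypersurface, so $\lcdef(X)=0$.
\item Zariski-locally $X$ is a cubic cone times $\mathbb{A}^1$. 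It is singular along the vertex section $\Sigma$, with transversal simple elliptic (hence Du Bois) singularities, so $\Om\simeq\cO_X$.
\item The bundle $\cL=\xi|_X$ is nef, with $\cL^3=6$ and $\cL|_\Sigma\cong\cO_\Sigma$.
\item Adjunction gives $\omega_{X/\mathbb{P}^1}\cong\cL^{-1}$, and the fibres are cubic surfaces. Hence $R^qp_*\cL^{-1}=0$ for $q\neq 2$ and $R^2p_*\cL^{-1}\cong\cO_{\mathbb{P}^1}$.
\item Therefore $H^2(X,\Om\otimes\cL^{-1})\cong\C$, even though $2<n-\lcdef(X)=3$.
\end{itemize}

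The paper's own route differs from yours. It takes a cyclic cover branched along $D+H_N\in|\cL^{N}|$, so that $\cV$ lives on the affine set $U_N=X\setminus(D+H_N)$. It then uses Artin vanishing for $H^\bullet_c(U_N,rat(\cV))$, the Hodge splitting, and $\lcdef(U_N)\le\lcdef(X)$. This avoids splitting $\Q^H_X[n]$, but your supports problem resurfaces in its final identification with $\Om$. On hyperresolution components mapping into $D$ (here, everything over $\Sigma\subset D$), the pullback of $j_!\cV$ is zero, as in the example after Proposition \ref{DB-prop}. So for $N\gg 0$ the complex obtained is $\bR\phi_*\cO_{\tX}$ for a resolution $\phi\colon\tX\to X$, not $\Om$, and the argument only recovers Kawamata--Viehweg on $\tX$.

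What your Steps 1 and 3 genuinely prove is the ample case. A big-and-nef version needs an additional hypothesis on $\cL$ along the singular strata.
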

If $X$ is a smooth, irreducible projective variety, then $\Om \simeq \cO_{X}$ and $\lcdef(X) = 0$. Thus, the theorem above extends the well-known vanishing theorem for big and nef line bundles for smooth projective varieties \cite{kawamata} \cite{viehweg}. Note that if $\cL$ were replaced with an ample line bundle in Theorem \ref{thm5}, then one may apply the Kodaira-Saito vanishing theorem \cite{saito2} inductively to prove
    $$ H^{i}(X,\underline{\Omega}^{0}_{X} \otimes \cL^{-1}) = 0 \quad \text{for $ i< n - \lcdef(X)$.}$$
We would also like to note that Kawamata-Viehweg type vanishing results have also been given by Popa \cite{popa}, Wu \cite{Wu}, and Suh \cite{suh}. In \cite{popa}, Popa gives a vanishing theorem for mixed Hodge modules on smooth projective varieties, with restrictions on the base locus of divisors. While both \cite{Wu} and  \cite{suh} give a  Kawamata-Viehweg type vanishing theorem for pure Hodge modules.

\subsection*{Acknowledgments}
I want to thank Donu Arapura,  Mircea Musta\c{t}\u{a},  Wanchun Shen, Sridhar Venkatesh, and Anh Duc Vo for answering my questions while writing this paper, and a special thanks to Lauren\c{t}iu Maxim for all the valuable discussions.

\section{Preliminaries}\label{S1}

A variety will always mean a reduced separated scheme of finite type over $\C.$ We may also take the induced analytic structure and work in the analytic category. Also, a divisor will always mean a Cartier divisor.

Let $X$ be a smooth irreducible variety of dimension $n$. Roughly speaking, a mixed Hodge module $\cM \in MHM(X)$ on $X$ consists of a right regular holonomic $D_{X}$-module $M$ with a good filtration  $F_{\bullet}M$. The filtration $F_{\bullet}M$ is called the Hodge filtration. If one would prefer to work with left $D_{X}$-modules, then we have
$$M^{left} = M \otimes \omega^{-1}_{X}$$
$$F_{\bullet}M^{left} = F_{\bullet -n}M \otimes\omega^{-1}_{X}$$
where $\omega_{X} = \Omega^{n}_{X}$ is the canonical bundle. There is a weight filtration $W_{\bullet}\cM$, where $Gr^{W}_{i}\cM$ is a pure Hodge module of weight $i$ on $X$. There is also a perverse sheaf $K = rat(\cM)$ with $\Q$-coefficients on $X$  such that 
$$ K \otimes \C \simeq DR(\cM): = [M \otimes \wedge^{n} \Theta_{\cX} \rightarrow M \otimes \wedge^{n-1}\Theta_{X} \rightarrow \cdots \rightarrow M][n],$$ 
where $\Theta_{X}$ is the sheaf of vector fields on $X$. This complex is called the de Rham complex of the $D_{X}$-module $M$, and this complex has a filtration induced from the Hodge filtration on $M$,
$$F_{\bullet}DR(\cM): = [F_{\bullet -n}M \otimes \wedge^{n} \Theta_{X} \rightarrow F_{\bullet -n +1}M \otimes \wedge^{n-1}\Theta_{X} \rightarrow \cdots \rightarrow F_{\bullet}M][n],$$

The first condition that is imposed on the category of Hodge modules of weight $w$ on $X$, which is denoted as $HM(X,w)$, is the condition of strict support. For any $\cM \in HM(X,w),$ there is a decomposition
$$ \cM = \bigoplus_{Z} \cM_{Z} \quad \text{for $\cM_{Z} \in HM_{Z}(X,w)$},$$
where $Z \subseteq X$ is a closed irreducible subariety of $X$ and $\cM_{Z}$ is a Hodge module with strick support on $Z$. The theorem below gives a precise characterization of the subcategory $HM_{Z}(X,w)$.
\begin{thm}\cite[Theorem 1.3]{saito4} For any closed irreducible subvariety $Z \subseteq X$, the restriction to sufficiently small open subvarieties of $Z$ induces an equivalence of categories
$$MH_{Z}(X,w) \simeq VHS_{gen}(Z, w - dim(Z))^{p},$$
the right-hand side is the category of polarizable variations of pure Hodge structure of weight $w - dim(Z)$ defined on a smooth, dense open subvariety $U$ of $Z$. Moreover, this equivalence of categories induces a one-to-one correspondence between polarizations of $\cM \in HM_{Z}(X,w)$ and those of the corresponding generic variation of Hodge structure.
\end{thm}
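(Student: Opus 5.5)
The plan is to show that the functor $\cM \mapsto \cM|_{U}$ is well defined, fully faithful, and essentially surjective. Here $U \subseteq Z$ is a smooth dense open set on which $\cM$ is smooth. The first two properties should come from the axioms of strict support. The third is an extension problem, which I would reduce to the normal crossing case via resolution of singularities. Throughout, I would use Saito's inductive definition of $HM(X,w)$ through nearby and vanishing cycles along locally defined functions.

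\emph{Step 1: the functor.} Given $\cM \in HM_{Z}(X,w)$, the underlying perverse sheaf $rat(\cM)$ has strict support $Z$, so on a Zariski dense open $U \subseteq Z_{reg}$ it is a shifted local system $L[\dim Z]$. By Kashiwara's equivalence, $M|_{U}$ is a flat bundle. The Hodge filtration $F_{\bullet}$ gives a filtration by subbundles satisfying Griffiths transversality, after shrinking $U$ so the graded pieces are locally free. The polarization on $\cM$ restricts to a polarization of weight $w-\dim Z$, because the shift by $\dim Z$ accounts for the weight shift. Shrinking $U$ is harmless, since we work with generic variations.

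\emph{Step 2: full faithfulness.} Let $u:\cM \to \cM'$ be a morphism in $HM_{Z}(X,w)$. Its kernel and cokernel are again objects of $HM(X,w)$, using strictness and the fact that $HM(X,w)$ is abelian and semisimple via polarization. If $u|_{U}=0$, then $\im(u)$ is supported on $Z \backslash U$, which contradicts strict support unless $\im(u)=0$. This gives faithfulness. For fullness, I would use that $rat(\cM) = IC_{Z}(L)$ is an intermediate extension. A morphism of local systems therefore extends uniquely to $rat$. Its compatibility with $F$ then follows from the characterization of $F$ along $Z \backslash U$ by the $V$-filtration, $F_{p}M = \sum F_{i}\partial_{t}^{p-i}(V^{>0}\cap j_{*}F_{i}M|_{U})$.

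\emph{Step 3: essential surjectivity, the main obstacle.} Given a polarizable VHS $\mathbb{V}$ on $U$, I must build a pure Hodge module on $X$ extending it. First, I would take a resolution $\mu:\tilde Z \to Z$ that is an isomorphism over $U$ (shrinking $U$ if needed), with $\tilde Z \backslash U$ a simple normal crossing divisor, and pass to the unipotent case by a finite cover. On $\tilde Z$, I would form the Deligne extension and the minimal extension $\tilde{M}$, with Hodge filtration as above. Schmid's nilpotent orbit and $SL_2$-orbit theorems, in their several-variable form due to Cattani--Kaplan--Schmid and Kashiwara's work on the relative monodromy filtration, then verify the inductive conditions for each local coordinate function: quasi-unipotence and regularity, strict specializability, and that $\mathrm{Gr}^{W}\psi$ is polarized by the monodromy weight filtration. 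This shows $\tilde{\cM} \in HM_{\tilde Z}(\tilde Z, w)$.

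Next, I would push forward by the projective map $\mu$. By the decomposition theorem for pure Hodge modules (Saito's stability under projective direct images), $\mathcal{H}^{0}\mu_{*}\tilde{\cM}$ is pure of weight $w$ and splits by strict support. Its component with strict support $Z$ is the desired $\cM$, and it restricts to $\mathbb{V}$ on $U$. The polarization correspondence follows from Step 2 applied to the isomorphism $\mathbb{V} \otimes \mathbb{V} \to \Q(-w')$.

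The hard part is the local analysis near the normal crossing divisor: that the limiting mixed Hodge structures are polarized by the relative monodromy filtrations. I would simply import this from Cattani--Kaplan--Schmid and Kashiwara rather than reprove it. The remaining difficulty is the dependence of Saito's framework on the decomposition theorem, which is established simultaneously by induction on dimension.
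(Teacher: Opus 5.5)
The paper gives no proof of this statement. It simply quotes Saito's structure theorem, so the comparison is with Saito's own argument. Your outline reproduces its architecture: the restriction functor, full faithfulness from strict support and the $V$-filtration description of $F$, and essential surjectivity via resolution to the normal crossing case, Cattani--Kaplan--Schmid and Kashiwara--Kawai, and stability under projective direct images.

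\textbf{Main gap: Step 3 checks only coordinate functions.} You verify the nearby-cycle conditions only along the local coordinate functions and then conclude $\tilde{\cM}\in HM_{\tilde Z}(\tilde Z,w)$. Saito's definition requires, for \emph{every} locally defined function $g$: quasi-unipotence and regularity along $g$, strict specializability of $F$, and the inductive conditions on $\mathrm{Gr}^W\psi_g$ and $\mathrm{Gr}^W\phi_{g,1}$ with their induced polarizations. Your later use of the decomposition theorem for $\mu$ is only available once $\tilde{\cM}$ is a polarizable Hodge module in this full sense. The several-variable asymptotic theory controls $\psi_g$ when $g$ is a monomial $\prod x_i^{m_i}$ in the boundary coordinates, for instance through $W(\sum m_iN_i)$. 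An arbitrary $g$ must be reduced to that case, as follows:
\begin{enumerate}
\item take a further embedded resolution $\rho$ making $g\circ\rho$ a monomial with respect to a normal crossing divisor containing the boundary;
\item compute $\psi_g$ and $\phi_{g,1}$ of the extension through $\rho_*\psi_{g\rho}$ and $\rho_*\phi_{g\rho,1}$ of the extension upstairs;
\item apply the direct image theorem to the lower-dimensional objects $\mathrm{Gr}^W\psi_{g\rho}$.
\end{enumerate}
This reduction is exactly where the simultaneous induction on dimension that you mention at the end does its work. It is the missing idea, not a formality.

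\textbf{Second gap: quasi-unipotence is assumed, not derived.} Passing ``to the unipotent case by a finite cover'', and listing quasi-unipotence among the conditions to verify, presupposes that the local monodromies of the variation are quasi-unipotent. This is not automatic over $\Q$, since Borel's argument needs a $\Z$-structure. For example, take the rank two local system on $\C^{*}$ with monodromy $\left(\begin{smallmatrix}3/5&-4/5\\4/5&3/5\end{smallmatrix}\right)$, Hodge type $(0,0)$, and polarization $x_1y_1+x_2y_2$. It is a polarized $\Q$-VHS of weight $0$ whose monodromy eigenvalues $(3\pm 4i)/5$ are not roots of unity. Since Saito's Hodge modules are quasi-unipotent along every function by definition, it has no extension in $HM_{\C}(\C,1)$. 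So quasi-unipotent local monodromy must be built into $VHS_{gen}$, as it is in Saito's setting; the statement as quoted here omits it. You should assume it explicitly.

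\textbf{Two thinner steps.}
\begin{itemize}
\item In Step 1, the fact that the fibres of $\cM|_U$ carry polarized Hodge structures of weight $w-\dim Z$ does not follow from Griffiths transversality. It comes from iterating $\psi$ along coordinate functions down to points; there $N=0$, so $\psi$ is just restriction.
\item For the polarization correspondence, Step 2 only extends a polarization $S$ of the variation to a morphism $\cM\to\D\cM(-w)$. Its positivity along all $g$ has to come from running the Step 3 construction with $S$ as the chosen polarization, and then invoking uniqueness.
\end{itemize}
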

Now consider a divisor $E \subset X$ such that $E_{red}$ has simple normal crossing support and $\mathfrak{j}: V = X \backslash E \hookrightarrow X$ is the natural map. Any polarized variation of Hodge structure $\cH = ((\cE, F_{\bullet}\cE), L)$ of weight $w-n$ on $V$ may be regarded as a pure Hodge module of weight $w$ on $V$ by the theorem above.  For a variation of Hodge structure $\cH = ((\cE, F_{\bullet}\cE), L)$, $\cE$ is the underlying vector bundle, with increasing filtration $F_{\bullet}\cE$ (keeping with Saito's notation), and $L$ is the local system over $\Q$ such that $\cE = \cO_{V} \otimes_{\Q} L.$

There are three natural extensions of $\cH$ to $X,$
$$\mathfrak{j}_{!}\cH \in MHM(X), \quad \mathfrak{j}_{+}\cH \in MHM(X)$$
$$IC_{X}(\cH) = \im \bigg[\mathfrak{j}_{!}\cH\rightarrow \mathfrak{j}_{+}\cH \bigg] \in HM_{X}(X, w).$$
These mixed Hodge modules are defined so that we have
$$DR(\mathfrak{j}_{+(!)}\cH) \simeq \mathfrak{j}_{*(!)}L \otimes \C$$
$$DR(IC^{H}_{X}(\cH)) \simeq IC_{X}(L)\otimes \C.$$
The Hodge filtrations are given by
$$F_{p}(\mathfrak{j}_{+ (!)}\cH) = \displaystyle \sum (\omega_{X} \otimes F_{i}\tilde{\cE}^{\geq -1(>-1)})F_{p-i}D_{X}$$
$$F_{p}(IC^{H}_{X}(\cH)) = \displaystyle \sum (\omega_{X} \otimes F_{i}\tilde{\cE}^{ > -1})F_{p-i}D_{X} \quad \text{in $IC^{H}_{X}(\cH),$}$$
where $\tilde{\cE}^{\geq 0}$ is the lattice of Deligne's regular singular meromorphic extension whose eigenvalues of $res \nabla$ along the irreducible components of $E$ are contained in $[0,1)$. Similarly, $\tilde{\cE}^{> 0}$ is the lattice of Deligne's regular singular meromorphic extension whose eigenvalues of $res \nabla$ along the irreducible components of $E$ are contained in $(0,1]$. These lattices have a natural filtration given by $$F_{\bullet}\tilde{\cE}^{\geq 0 (>1)} = \tilde{\cE}^{\geq 0 (>1)} \cap \mathfrak{j}_{*}F_{\bullet}\cE.$$
An important property of the extension $\mathfrak{j}_{+(!)}\cH$ is given in the proposition below, which was shown by Saito.
\begin{prop}\cite[Prop. 3.11]{saito2}
    There is a quasi-isomorphism of filtered differential complexes
    $$DR(\mathfrak{j}_{+}\cH) \simeq \bigg( (\Omega^{\bullet}_{X}(\log E), F_{\bullet}) \otimes (\tilde{\cE}^{\geq 0}, F_{\bullet}) \bigg)[n]$$
    $$DR(\mathfrak{j}_{!}\cH) \simeq \bigg( (\Omega^{\bullet}_{X}(\log E), F_{\bullet}) \otimes (\tilde{\cE}^{> 0}, F_{\bullet}) \bigg)[n],$$
    where the filtration on $\Omega^{\bullet}_{X}(\log E)$ is given by the ``stupid" filtration. 
\end{prop}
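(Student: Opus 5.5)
The plan is to reduce to a local statement and then make an explicit filtered comparison along the $V$-filtration. Both sides are filtered differential complexes on $X$, and the question is local. So I would work in a coordinate polydisc $\Delta^{n}$ with coordinates $x_{1},\dots,x_{n}$ in which $E=\{x_{1}\cdots x_{k}=0\}$. There the local system $L$ is given by commuting monodromies $T_{1},\dots,T_{k}$. I would write $M=\mathfrak{j}_{+}\cH$ as the right $D_{X}$-module generated by $\omega_{X}\otimes\tilde{\cE}^{>-1}$ inside $\omega_{X}\otimes\mathfrak{j}_{*}\cE$, which is Deligne's meromorphic extension $\tilde{\cE}(*E)$. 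Its Hodge filtration is the one recalled above, $F_{p}M=\sum(\omega_{X}\otimes F_{i}\tilde{\cE}^{\geq -1})F_{p-i}D_{X}$.

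The natural morphism is the inclusion of the log de Rham complex
$$\big(\Omega^{\bullet}_{X}(\log E)\otimes\tilde{\cE}^{\geq 0},F\big)[n]\hookrightarrow \big(\Omega^{\bullet}_{X}\otimes\tilde{\cE}(*E),F\big)[n]\simeq DR(\mathfrak{j}_{+}\cH),$$
where the right-hand side is written in left-module notation and its filtration is $F_{p-\bullet}$ shifted as in the definition of $F_{\bullet}DR$. I would check directly that the stupid filtration on $\Omega^{\bullet}_{X}(\log E)$ combined with $F_{\bullet}\tilde{\cE}^{\geq 0}$ maps into $F_{\bullet}DR$. The main task is then to show that the cokernel is filtered acyclic, i.e. that each $Gr^{F}_{p}$ of the cokernel is acyclic.

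For this I would filter by the multi-indexed $V$-filtration along $x_{1},\dots,x_{k}$. Here $V^{\alpha}=\tilde{\cE}^{\geq\alpha}$ in multi-index notation, and $\tilde{\cE}(*E)=\bigcup_{m}x^{-m}\tilde{\cE}^{\geq 0}$. The de Rham complex is a Koszul complex in the operators $x_{i}\partial_{i}$, acting on the log forms, tensored with the Koszul complex in $x_{i}$ for $i>k$. So the comparison reduces, essentially by a Künneth argument in each normal direction, to a one-variable statement for each $i\le k$. Consider the step $x_{i}^{-m}\tilde{\cE}^{\geq 0}\to x_{i}^{-m-1}\tilde{\cE}^{\geq 0}$. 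On the graded quotient, the operator $x_{i}\partial_{i}$ acts as residue$\,-\,(m+1)$, and this has eigenvalues in $[-m-1,-m)$, which are nonzero for $m\ge 0$. Hence multiplication by $x_{i}\partial_{i}$ is bijective on these graded pieces, and every extra step of the pole filtration contributes an acyclic Koszul complex. For $\mathfrak{j}_{!}$ I would run the same argument with $\tilde{\cE}^{>0}$ and the $D_{X}$-module generated by $\tilde{\cE}^{>-1}$. In that case one inverts $\partial_{i}\colon Gr_{V}^{\alpha}\to Gr_{V}^{\alpha-1}$ for $\alpha>0$ rather than $x_{i}\partial_{i}$. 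Alternatively, the $\mathfrak{j}_{!}$ case follows from the $\mathfrak{j}_{+}$ case by duality $\D\,\mathfrak{j}_{+}=\mathfrak{j}_{!}\D$, combined with Grothendieck duality for log forms, using $\Omega^{n}_{X}(\log E)(-E)=\omega_{X}$ and the fact that $\tilde{\cE}^{>0}$ is dual to $\tilde{\cE}^{\geq 0}$.

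The main obstacle is strictness with respect to $F$: the invertibility of $x_{i}\partial_{i}$ must hold on $Gr^{F}$ and not just on the underlying modules. In other words, I need
$$F_{p}\big(x_{i}^{-m-1}\tilde{\cE}^{\geq 0}\big)=x_{i}\partial_{i}\,F_{p-1}\big(x_{i}^{-m}\tilde{\cE}^{\geq 0}\big)+F_{p}\big(x_{i}^{-m}\tilde{\cE}^{\geq 0}\big).$$
This identity is precisely the content of the formula for $F_{p}\mathfrak{j}_{+}\cH$ stated above, together with the fact that the $F_{i}\tilde{\cE}^{\geq\alpha}$ are subbundles compatible with the $V$-filtration. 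That compatibility is where the nilpotent orbit theorem and the several-variable $SL_{2}$-orbit theorem of Cattani–Kaplan–Schmid enter. I would take that compatibility as the input, namely Saito's description of the Hodge filtration on the extensions. With it, the graded Koszul complexes are acyclic term by term, which gives the filtered quasi-isomorphism.
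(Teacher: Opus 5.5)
The paper gives no proof of this statement; it only cites Saito. Your overall strategy is the standard one: reduce to a local statement, compare the log complex with $DR$ through the pole-order/$V$-filtration, and take the compatibility of $F$ with the multi-$V$-filtration as input. The unfiltered comparison for $\mathfrak{j}_+$ is fine. The filtered step, which is the real content, is set up wrongly.

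First, the identity you display cannot hold. $x_i\partial_i$ preserves $x_i^{-m}\tilde{\cE}^{\geq 0}$, so the right-hand side lies in $x_i^{-m}\tilde{\cE}^{\geq 0}$. The left-hand side is all of $x_i^{-m-1}\tilde{\cE}^{\geq 0}$ for $p\gg 0$ (locally), since $F$ is exhaustive.

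More importantly, nonvanishing residue eigenvalues cannot give $Gr^F$-acyclicity. On $Gr_V^{\beta}$ one has $x_i\partial_i=\beta+N_i$, and the scalar $\beta$ maps $F_p$ into $F_p$, so it drops out of the induced map $Gr^F_p\to Gr^F_{p+1}$. With the filtration your identity uses (the $dx_i/x_i$-coefficients filtered by $F_\bullet M$ itself, shifted by one), the graded Koszul differential is therefore nilpotent, and the cokernel is not $Gr^F$-acyclic.

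The correct bookkeeping is this. $F_\bullet DR$ puts $x_iF_{p+1}M$ on the $dx_i/x_i$-coefficients. Equivalently, work with $\partial_i$: the cokernel is built from pieces $Gr_V^{\beta}\xrightarrow{\partial_i}Gr_V^{\beta-1}$ with $\beta<0$. What you need is that $\partial_i\colon Gr^F_pGr_V^{\beta}\to Gr^F_{p+1}Gr_V^{\beta-1}$ is bijective for $\beta<0$.
\begin{itemize}
\item Injectivity is where the eigenvalues enter: $x_i\partial_i$ is invertible on $Gr_V^\beta$ for $\beta\neq 0$.
\item Surjectivity means $F_{p+1}V^{\beta-1}=\partial_iF_pV^{\beta}+F_{p+1}V^{>\beta-1}$ for $\beta<0$. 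This is what the formula $F_pM=\sum_k F_kD_X\cdot F_{p-k}\tilde{\cE}^{\geq -1}$ (left-module notation) gives, once $F$ is known to be compatible with $V$.
\end{itemize}

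Second, your direct $\mathfrak{j}_!$ argument is run on the wrong module. The $D_X$-submodule of $\tilde{\cE}(*E)$ generated by $\tilde{\cE}^{>-1}$ is the intermediate extension; this is the formula the paper records for $F_\bullet IC^H_X(\cH)$. Its de Rham complex is $IC_X(L)$, not $\mathfrak{j}_!L$. (You also wrote $\tilde{\cE}^{>-1}$ as the generator of $\mathfrak{j}_+\cH$, which in fact needs $\tilde{\cE}^{\geq -1}$.) For trivial $\cH$ this module is $\cO_X$, while $\tilde{\cE}^{>0}=\cO_X(-E)$, and the cokernel of $\Omega^\bullet_X(\log E)(-E)\hookrightarrow\Omega^\bullet_X$ computes $\C_E\neq 0$.

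The module underlying $\mathfrak{j}_!\cH$ is $D_X\otimes_{V^0D_X}\tilde{\cE}^{>-1}$, which does not embed in $\tilde{\cE}(*E)$. There one must invert $\partial_i$ on $Gr_V^{\alpha}$ for $\alpha\le 0$, not $\alpha>0$. The essential case is $\alpha=0$, where $\partial_i=\mathrm{can}$ is bijective exactly for $\mathfrak{j}_!$.

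Your duality alternative is viable, but it needs two unstated inputs. You must invoke Saito's compatibility of $DR$ with duality for filtered differential complexes. You must also check that the Hodge filtrations on $\tilde{\cE}^{\geq 0}$ and on $(\tilde{\cE}^{\geq 0})^{\vee}(-E)=\widetilde{\cE^{\vee}}^{>0}$ are dual to one another. As written, that route is only a pointer.
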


\begin{cor}\label{LogCor}
If the Hodge filtration for $\cH$ is trivial, 
$$F_{i}\cE =\begin{cases}
 \cE & \text{for $i\geq 0$}\\ \\
0 & \text{otherwise}
\end{cases}$$
 then we have the quasi-isomorphisms
$$Gr^{F}_{-p}DR(\mathfrak{j}_{+}\cH) \simeq (\Omega^{p}_{X}(\log E) \otimes \tilde{\cE}^{\geq 0})[n-p]$$
$$Gr^{F}_{-p}DR(\mathfrak{j}_{!}\cH) \simeq (\Omega^{p}_{X}(\log E) \otimes \tilde{\cE}^{> 0})[n-p].$$
\end{cor}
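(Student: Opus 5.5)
The plan is to deduce Corollary \ref{LogCor} directly from Saito's filtered quasi-isomorphism \cite[Prop. 3.11]{saito2} by taking $Gr^{F}_{-p}$ of both sides. Since these are filtered quasi-isomorphisms of filtered differential complexes, $Gr^{F}$ of them are quasi-isomorphisms of complexes of $\cO_X$-modules. So it suffices to compute the graded pieces of the right-hand sides.

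First, I would determine the filtration on the Deligne lattices. The hypothesis says $F_i\cE=\cE$ for $i\ge 0$ and $0$ otherwise. Then $F_{i}\tilde{\cE}^{\geq 0(>0)}=\tilde{\cE}^{\geq 0(>0)}\cap \mathfrak{j}_{*}F_{i}\cE$ equals the whole lattice for $i\ge 0$ and $0$ for $i<0$. Hence the graded pieces are concentrated in degree $0$: $Gr^F_0\tilde{\cE}=\tilde{\cE}$ and $Gr^F_i\tilde{\cE}=0$ for $i\neq 0$.

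Next, I would compute the tensor-product filtration. The stupid filtration on $\Omega^{\bullet}_{X}(\log E)$ is normalized in Saito's convention so that $Gr^F_{-p}$ of the log de Rham complex is $\Omega^p_X(\log E)[-p]$; explicitly, $F_{-p}$ consists of the terms $\Omega^{q}_X(\log E)$ with $q\ge p$. The tensor filtration is
$$F_k=\sum_{a+b=k}F_a\Omega^\bullet_X(\log E)\otimes F_b\tilde{\cE}.$$
Since $F_b\tilde{\cE}$ jumps only at $b=0$, this gives $F_k=F_k\Omega^\bullet_X(\log E)\otimes\tilde{\cE}$. Therefore
$$Gr^F_{-p}\big(\Omega^\bullet_X(\log E)\otimes\tilde{\cE}\big)=\Omega^p_X(\log E)\otimes\tilde{\cE}$$
sitting in cohomological degree $p$.

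In this graded piece the differential $\nabla$ of the log de Rham complex with coefficients in $\tilde{\cE}$ becomes zero, because it raises $q$. This is exactly the Griffiths transversality statement: $\nabla F_{b}\subset \Omega^1(\log E)\otimes F_{b+1}$ combined with the shift in $F$ on forms. One only has to check that, with the given conventions, $\nabla$ maps $F_k$ into $F_k$ and maps $F_k$ into $F_{k-1}$ modulo $F_{k-1}$, so it vanishes on $Gr^F$. The precise bookkeeping of this index convention is the one point needing care.

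Finally, the shift $[n]$ turns degree $p$ into degree $p-n$, which yields $(\Omega^{p}_{X}(\log E)\otimes\tilde{\cE}^{\geq 0})[n-p]$ for $\mathfrak{j}_{+}\cH$. Replacing $\tilde{\cE}^{\geq 0}$ by $\tilde{\cE}^{>0}$ gives the $\mathfrak{j}_{!}$ version by the identical argument.

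The main obstacle is purely the matching of indexing conventions: the increasing Hodge filtration, the sign in $Gr^F_{-p}$, and the normalization of the stupid filtration all have to line up so that the surviving term is exactly $\Omega^p$ and the shift is exactly $[n-p]$. I would fix these by checking against the case $E=\emptyset$, $\cH=\Q^H_X[n]$, where $Gr^F_{-p}DR(\cO_X)\simeq\Omega^p_X[n-p]$ is standard.
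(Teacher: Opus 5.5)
Your proposal is correct and is the same argument the paper relies on: the paper gives no separate proof of Corollary~\ref{LogCor} and simply reads it off from Saito's Prop.~3.11 by applying $Gr^{F}_{-p}$ to the filtered quasi-isomorphism. With the trivial filtration on $\tilde{\cE}$, your bookkeeping correctly shows that $Gr^{F}_{-p}$ is the single term $\Omega^{p}_{X}(\log E)\otimes\tilde{\cE}$ in position $p$, shifted by $[n]$; the appeal to Griffiths transversality is unnecessary, since a complex concentrated in one degree automatically has zero differential.
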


Suppose $Z$ is a singular variety and $Z$ embeds into the smooth variety $X$. If $$i:Z \hookrightarrow X$$
 is the natural inclusion map, then we have an equivalence of categories:
$$ \bT i_{+}: D^{b}MHM(Z) \arrow[r] & D^{b}MHM_{Z}(X), 
\eT$$
where $D^{b}MHM_{Z}(X)$ is the full subcategory of $D^{b}MHM(X)$ whose objects have cohomological supports in $Z$ \cite{saito2}. For any $\cN \in D^{b}MHM(Z)$, the complex $Gr^{F}_{p}DR(i_{+}\cN) \in D^{b}_{coh}(\cO_{X})$ is a well defined complex in $D^{b}_{coh}(\cO_Z),$ and is independent of the embedding $i:Z \hookrightarrow X$. In general, we can always take an affine covering $\{Z_{\alpha}\}_{\alpha \in I}$ of $Z$ and take a closed embeddig $i_{\alpha +}:Z_{\alpha} \hookrightarrow X_{\alpha}$ and study mixed Hodge modules locally on $Z$. 
\begin{thm}\label{Saito'sMainThm}\cite{saito2}
    We have natural functors $f_{+}, f_{!}, f^{*}, f^{!}, \psi_{g}, \phi_{g,1}, \D, \boxtimes, \otimes$ and $\cH om$ between $D^{b}MHM(Z)$ such that these functors are compatible with the corresponding functors on the underlying $\Q$-complexes via:
    $$rat: D^{b}MHM(Z) \rightarrow D^{b}Per(\Q_{Z}) \rightarrow D^{b}_{c}(Z)$$
    where $f$ is a morphism of algebraic varieties, $g \in \Gamma(X, \cO_{Z})$ and $\phi_{g,1} = Ker(T_{s}-1)$ with $T_{s}$ the semi-simple part of the monodromy $T$ of $\phi_{g}.$
\end{thm}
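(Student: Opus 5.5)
The plan is to follow Saito's construction in \cite{saito2}. First build each functor on $D^{b}MHM$ of a smooth variety. Then transport it to a singular $Z$ using the equivalence $i_{+}:D^{b}MHM(Z)\simeq D^{b}MHM_{Z}(X)$ for local closed embeddings $Z_{\alpha}\hookrightarrow X_{\alpha}$. Independence of the embedding and gluing follow because everything is defined compatibly with closed embeddings. Throughout, an object is a triple consisting of a filtered regular holonomic $D$-module, a $\Q$-perverse sheaf $K$, and a comparison isomorphism $DR(M)\simeq K\otimes\C$, together with a weight filtration. The functor $rat$ simply remembers $K$, so compatibility with $rat$ reduces to checking that each construction on the $D$-module side matches the topological one under the de Rham functor.

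The key steps, in order, are as follows.

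\textbf{Nearby and vanishing cycles.} For $g\in\Gamma(X,\cO_{X})$, use the graph embedding and the Kashiwara--Malgrange $V$-filtration along $t=g$ to define $\psi_{g}$ and $\phi_{g,1}$ on filtered $D$-modules. Here $F$ is shifted on $Gr_{V}$, and the weight filtration is the relative monodromy filtration of $N=\log T_{u}$. The Riemann--Hilbert correspondence $DR\circ Gr_{V}\simeq{}^{p}\psi,{}^{p}\phi$ gives compatibility with $rat$. Saito's inductive definition of $MHM$ requires exactly that these objects are again mixed Hodge modules, so this step holds essentially by definition.

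\textbf{Direct images.} For a projective morphism $f$, define $f_{+}$ on filtered $D$-modules. The main input is Saito's stability theorem: the cohomologies ${}^{p}\cH^{i}f_{+}$ of a polarizable pure Hodge module are pure, strictness of the Hodge filtration holds, and the hard Lefschetz and decomposition theorems follow. This is proved by induction on dimension of support, using the $\psi,\phi$ formalism above and the theory of polarized variations near normal crossings.

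For a general morphism $f$, factor it as an open embedding followed by a projective morphism using Nagata compactification. It then remains to construct $j_{+}$ and $j_{!}$ for an open embedding $j$ whose complement is a divisor $\{g=0\}$. Use the formulas recalled before Cor.~\ref{LogCor} locally, or equivalently glue $\psi_{g}$ and $\phi_{g,1}$ via Beilinson's construction. Independence of the factorization is checked on the underlying perverse sheaves, because $rat$ is faithful. Passage to the derived category uses Beilinson's theorem $D^{b}MHM(X)\simeq$ derived category of the abelian category, together with Čech complexes for affine covers.

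\textbf{Remaining functors.} Define $\D$ by filtered duality of $D$-modules, combined with Verdier duality on $K$; the polarization gives the identification on pure objects. Set $f_{!}=\D f_{+}\D$. Define $f^{*}$ and $f^{!}$ as adjoints: for closed embeddings via $j_{!}j^{*}\to\mathrm{id}\to i_{+}i^{*}$ triangles, and for smooth morphisms by a shifted pullback of filtered $D$-modules. Define $\boxtimes$ by the external product of filtered $D$-modules, which is compatible with $\psi$ by the Thom--Sebastiani-free case. Finally set $\otimes=\Delta^{*}(\bullet\boxtimes\bullet)$ and $\cH om=\D(\bullet\otimes\D\bullet)$.

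The main obstacle is the stability of pure polarizable Hodge modules under projective direct image, which is the decomposition theorem in this setting, together with the requirement that $j_{+}$ land in $MHM$. The latter needs the $SL_{2}$-orbit and nilpotent orbit theorems of Cattani--Kaplan--Schmid and Kashiwara's work on admissible variations to control the Hodge filtration on the $V$-graded pieces. I would first establish the curve case by these asymptotic results. I would then reduce the general case to it by restricting to generic curves through the degeneracy locus, which is how the strictness of $F$ along $V$ is proved.
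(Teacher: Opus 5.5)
The paper does not prove this statement. It is Saito's foundational theorem, quoted from \cite{saito2} and used afterwards purely as a black box, so the only ``proof'' to compare with is the citation. Your outline is a roadmap of that same source, which is the right thing to rely on: the inductive definition through $\psi_{g}$ and $\phi_{g,1}$, stability under projective direct images, localization along divisors, \v{C}ech complexes, $f_{!}=\D f_{+}\D$, $\otimes=\Delta^{*}(\bullet\boxtimes\bullet)$ and $\cH om=\D(\bullet\otimes\D\bullet)$. Read as a proof, however, it compresses two long papers into a page, and several steps are stated in a form that would not work if you tried to carry them out.

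\textbf{The role of $rat$.} It is faithful on the abelian category $MHM(X)$ but not on $D^{b}MHM(X)$. For instance $\mathrm{Hom}(\Q^{H}_{pt},\Q^{H}_{pt}(1)[1])=\mathrm{Ext}^{1}_{MHS}(\Q,\Q(1))\neq 0$, while the underlying $\mathrm{Hom}(\Q,\Q[1])$ vanishes. So independence of the compactification or factorization cannot be ``checked on the underlying perverse sheaves.'' You must first construct canonical comparison morphisms at the level of complexes. Only then can $rat$, being conservative, detect that they are isomorphisms; this is how the paper itself uses $rat$ in the proof of Theorem \ref{Q-thm}. For the same reason, $i^{*}$ and $i^{!}$ for a closed embedding should be defined by functorial \v{C}ech complexes built from localizations along local defining equations. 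Cones of $j_{!}j^{*}\to\mathrm{id}$ are not functorial.

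\textbf{Derived-category inputs.} The statement ``$D^{b}MHM(X)\simeq$ derived category of the abelian category'' is just the definition. The Beilinson theorem actually needed is $D^{b}Per(\Q_{Z})\simeq D^{b}_{c}(Z)$, which is what lets the displayed $rat$ be compared with the classical functors. You also need the equivalence $i_{+}:D^{b}MHM(Z)\simeq D^{b}MHM_{Z}(X)$, which your first paragraph takes for granted.

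\textbf{Localization along divisors.} The formulas before Corollary \ref{LogCor} describe $\mathfrak{j}_{+}$ and $\mathfrak{j}_{!}$ only for a variation with normal-crossing boundary. For a general $\cM\in MHM(U)$ with $X\backslash U=\{g=0\}$, the extension is defined through the $V$-filtration along $g$ on the graph embedding. That the result is again a mixed Hodge module is essentially built into the algebraic definition, via admissibility along $g$ and extendability.

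\textbf{Where the analytic input enters.} It does not enter as in your last paragraph. Restriction to generic curves through the degeneracy locus is in general characteristic and does not control the $V$-filtration along $g$, so it cannot give strictness of $F$ along $V$, which is itself part of the definition. The several-variable results of Cattani--Kaplan--Schmid and Kashiwara--Kawai, and Kashiwara's admissibility in the mixed case, are used to show that variations with normal-crossing boundary give (mixed) Hodge modules. The general structure theorem then follows by resolution of singularities and stability under projective direct images.

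\textbf{External product.} Compatibility of $\boxtimes$ with $\psi_{g}$ for $g$ pulled back from one factor does not by itself verify the defining condition on $X\times X'$. That condition involves arbitrary local functions on the product.
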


\begin{thm}\label{direct}\cite[Thm. 0.1]{saito5}
Let $f: Z \rightarrow Z'$ be a proper map between algebraic varieties. For every $p \in \Z$, one has a natural isomorphism of functors
$$\bR f_{\ast} \circ Gr^{F}_{p}DR(\bullet) \simeq Gr^{F}_{p}DR \circ f_{+}(\bullet),$$
as functors from $D^{b}(MHM(Z))$ to  $D^{b}_{coh}(\cO_{Z'})$. 
\end{thm}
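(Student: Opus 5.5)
The statement is Saito's theorem \cite[Thm.~0.1]{saito5}, and I would prove it by reducing to the case of a morphism between smooth varieties. There the claim becomes an identity of filtered differential complexes, and the only real input is Saito's strictness theorem.

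\textbf{Step 1 (reduction to smooth ambient spaces).} Since $f$ is proper, the question is local on $Z'$, so I may assume $Z'$ is affine. Then $Z'$ has a closed embedding $Z' \hookrightarrow X'$ into a smooth $X'$. By Chow's lemma and compactification, together with the fact that $\Gr^{F}_{p}DR$ on singular spaces is defined through local embeddings and does not depend on them, I would factor $f$ locally on $Z$ as a closed embedding $Z \hookrightarrow X' \times \mathbb{P}^{m}$ followed by the projection $X' \times \mathbb{P}^{m} \to X'$. When $Z$ is not globally embeddable, the gluing has to be done in the derived category using Čech/Mayer--Vietoris type arguments, or else by working with Saito's filtered differential complexes, which glue by construction. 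For closed embeddings the statement is a tautology: $i_{+}$ does not change the underlying filtered $\cD$-module except by the standard graph-embedding twist, and $\Gr^{F}_{p}DR(i_{+}\cN)$ is by definition the complex used to define $\Gr^{F}_{p}DR(\cN)$. So it suffices to treat a proper morphism $f\colon X \to X'$ of smooth varieties applied to objects of $D^{b}MHM(X)$.

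\textbf{Step 2 (filtered de Rham complex commutes with direct image).} For smooth $X, X'$, Saito defines $f_{+}$ on filtered right $\cD$-modules by
\[
f_{+}(M,F) = \bR f_{*}\bigl((M,F)\otimes^{L}_{\cD_X}(\cD_{X\to X'},F)\bigr).
\]
Equivalently, $f_{+}$ is $\bR f_{*}$ applied to the filtered differential complex $DR(M,F)$, followed by the equivalence between filtered differential complexes and filtered $\cD$-module complexes (the functor $DR^{-1}$). This yields a natural filtered quasi-isomorphism
\[
DR(f_{+}(M,F)) \simeq \bR f_{*}\, DR(M,F)
\]
in the derived category of filtered differential complexes. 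Taking $\Gr^{F}_{p}$ commutes with $\bR f_{*}$ at this level, because $\Gr^{F}_{p}$ of a filtered differential complex is a complex of $\cO$-modules with $\cO$-linear differentials. This gives $\Gr^{F}_{p}DR(f_{+}M) \simeq \bR f_{*}\Gr^{F}_{p}DR(M)$, naturally in $M$.

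\textbf{Step 3 (main obstacle: strictness).} The identity above is a statement about the filtered complex $f_{+}(M,F)$. The functor $f_{+}$ on $D^{b}MHM$, however, is defined by taking cohomology modules $H^{k}f_{+}M$ with their induced filtrations. To identify the two one needs the direct image complex to be \emph{strict}: each $H^{k}$ must carry a filtration for which $\Gr^{F}$ commutes with taking cohomology. This is exactly Saito's decomposition/strictness theorem for projective morphisms, proved via polarizable Hodge modules and the relative hard Lefschetz theorem. It extends to $D^{b}MHM$ using the weight filtration, each $\Gr^{W}$ being polarizable.

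For a general proper morphism, I would first use Chow's lemma to dominate $f$ by a projective one. I would then argue by induction on dimension, using the distinguished triangles coming from localization along the exceptional locus. Because $\Gr^{F}_{p}DR$ is a triangulated functor on $D^{b}MHM$, checking the natural isomorphism on the generators of $D^{b}MHM$ then concludes the argument.
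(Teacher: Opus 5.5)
The paper does not prove this statement; it is quoted from Saito, so your sketch has to stand on its own. Its formal core is right. For filtered differential complexes, $DR(f_{+}(M,F))\simeq \bR f_{*}DR(M,F)$ is essentially the definition of the direct image, and $Gr^{F}_{p}$ commutes with $\bR f_{*}$ at that level. For a \emph{projective} $f$, factored globally over $Z'$ as a closed embedding into $Z'\times\mathbb{P}^{m}$ followed by the projection, the non-formal input is indeed Saito's strictness theorem. Your Step 1, however, only works in that projective situation. A factorization ``locally on $Z$'' does not compute $f_{+}$. A \v{C}ech/Mayer--Vietoris gluing on the source replaces $f$ by the non-proper maps $f|_{U_{I}}$, for which the statement is false: $Gr^{F}_{p}DR$ does not commute with $\bR j_{*}$ for an open embedding $j$ (compare $\Omega^{p}_{X}(\log D)$ with $\bR j_{*}\Omega^{p}_{U}$).

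The genuine gap is the passage from projective to proper, and more basically the question of where the natural transformation comes from. A d\'evissage by Chow's lemma, localization triangles $i_{+}i^{!}M\to M\to j_{+}j^{*}M\xrightarrow{+1}$ and induction on dimension can only prove that an \emph{already constructed} natural transformation $\bR f_{*}\circ Gr^{F}_{p}DR\to Gr^{F}_{p}DR\circ f_{+}$ is an isomorphism; it cannot produce one.

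Concretely, let $g\colon\tilde Z\to Z$ be the Chow modification, an isomorphism over $W=Z\setminus T$. You do get an isomorphism for $j_{+}j^{*}M\cong g_{+}\tilde j_{+}\tilde j^{*}g^{*}M$ from the projective maps $g$ and $f\circ g$, and one for $i_{+}i^{!}M$ by induction on $T$. But these come from unrelated constructions, so nothing guarantees that they commute with the connecting morphism $j_{+}j^{*}M\to i_{+}i^{!}M[1]$. Without that you cannot even conclude that the two sides are isomorphic for $M$. In any case the fill-in of a morphism of triangles is not canonical, so naturality in $M$ is lost. Naturality is part of the statement and is used in the paper, e.g.\ when $Gr^{F}_{p}DR$ is applied to the iterated cones of the maps $\epsilon^{i}_{+}\Q^{H}_{X_{i}}\to\epsilon^{i+1}_{+}\Q^{H}_{X_{i+1}}$ in Section 4.

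The missing ingredient is a realization functor $D^{b}MHM(Z)\to D^{b}F(\cO_{Z},\mathrm{Diff})$ into Saito's derived category of filtered differential complexes. It must be defined for arbitrary, possibly non-embeddable, $Z$ and commute with $f_{+}$ for every proper $f$. With such a functor the theorem is just your Step 2 followed by $Gr^{F}_{p}$.

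The same point is what your Step 3 actually needs in order to pass from single modules to complexes. The weight filtration reduces strictness of $H^{k}f_{+}$ of a mixed module to the pure case, but it does not by itself show that $f_{+}$ on $D^{b}MHM$ lifts the filtered direct image.
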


\begin{prop}[Proper Base Change]\cite[(4.4.3)]{saito2}
Let
$$\bT Z' \arrow[d,"f'"] \arrow[r, "g"] & Z \arrow[d, "f"]\\
Y' \arrow[r, "g'"] & Y \eT$$
be a cartesian diagram of algebraic varieties. Then 
$$(g')^{*}f_{!} \simeq f^{'}_{!}g^{*} \quad \text{in $D^{b}MHM(Y')$}$$
and is compatible with proper base change in $D^{b}_{c}(Y')$
\end{prop}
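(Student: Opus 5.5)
The plan is to build a natural base change morphism inside $D^{b}MHM(Y')$ out of adjunctions, and then prove it is an isomorphism by passing to underlying $\Q$-complexes, where the classical proper base change theorem applies.

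The detection step rests on a conservativity remark. The functor $rat: D^{b}MHM(Y') \rightarrow D^{b}_{c}(Y')$ is exact and faithful on $MHM(Y')$, and a mixed Hodge module $\cM$ with $rat(\cM)=0$ is zero. Indeed, its underlying perverse sheaf vanishes, so its underlying $D$-module vanishes, so its weight graded pieces vanish; by \cite[Theorem 1.3]{saito4} these are generically variations of Hodge structure. It follows that a morphism $u$ in $D^{b}MHM(Y')$ with $rat(u)$ an isomorphism has a cone $C$ all of whose cohomology modules $H^{k}C$ satisfy $rat(H^{k}C)={}^{p}\cH^{k}(rat\, C)=0$. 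Hence $C=0$ and $u$ is an isomorphism.

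To construct the morphism, I will use Nagata compactification to factor $f=\bar f\circ j$ with $j$ an open immersion and $\bar f$ proper, so that $f_{!}=\bar f_{*}j_{!}$ (Saito's construction of $f_{!}$ is compatible with such factorizations). Taking the fibre product of the whole factorization with $g'$ gives a factorization $f'=\bar f'\circ j'$ of the base-changed map. It therefore suffices to treat the two types of maps separately and then compose the resulting isomorphisms.

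\emph{Proper $\bar f$.} Here $\bar f_{!}=\bar f_{*}$. The base change morphism $(g')^{*}\bar f_{*}\rightarrow \bar f'_{*}g^{*}$ is the adjoint, under $(g')^{*}\dashv g'_{*}$, of the composite $\bar f_{*}\rightarrow \bar f_{*}g_{*}g^{*}\simeq g'_{*}\bar f'_{*}g^{*}$. This composite uses the unit of $g^{*}\dashv g_{*}$ and the equality $\bar f\circ g = g'\circ \bar f'$ (writing $g$ for the top arrow of this square).

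\emph{Open immersion $j$.} The morphism $j'_{!}g^{*}\rightarrow (g')^{*}j_{!}$ is the adjoint, under $j'_{!}\dashv j'^{*}$, of the isomorphism $g^{*}\simeq g^{*}j^{*}j_{!}\simeq j'^{*}(g')^{*}j_{!}$. This uses $j^{*}j_{!}=\mathrm{id}$ and the equality $j\circ g=g'\circ j'$ (again with $g$ the top arrow of this square).

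By Theorem \ref{Saito'sMainThm}, $rat$ commutes with all the functors involved and with their adjunction units and counits. Hence $rat$ carries each of these morphisms to the corresponding topological base change morphism. That morphism is an isomorphism: by classical proper base change in the first case, and in the second case by checking on stalks, both sides being extension by zero off an open set. The conservativity remark then gives that the morphisms in $D^{b}MHM(Y')$ are isomorphisms, and compatibility with proper base change in $D^{b}_{c}(Y')$ holds by construction.

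I expect the main obstacle to be the compatibility claim itself: that the adjunctions in $D^{b}MHM$, in particular $(g')^{*}\dashv g'_{*}$ and $j_{!}\dashv j^{*}$, have units and counits lifting the topological ones. Without this, $rat$ of the constructed map might not be the classical base change map. To handle it, I would first reduce to $g'$ a closed immersion or a smooth morphism, by factoring $g'$ through its graph into $Y'\times Y$ followed by a projection. In those cases Saito defines $g'^{*}$ explicitly via $\psi,\phi$ functors or via a shift of the smooth pullback, and the adjunction maps can be checked directly at the level of perverse sheaves. The same reduction also shows that the resulting isomorphism does not depend on the chosen compactification.
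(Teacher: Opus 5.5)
The paper gives no argument here. The proposition is quoted from Saito \cite[(4.4.3)]{saito2} and used as a black box throughout.

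\emph{Your route.} You reconstruct a proof along standard lines. You build the base change map from adjunctions, split $f$ by Nagata into an open immersion followed by a proper map, and base-change the whole factorization. You then certify invertibility after applying $rat$, using that $rat$ reflects isomorphisms on $D^{b}MHM$. This is the same principle the paper uses at the end of the proof of Theorem \ref{Q-thm}. The outline is correct. What it buys over the bare citation is that it isolates the one non-formal input, and you locate it correctly.

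\emph{The missing input.} Theorem \ref{Saito'sMainThm} as quoted only says that $rat$ intertwines the functors. It says nothing about $rat$ sending the units and counits of $(g')^{*}\dashv g'_{*}$, $\bar g^{*}\dashv \bar g_{*}$ and $j'_{!}\dashv j'^{*}$ to the topological ones. Here $\bar g\colon \bar Z'\to \bar Z$ is the base change of $g'$ to the compactification, the arrow you actually pull back along in both auxiliary squares. So your sentence attributing this compatibility to that theorem overstates it. Without the compatibility, $rat$ of your composite is just some morphism between complexes that happen to be abstractly isomorphic, and nothing forces it to be invertible. This input therefore has to be taken from Saito's construction of the adjunctions. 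Alternatively, your sketched reduction (graph of $g'$, then closed immersion and smooth projection) has to be carried out on Saito's explicit models. It cannot be done ``at the level of perverse sheaves'', since equality of two morphisms in $D^{b}_{c}$ is not detected on perverse cohomology. Either way the argument still rests on Saito's foundations, just on a smaller piece of them.

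\emph{Independence of the compactification.} $rat$ is faithful on $MHM(Y')$ but not on $D^{b}MHM(Y')$. On a point, $\mathrm{Hom}(\Q^{H}_{pt},\Q^{H}_{pt}(1)[1])=\mathrm{Ext}^{1}_{MHS}(\Q(0),\Q(1))\neq 0$, while its image under $rat$ is $0$. Hence $rat$ can show that your map is an isomorphism lifting the classical one, but it cannot give the independence of the compactification asserted in your last sentence. That independence is not part of the statement anyway.
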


\section{Proof Of The Decomposition Theorem}\label{S2}
  Before proving the theorem in the general case, let's begin with a smooth quasi-projective variety $X$ of dimension $n$ and $\cA$ an ample line bundle on $X$. We will follow the construction of cyclic coverings given by \cite{EsnaultViehweg}. For sufficiently large $N$, we have a smooth divisor $H$ such that $\cO_{\cX}(H) = \cA^{N}$ and there exists a section $s \in H^{0}(X, \cA^{N})$ such that
$$(s) + N \cdot A = H$$ 
where $A$ is a divisor such that $\cA = \cO_{X}(A)$. The normalization $\cY$ of $X$ in the field extension $\C(X)(\sqrt[N]{s})$ is the $N$-fold cyclic covering of $X$. Let $\Pi: \cY \rightarrow X$ denote the surjective map of $\cY$ onto $X$. Since $H$ is smooth, $\cY$ is also smooth. If $H'= \Pi^{-1}(H)_{red}$, then the induced map $\Pi: H' \rightarrow H$ is an isomorphism.  There is also a natural decomposition of the structure sheaf,
$$\Pi_{*}\cO_{\cY} \cong \cO_{X} \oplus \bigoplus_{i=1}^{N-1}\cA^{-i}.$$ 
Choose local coordinates $x_{1}, x_{2}, \cdots, x_{m}$ for $X$ such that $H$ is defined by $x_{1} = 0$.  Then
$$y_{1} = \sqrt[N]{x_{1}} \quad \text{and} \quad  \text{$y_{i}= x_{i}$ for $i \geq 2$}$$
are local coordinates for $\cY.$ 
In these local coordinates, 
$$\Pi_{*}\cO_{\cY} \cong \cO_{X} \oplus \bigoplus_{i=1}^{N-1}(\cO_{X} \cdot y_{1}^{i}).$$
The differential map $d: \cO_{\cY} \rightarrow \Omega^{1}_{\cY}$ induces a logarithmic connection
$$\nabla_{i}: \cA^{-i} \rightarrow \Omega^{1}_{X}(\log H) \otimes \cA^{-i}$$
$$\nabla_{i}(gy^{i}) = (dg + \frac{i}{N}g \frac{dx_{1}}{x_{1}})y^{i} \quad \text{for $g \in \cO_{X}.$}$$
 If $V = X \backslash H$, then restricting $\nabla_{i}$ to $V$ is a connection on $V$. In local coordinates, the underlying locally constant sheaf of the connection, in $\C$ coefficients, is given by
$$\bigg \{ g \in \cO^{an}_{V}| \hspace{.05in}\frac{\partial g}{\partial x_{1}} = -\frac{i}{N} \frac{g}{x_{1}}, \quad \frac{\partial g}{ \partial x_{i}}=0 \quad \text{for $i>1$} \bigg \}= \C \cdot \frac{1}{\sqrt[N]{x^{i}}}.$$
We can also describe the cyclic covering in terms of mixed Hodge modules. If $V' = \cY \backslash H'$, then the induced map $\Pi: V' \rightarrow V$ is finite \'etale. If we consider the constant variation of Hodge structure $\Q^{H}_{V'}[n] \in HM_{V'}(V',n)$, then 
$$\Pi_{+}\Q^{H}_{V'}[n] \cong \Q^{H}_{V}[n] \oplus \tilde{\cV}$$
where $\tilde{\cV} \in HM_{V}(V,n)$ is a variation of Hodge structure. The underlying locally free sheaf of $\tilde{\cV}$ is given by 
$$\mathcal{E} = \displaystyle \bigoplus_{i=1}^{N-1} \cA^{-i}\vert_{V} \cong  \bigoplus_{i=1}^{N-1}\cO_{V},$$
and the connection 
$$\nabla: \mathcal{E} \rightarrow \Omega^{1}_{V} \otimes \mathcal{E},$$ 
is determined by the connections $\nabla_{i}$ described above. Note that for $\Pi_{+}\Q^{H}_{V'}[n]$ to have a rational structure, we generally cannot consider the connections $\nabla_{i}$ individually. Also, since the Hodge filtration for $\Q^{H}_{V'}[m]$ is trivial, the Hodge filtration for $\tilde{\cV}$ is also trivial. 

If we consider the $N$-fold cyclic covering $\Pi: \cY \rightarrow X$, then we have the following decomposition,
$$\Pi_{+}\Q^{H}_{\cY}[n] \cong \Q^{H}_{X}[n] \oplus IC_{X}(\tilde{\cV}),$$
where $IC_{X}(\tilde{\cV}) \in HM_{X}(X,n)$ is the unique extension of the variation of Hodge structure $\tilde{\cV}.$ Let $\mathfrak{j}: V \hookrightarrow X$, $\rho: V' \hookrightarrow \cY$, $\mathfrak{i}: H \hookrightarrow X$ and $\iota: H' \hookrightarrow \cY$ denote the inclusion maps. Then we have the morphism of triangles in $MHM(X)$,
$$\bT \Pi_{+}\rho_{!}\Q^{H}_{V'}[n] \ar[r] & \Pi_{+}\Q^{H}_{\cY}[n] \ar[r] & \Pi_{+}(\iota_{+}\Q^{H}_{H'}[n-1])[1] \arrow[r, "+1"]  & \hfill\\
\mathfrak{j}_{!}\Q^{H}_{V}[n] \ar[r] \ar[u] & \Q^{H}_{X}[n] \ar[r] \ar[u] & (\mathfrak{i}_{+}\Q^{H}_{H}[n-1])[1] \arrow[r, "+1"] \ar[u, "\rotatebox{90}{$\sim$}"] & \hfill \eT$$
From our discussion, the previous diagram can be written as
$$\bT \mathfrak{j}_{!}\Q^{H}_{V}[n] \oplus \displaystyle \mathfrak{j}_{!}\tilde{\cV} \ar[r] & \Q^{H}_{X}[n] \oplus \displaystyle IC_{X}(\cV) \ar[r] & \Pi_{+}(\iota_{+}\Q^{H}_{H'}[n-1])[1] \arrow[r, "+1"]  & \hfill\\
\mathfrak{j}_{!}\Q^{H}_{V}[n] \ar[r] \ar[u] & \Q^{H}_{X}[n] \ar[r] \ar[u] & (\mathfrak{i}_{+}\Q^{H}_{H}[n-1])[1] \arrow[r, "+1"] \ar[u, "\rotatebox{90}{$\sim$}"] & \hfill \eT$$
We see that there is an identification $\mathfrak{j}_{!}\tilde{\cV} \cong IC_{\cX}(\tilde{\cV}).$
Thus,
 $$\Pi_{+}\Q^{H}_{\cY}[n] \cong \Q^{H}_{X}[n] \oplus \mathfrak{j}_{!}\tilde{\cV}.$$
 Another way to see that there is an identification  $\mathfrak{j}_{!}\tilde{\cV}\cong IC_{\cX}(\tilde{\cV}),$ the locally free sheaf $\cA^{-i}$ is a lattice of $\mathfrak{j}_{*}(\cA^{-i}|_{V}).$ The line bundle $\cA^{-i}$ is precisely the lattice for $\mathfrak{j}_{*}(\cA^{-i}|_{V})$  whose eigenvalues of the residue $res\nabla$ are between $(0,1]$. Note, the line bundle $\cA^{-i}$ is also the lattice for $\mathfrak{j}_{*}(\cA^{-i}|_{V})$  whose eigenvalues of the residue $res \nabla$ are between $[0,1)$. 
 
 By Corollary \ref{LogCor},  we have quasi-isomorphisms
$$Gr^{F}_{-p}DR(j_{!}\tilde{\cV})  \simeq \displaystyle \bigoplus_{i=1}^{N-1}(\Omega^{p}_{X}(\log H) \otimes \cA^{-i})[n-p].$$
Thus, we obtain the well-known isomorphism
$$ \Pi_{*}\Omega^{p}_{\cY} \cong \Omega^{p}_{X} \oplus \bigoplus_{i=1}^{N-1} (\Omega^{p}_{X}(\log H) \otimes \cA^{-i}) .$$

  Now let $\cL$ be an arbitrary line bundle on a variety $X$, possibly singular, such that for a positive integer $N \geq 0$, there exists a global section
    $$s \in \Gamma(X, \cL^{N})$$
    that defines an effective divisor $D.$ Let $\pi:Y \rightarrow X$ denote the $N$-fold cyclic covering of $D.$ Since $X$ and $D$ can be arbitrary, $Y$ may be singular. 
    \begin{rmk}
        We are not taking the normalization of $Y,$ which differs from \cite{EsnaultViehweg}. Our definition of the $N$-fold cyclic covering of $D$ aligns with the construction given in \cite[\S 4.1.B]{LararsfeldI}.
    \end{rmk} 
    If we set $D':= \pi^{-1}(D) \subset Y$, then the induced map $\pi:D'_{red} \rightarrow D_{red}$ is an isomorphism. Also, the induced map $\pi: Y \backslash D' \rightarrow X \backslash D$ is a finite \'etale map.  For finite \'etale maps, we need the following lemma.
    \begin{lemma}
        If $f: Z' \rightarrow Z$ is a finite \'etale map, then there exists $\cM \in D^{b}MHM(Z)$ such that 
        $$f_{+}\Q^{H}_{Z'}[d] \simeq \Q^{H}_{Z}[d] \oplus \cM.$$
    \end{lemma}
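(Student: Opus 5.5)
We use the adjunction between $f^{*}$ and $f_{*}$ together with the trace map for finite morphisms. Since $f$ is finite, it is proper, so $f_{!} = f_{+}$. Since $f$ is étale, it is smooth of relative dimension $0$, so $f^{!} = f^{*}$ as functors on $D^{b}MHM$; in particular $f^{!}\Q^{H}_{Z}[d] \simeq \Q^{H}_{Z'}[d]$. The unit of the adjunction $(f^{*}, f_{+})$ then gives a morphism
$$u: \Q^{H}_{Z}[d] \rightarrow f_{+}f^{*}\Q^{H}_{Z}[d] = f_{+}\Q^{H}_{Z'}[d].$$
The counit of the adjunction $(f_{!}, f^{!})$ gives the trace morphism
$$tr: f_{+}\Q^{H}_{Z'}[d] = f_{!}f^{!}\Q^{H}_{Z}[d] \rightarrow \Q^{H}_{Z}[d].$$
Both are morphisms in $D^{b}MHM(Z)$, so they are compatible with Hodge and weight filtrations automatically.

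Next we compute $tr \circ u$, an endomorphism of $\Q^{H}_{Z}[d]$. We may assume $Z$ is connected; otherwise we argue componentwise. By Theorem \ref{Saito'sMainThm}, $rat$ is compatible with these functors, and $rat$ is faithful on $\mathrm{Hom}(\Q^{H}_{Z}[d], \Q^{H}_{Z}[d])$. So it suffices to check the composite on the underlying constructible complexes. There, $u$ is the adjunction map $\Q_{Z} \rightarrow f_{*}\Q_{Z'}$ and $tr$ is the summation-over-fibers map $f_{*}\Q_{Z'} \rightarrow \Q_{Z}$. Locally $f$ is a trivial covering of degree $m$, so $tr \circ u$ is multiplication by $m = \deg f$, which is invertible in $\Q$.

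Set $e = \frac{1}{m}\, u \circ tr$. This is an idempotent endomorphism of $f_{+}\Q^{H}_{Z'}[d]$ whose image is identified with $\Q^{H}_{Z}[d]$ via $u$. Because $MHM(Z)$ is abelian, $D^{b}MHM(Z)$ is Karoubian: it is triangulated with bounded t-structure, and idempotents split (for instance by Balmer–Schlichting, or by splitting on each cohomology object and using boundedness). Setting $\cM := \mathrm{cone}(u)$, we obtain a split triangle and hence $f_{+}\Q^{H}_{Z'}[d] \simeq \Q^{H}_{Z}[d] \oplus \cM$.

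In fact the argument is even simpler. Since $f$ is finite étale, $f_{+}\Q^{H}_{Z'}[d]$ is concentrated in a single perverse degree when $Z$ is smooth of dimension $d$, and the splitting takes place in the abelian category. In general, any morphism $v$ with $v \circ u = \mathrm{id}$ forces the triangle $\Q^{H}_{Z}[d] \xrightarrow{u} f_{+}\Q^{H}_{Z'}[d] \rightarrow \cM \xrightarrow{+1}$ to split, because the connecting morphism $\cM \to \Q^{H}_{Z}[d][1]$ vanishes. This uses only the triangulated structure, so no Karoubian argument is needed.

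The main obstacle is justifying that $tr \circ u$ is $m$ times the identity in $D^{b}MHM(Z)$ and not only on the underlying $\Q$-complexes. Here we use that $\mathrm{Hom}(\Q^{H}_{Z}[d], \Q^{H}_{Z}[d])$ injects into $\mathrm{Hom}$ of the underlying perverse sheaves. This injectivity holds because $\Q^{H}_{Z}[d]$ has cohomology in $MHM$, and on the heart the functor $rat$ is faithful. If $\Q^{H}_{Z}[d]$ is not in the heart because $Z$ is singular, we instead use $\mathrm{Hom}_{D^{b}MHM}(\Q^{H}_{Z}, \Q^{H}_{Z}) = H^{0}$ of the Hodge structure $a_{Z*}\Q^{H}_{Z}$ restricted to type $(0,0)$. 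This is $\Q^{\pi_{0}(Z)}$, which again embeds into the topological endomorphisms.
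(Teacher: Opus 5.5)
Your proposal is correct and follows the paper's proof: the unit $\Q^{H}_{Z}[d]\to f_{+}f^{*}\Q^{H}_{Z}[d]$ and the trace $f_{!}f^{!}\Q^{H}_{Z}[d]\to \Q^{H}_{Z}[d]$ (using $f_{+}=f_{!}$ and $f^{!}=f^{*}$) compose to multiplication by $\deg f$, so the unit has a retraction and the triangle splits; both you and the paper tacitly need $f$ surjective so that this degree is nonzero. The only difference is how you certify that the composite is $\deg f$ in $D^{b}MHM(Z)$ and not merely on $\Q$-complexes: the paper restricts to $Z_{reg}$ via the injection $\mathrm{Hom}(\Q^{H}_{Z}[d],\Q^{H}_{Z}[d])\hookrightarrow \mathrm{Hom}(\Q^{H}_{Z_{reg}}[d],\Q^{H}_{Z_{reg}}[d])$, whereas you identify this Hom with $H^{0}(Z,\Q)$ via $a_{Z}$ and use that $rat$ is injective on it, which is equally valid.
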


    \begin{proof}
        Let $k$ denote the degree of the map $f$. Since $f$ is finite, $f_{+} = f_{!}.$ Also, since $f$ is \'etale, $f^{!} = f^{*}.$ From this observation, we have the adjunction maps
        $$\Q^{H}_{Z}[d] \rightarrow f_{+}f^{*}\Q^{H}_{Z}[d] = f_{+}\Q^{H}_{Z'}[d]$$
        $$f_{+}\Q^{H}_{Z'}[d] = f_{!}f^{*}\Q^{H}_{Z}[d] = f_{!}f^{!}\Q^{H}_{Z}[d] \rightarrow \Q^{H}_{Z}[d].$$
        The composition of the preceding maps is identified with the multiplication by the degree of the map $f,$
        $$\bT \Q^{H}_{Z}[d] \arrow[rr, bend left, "k"] \arrow[r] & f_{+}\Q^{H}_{Z'}[d] \arrow[r] & \Q^{H}_{Z}[d] \eT$$
        Indeed, if $Z$ is smooth, then this is well known. Let $Z_{reg}$ denote the smooth set of $Z$. Then there is an inclusion 
        $$Hom(\Q^{H}_{Z}[d],\Q^{H}_{Z}[d]) \subseteq Hom(\Q^{H}_{Z_{reg}}[d], \Q^{H}_{Z_{reg}}[d]).$$
        Because the map $\Q^{H}_{Z}[d] \rightarrow  f_{+}\Q^{H}_{Z'}[d] \rightarrow \Q^{H}_{Z}[d]$ agrees with the multiplication by $k$ on the smooth locus, the inclusion above implies it must be the multiplication map. Therefore, the natural map $\Q^{H}_{Z'}[d] \rightarrow f_{+}\Q^{H}_{Z}[d]$ splits. So, there exists $\cM \in D^{b}MHM(Z)$ such that 
        $$f_{+}\Q^{H}_{Z'}[d] \simeq \Q^{H}_{Z}[d] \oplus \cM.$$
    \end{proof}
    \begin{rmk}
        A more general discussion of the trace morphism in $D^{b}MHM(X)$ is given by \cite[Lemma 5.3]{CMSS}, and recently in \cite{DORdescent}.
    \end{rmk}
        If we set $U' = Y \backslash D'$ and $U = X \backslash D$, then the finite \'etale map $\pi: U' \rightarrow 
      U$ induces an isomorphism 
      $$\pi_{+}\Q^{H}_{Y}[n]\vert_{U} \simeq \Q^{H}_{U}[n] \oplus \cV \quad \text{for some $\cV \in D^{b}MHM(U).$}$$
      If $j:U \hookrightarrow X$ is the inclusion map, then the isomorphism above gives a natural map
    $$\Q^{H}_{X}[n] \oplus j_{!}\cV \rightarrow \pi_{+}\Q^{H}_{Y}[n].$$
    This map is compatible with proper base change, and thus we have the morphism of triangles,
    $$\bT j_{!}j^{*}(\pi_{+}\Q^{H}_{Y}[n]) \ar[r] & \pi_{+}\Q^{H}_{Y}[n] \ar[r] & i_{+}i^{*}(\pi_{+}\Q^{H}_{Y}[n]) \arrow[r, "+1"]  & \hfill\\
j_{!}\Q^{H}_{U}[n] \oplus j_{!}\cV \ar[r] \ar[u, "\rotatebox{90}{$\sim$}"] & \Q^{H}_{X}[n] \oplus j_{!}\cV \ar[r] \ar[u] & (\Q^{H}_{D_{red}}[n-1]) [1] \arrow[r, "+1"] \ar[u, "\rotatebox{90}{$\sim$}"] & \hfill \eT$$
    The outer morphisms are isomorphisms. Therefore, the map $\Q^{H}_{X}[n] \oplus j_{!}\cV \rightarrow \pi_{+}\Q^{H}_{Y}[n]$ is also an isomorphism.  Applying the functor $Gr^{F}_{-p}DR(\bullet)$, and using that the map $\pi:Y \rightarrow X$ is affine and proper \ref{direct}, for $0 \leq p \leq n$, we have quasi-isomorphisms,
    $$\pi_{*}\OmpY \simeq Gr^{F}_{-p}DR(\pi_{+}\Q^{H}_{Y}[n])[p-n]$$
    $$\simeq Gr^{F}_{-p}DR(\Q^{H}_{X}[n])[p-n] \oplus Gr^{F}_{-p}DR(j_{!}\cV)[p-n] \simeq \underline{\Omega}^{p}_{X} \oplus Gr^{F}_{-p}DR(j_{!}\cV)[p-n].$$
The quasi-isomorphism $Gr^{F}_{-p}DR(\Q^{H}_{Y}[n])[p-n] \simeq \Omp$ is due to Saito \cite{saito5}, where $\Omp$ is the $p^{th}$ graded piece of the Du-Bois complex (up to a shift) \cite{dubois}.
      \begin{thm}\label{MainThm1}
     Let $\cL$ be an arbitrary line bundle on an algebraic variety $X$ of dimension $n$. Suppose that for a positive integer $N$ there exists a global section $s \in \Gamma(X, \cL^{N})$
    that defines an effective Cartier divisor $D.$ Let $\pi: Y \rightarrow X$ denote the $N$-fold cyclic covering of $D$.  If $j: U= X \backslash D \hookrightarrow X$ is the natural map, then there exists $\cV \in D^{b}MHM(U)$  such that we have the following isomorphisms,
    $$\pi_{+}\Q^{H}_{Y}[n] \simeq \Q^{H}_{X}[n] \oplus j_{!}\cV.$$
    For $0 \leq p \leq n=\dim(X)$, if we apply the functor $Gr^{F}_{-p}DR(\bullet):D^{b}MHM(X) \rightarrow D^{b}_{coh}(\cO_{X})$ to the isomorphism above, we obtain the quasi-isomorphism
    $$\pi_{*}\OmpY  \simeq \underline{\Omega}^{p}_{X} \oplus Gr^{F}_{-p}DR( j_{!}\cV)[p-n].$$
    \end{thm}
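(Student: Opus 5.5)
The plan is to assemble the argument sketched just before the statement into a formal proof, in three steps. The first step works over $U$ and produces the summand $\cV$. The second step shows that the extension by zero $j_{!}\cV$ already accounts for all of $\pi_{+}\Q^{H}_{Y}[n]$ along $D$. The third step passes to graded de Rham complexes.

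First, set $U' = Y\backslash D'$ with $D' = \pi^{-1}(D)$, and let $\pi_U: U'\to U$ be the restriction of $\pi$. Since $\pi_U$ is finite \'etale, the preceding Lemma (splitting of $f_{+}\Q^{H}_{Z'}[d]$ for finite \'etale $f$) gives
$$\pi_{U+}\Q^{H}_{U'}[n]\simeq \Q^{H}_{U}[n]\oplus\cV \quad \text{for some } \cV\in D^{b}MHM(U).$$
Proper base change for the cartesian square formed by $j:U\hookrightarrow X$ and $\pi$ identifies $j^{*}\pi_{+}\Q^{H}_{Y}[n]$ with $\pi_{U+}\Q^{H}_{U'}[n]$. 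Next, construct a morphism
$$\Phi:\Q^{H}_{X}[n]\oplus j_{!}\cV\to \pi_{+}\Q^{H}_{Y}[n].$$
On the first summand $\Phi$ is the adjunction map $\Q^{H}_{X}[n]\to\pi_{+}\pi^{*}\Q^{H}_{X}[n]=\pi_{+}\Q^{H}_{Y}[n]$. On the second summand it is $j_{!}\cV\to j_{!}j^{*}\pi_{+}\Q^{H}_{Y}[n]\to \pi_{+}\Q^{H}_{Y}[n]$, where the first arrow is the inclusion of the summand and the second is the counit of $j_{!}\dashv j^{*}$.

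Second, show $\Phi$ is an isomorphism. Compare the localization triangles $j_{!}j^{*}\to \mathrm{id}\to i_{*}i^{*}\xrightarrow{+1}$, where $i:D_{red}\hookrightarrow X$, applied to source and target of $\Phi$; naturality of the triangle gives the morphism between them. On the $j_{!}j^{*}$ terms, $\Phi$ restricts to the identification of the first step, so it is an isomorphism. On the $i_{*}i^{*}$ terms, $i^{*}j_{!}\cV=0$, so the source term is $i_{*}\Q^{H}_{D_{red}}[n]$. By proper base change, $i^{*}\pi_{+}\Q^{H}_{Y}[n]\simeq (\pi|_{D'_{red}})_{+}\Q^{H}_{D'_{red}}[n]$. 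Since $\pi:D'_{red}\to D_{red}$ is an isomorphism, this equals $\Q^{H}_{D_{red}}[n]$, and the induced map is the adjunction unit, again an isomorphism. Both outer maps being isomorphisms, the triangulated five-lemma shows $\Phi$ is an isomorphism.

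\textbf{Main obstacle.} The second step is the delicate one: the squares of the triangle diagram must actually commute in $D^{b}MHM(X)$. This requires the compatibility of $\Phi$ with the adjunctions and with base change, and the identification of the induced map on the $D$-part with the unit of an isomorphism rather than merely some map between isomorphic objects. I would check this on the level of $rat$, using faithfulness on the relevant Hom where needed. Alternatively, I would check it directly from the adjunction: the composite $\Q^{H}_{X}[n]\to i_{*}i^{*}\pi_{+}\Q^{H}_{Y}[n]$ is $i_{*}$ applied to the base-changed unit.

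Third, obtain the graded statement. Apply $Gr^{F}_{-p}DR$ to the isomorphism $\Phi$. Because $\pi$ is finite, hence proper and affine, Theorem~\ref{direct} gives $Gr^{F}_{-p}DR(\pi_{+}\Q^{H}_{Y}[n])\simeq \bR\pi_{*}Gr^{F}_{-p}DR(\Q^{H}_{Y}[n])=\pi_{*}Gr^{F}_{-p}DR(\Q^{H}_{Y}[n])$. Saito's identification $Gr^{F}_{-p}DR(\Q^{H}_{Z}[n])[p-n]\simeq\underline{\Omega}^{p}_{Z}$, applied to $Z=Y$ and $Z=X$, then yields the stated decomposition
$$\pi_{*}\OmpY\simeq \underline{\Omega}^{p}_{X}\oplus Gr^{F}_{-p}DR(j_{!}\cV)[p-n],$$
using that $Gr^{F}_{-p}DR$ is additive.
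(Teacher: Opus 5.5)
Your proposal is correct and follows the paper's proof essentially step for step. Both arguments use the finite \'etale splitting over $U$ and the natural map $\Q^{H}_{X}[n]\oplus j_{!}\cV\to\pi_{+}\Q^{H}_{Y}[n]$. This map is shown to be an isomorphism by comparing localization triangles whose outer maps are isomorphisms, using $D'_{red}\cong D_{red}$. The graded statement then follows from $Gr^{F}_{-p}DR$, Theorem~\ref{direct}, and Saito's identification with the Du Bois complex.

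Your explicit construction of $\Phi$ and your attention to commutativity of the squares are more careful than the paper, which simply asserts the morphism of triangles. You can also bypass that issue: the cone $C$ of $\Phi$ satisfies $j^{*}C=0$ and $i^{*}C=0$, because $j^{*}\Phi$ and $i^{*}\Phi$ are isomorphisms, so $C=0$ by the localization triangle for $C$.
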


    \begin{set}\label{setting}
        For the rest of this paper, we fix the notation of $\cL$ being an arbitrary line bundle on a variety $X$ of dimension $n$ such that for a positive integer $N $, there exists a global section
    $$s \in \Gamma(X, \cL^{N})$$
    that defines an effective Cartier divisor $D.$ We set $U = X \backslash D$ with natural map $j:U \hookrightarrow X.$
    \end{set}

    \section{Cyclic Coverings For Smooth Varieties}\label{S3}

     For this section,  $X$ will be a smooth irreducible variety of dimension $n$. With $X$ smooth and irreducible, $\Q^{H}_{X}[n] \in HM_{X}(X,n)$ is a pure Hodge module. We are in the setting of \ref{setting}.   Thus, suppose that for a positive integer $N$ there exists a global section $s \in \Gamma(X, \cL^{N})$
    that defines an effective Cartier divisor $D.$ Let $\pi: Y \rightarrow X$ denote the $N$-fold cyclic covering of $D$.
    
    From our main theorem, we have the decomposition
     $$\pi_{+}\Q^{H}_{Y}[n] \cong \Q^{H}_{X}[n] \oplus j_{!}\cV \quad \text{for $\cV \in D^{b}MHM(U)$.}$$
     It is important to note that since $X$ is smooth, $\cV \in HM_{U}(U,n)$ is a polarized variation of Hodge structure on the smooth variety $U = X \backslash D.$

In the derived category of mixed Hodge modules $D^{b}MHM(X)$, we have the duality functor 
$$\D: D^{b}MHM(X) \rightarrow D^{b}MHM(X)^{op}.$$
Since the map $\pi:Y \rightarrow X$ is projective, the theorem above implies the following isomorphisms
$$\pi_{+}\D(\Q^{H}_{Y}[n]) \simeq \D(\pi_{+}\Q^{H}_{Y}[n]) \simeq \D\bigg( \Q^{H}_{X} \oplus j_{!}\cV \bigg) \simeq \D(\Q^{H}_{X}[n]) \oplus \D(j_{!}\cV) \simeq \D(\Q^{H}_{X}[n]) \oplus j_{+}(\D(\cV)).$$
 With $X$ be being smooth, we have isomorphisms $\D(\Q^{H}_{X}[n]) \simeq \Q^{H}_{X}(n)$ and $\D(\cV) \cong \cV(n)$. Thus, there is an isomorphism
     $$\pi_{+}\D(\Q^{H}_{Y}[n])(-n) \simeq \Q^{H}_{X}[n] \oplus j_{+}\cV.$$
With the duality functor $\D$, we have the duality functor on $D^{b}_{coh}(\cO_{X}),$
     $$\cD: D^{b}_{coh}(\cO_{X}) \rightarrow  D^{b}_{coh}(\cO_{X})^{op}$$
     $$\cD(\cF) := \bR \cH om_{\cO_{X}}(\cF, \omega_{X}) \quad \text{for $\cF \in D^{b}_{coh}(\cO_{X}).$}$$
     Since $X$ is smooth, we set $\omega_{X} = \Omega^{n}_{X}$. For any $\cM \in D^{b}MHM(X)$, by \cite[\S 2.4.4]{saito},
     $$\cD(Gr^{F}_{-p}DR(\cM))[n] \simeq Gr^{F}_{p}DR(\D(\cM)).$$
     Thus,
     $$\pi_{*}\cD(\underline{\Omega}^{n-p}_{Y}) \cong \pi_{*}\cD(Gr^{F}_{p-n}DR(\Q^{H}_{Y}))[p] \cong Gr^{F}_{-p}DR \bigg(\pi_{+}\D(\Q^{H}_{Y}[n])(-n) \bigg)[p-n]$$
     $$\simeq Gr^{F}_{-p}DR\bigg(\Q^{H}_{X}[n] \oplus j_{+}\cV \bigg)[p-n] \simeq \Omega^{p}_{X}  \oplus Gr^{F}_{-p}DR(j_{+}\cV)[p-n].$$
     Furthermore, there is a natural identification
      $$\pi_{+}IC^{H}_{Y} \cong  \Q^{H}_{X}[n] \oplus IC_{X}(\cV)  \cong  \Q^{H}_{X}[n] \oplus  \im [j_{!}\cV \rightarrow j_{+}\cV]. $$
      In the case when $\Q^{H}_{Y}[n] \cong IC^{H}_{Y}$, the variety $Y$ is said to be a \emph{rational homology manifold}.

\subsection{Simple Normal Crossing Support}
First, consider the case where $D$  has simple normal crossing support. That is, $E = D_{red}$ is a simple normal crossing divisor. Assume $D = a_{1}E_{1}+a_{2}E_{2}+ \cdots +a_{\ell}E_{\ell}$, where $E_{i}$ are the irreducible components. Let $\pi: Y \rightarrow X$ be the $N$-fold cyclic covering of  $D$.  From our main theorem, we have the decomposition
     $$\pi_{+}\Q^{H}_{Y}[n] \cong \Q^{H}_{X}[n] \oplus j_{!}\cV \quad \text{for $\cV \in HM_{U}(U,n)$.}$$ If $U = X \backslash D$ and $U' = Y \backslash \pi^{*}D$, then the map $\pi:U' \rightarrow U$ is finite \'etale, and there is an isomorphism
     $$\pi_{*}\cO_{U'} \cong \cO_{U} \oplus \bigg( \bigoplus_{i=1}^{N-1} \cL^{-i}|_{U} \bigg).$$
     The locally free sheaf  $\bigg( \displaystyle \bigoplus_{i=1}^{N-1} \cL^{-i}|_{U} \bigg) $ is the left $D_{U}$-module of the pure Hodge module $\cV$, whose Hodge filtration is trivial. The differential map $\cO_{U'} \rightarrow \Omega^{1}_{U'}$ induces the connection
      $$\bigg( \displaystyle \bigoplus_{i=1}^{N-1} \cL^{-i}|_{U} \bigg) \rightarrow \Omega^{1}_{U}\otimes \bigg( \displaystyle \bigoplus_{i=1}^{N-1} \cL^{-i}|_{U} \bigg).$$
      The connection splits into individual connections
      $ \cL^{-i}|_{U} \rightarrow \Omega^{1}_{U}\otimes  \cL^{-i}|_{U}  \quad \text{for $1 \leq i \leq N-1$}.$
      If $L_{i}$ denotes the $\C$-local system defined by the connection $ \cL^{-i}|_{U} \rightarrow \Omega^{1}_{U}\otimes  \cL^{-i}|_{U},$ then $\cH_{i} = ((\cL^{-i}, F_{\bullet}\cL^{-i}), L_{i})$ is a filtered regular holonomic $D_{U}$-module, but does not in general have a $\Q$-structure.     This connection can be extended to a logarithmic connection 
       $$ \cL^{-i} \rightarrow \Omega^{1}_{X}(\log E)\otimes  \cL^{-i}  \quad \text{for $1 \leq i \leq N-1$.}$$
      Locally, the divisor $D= \{ux_{1}^{a_{1}}x_{2}^{a_{2}}\cdots x_{\ell}^{a_{\ell}}= 0\}$, where $u$ is a unit. The logarithmic connection of $\cL^{-i} = \cO_{X}t^{i}$ is given by
      $$\nabla(gt^{i}) = \bigg(dg + \dfrac{i}{N}\dfrac{du}{u}g+\sum_{j=1}^{\ell}\frac{ia_{j}}{N}\frac{dx_{j}}{x_{j}}g\bigg)t^{i} \quad \text{ for $g \in \cO_{X}$.}$$
       To ensure the eigenvalues of $res \nabla$ along the irreducible components of $E$ lie in $(0,1]$, we need to twist the connection appropriately. Let 
      \begin{equation}\label{m_{ij}}
      m_{ij} = \begin{cases}
    
      \lfloor (ia_{j})/N\rfloor & \text{if $(ia_{j})/N \notin \Z$} \\ \\
       (ia_{j})/N-1 & \text{if $(ia_{j})/N \in \Z .$}
      \end{cases}
      \end{equation}
      Then, for $1 \leq i \leq N-1$, the logarithmic connection 
      $$\bigg(\bigotimes_{j=1}^{\ell}\cO_{X}(m_{ij}E_{j})\bigg) \otimes \cL^{-i} \rightarrow \bigg(\Omega^{1}_{X}(\log E) \otimes \bigotimes_{j=1}^{\ell}\cO_{X}(m_{ij}E_{j})\bigg)\otimes \cL^{-i} $$
      has eigenvalues between $(0,1].$ In order  to have the eigenvalues of $res \nabla$ along the irreducible components of $E$ to be contained in $[0,1),$ let 
      \begin{equation}\label{m'_{ij}}
      m'_{ij} = 
      \lfloor (ia_{j})/N\rfloor 
      \end{equation}
      Then, for $1 \leq i \leq N-1$, the logarithmic connection 
      $$\bigg(\bigotimes_{j=1}^{\ell}\cO_{X}(m'_{ij}E_{j})\bigg) \otimes \cL^{-i} \rightarrow \bigg(\Omega^{1}_{X}(\log E) \otimes \bigotimes_{j=1}^{\ell}\cO_{X}(m'_{ij}E_{j})\bigg)\otimes \cL^{-i}$$
      has eigenvalues between $[0,1).$ As a consequence of Corollary \ref{LogCor}, we have quasi-isomorphisms
      $$Gr^{F}_{-p}DR(j_{!}\cH_{i})[p-n] \simeq \bigg(\Omega^{p}_{X}\log(E)\otimes \bigotimes_{j=1}^{\ell}\cO_{X}(m_{ij}E_{j})\bigg)\otimes \cL^{-i}$$
      $$Gr^{F}_{-p}DR(j_{+}\cH_{i})[p-n] \simeq  \bigg(\Omega^{n-p}_{X}\log(E)\otimes \bigotimes_{j=1}^{\ell}\cO_{X}(m'_{ij}E_{j})\bigg)\otimes\cL^{-i}.$$

      $$Gr^{F}_{-p}DR(\pi_{+}\Q^{H}_{Y}[n]) \simeq Gr^{F}_{-p}DR(\Q^{H}_{X}[n]) \oplus Gr^{F}_{-p}DR(j_!\cV) \simeq Gr^{F}_{-p}DR(\Q^{H}_{X}[n]) \oplus \bigg( \bigoplus_{i=1}^{N-1}Gr^{F}_{-p}DR(j_{!}\cH_{i})\bigg)$$      $$\pi_{*}\OmpY \simeq \Omega^{p}_{X} \oplus \bigoplus^{N-1}_{i=1} \bigg(\Omega^{p}_{X}\log(E)\otimes \bigotimes_{j=1}^{\ell}\cO_{X}(m_{ij}E_{j})\bigg)\otimes \cL^{-i}.$$
      Similarly, we also have the quasi-isomorphism
      $$\pi_{*}\cD(\OmpY) \simeq \Omega^{n-p}_{X} \oplus \bigoplus^{N-1}_{i=1} \bigg(\Omega^{n-p}_{X}\log(E)\otimes \bigotimes_{j=1}^{\ell}\cO_{X}(m'_{ij}E_{j})\bigg)\otimes\cL^{-i}.$$
 Notice that these quasi-isomorphisms give us the following proposition.
      \begin{prop}
    If $\pi:Y \rightarrow X$ is the $N$-fold cyclic covering $D$, where $D$ has simple normal crossing support, then 
    $$\cH^{i}(\cD(\underline{\Omega}^{n-p}_Y))=0 \quad \text{for $p \in \Z$ and $i>0$.}$$
    $$\cH^{i}(\underline{\Omega}^{p}_Y)=0 \quad \text{for $p \in \Z$ and $i>0$.}$$
\end{prop}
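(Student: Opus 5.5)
The plan is to read the vanishing off directly from the two quasi-isomorphisms just established in the simple normal crossing case, and then descend from $X$ to $Y$ using that $\pi$ is finite.

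First, since $\pi:Y \rightarrow X$ is affine (indeed finite), $\pi_{*}$ is exact on quasi-coherent sheaves. Hence for any complex $\cF \in D^{b}_{coh}(\cO_{Y})$ we have $\pi_{*}\cH^{i}(\cF) \cong \cH^{i}(\pi_{*}\cF)$. Moreover, a coherent sheaf $\mathcal{G}$ on $Y$ vanishes if and only if $\pi_{*}\mathcal{G}$ vanishes, because $\pi$ is affine and $\pi_{*}\mathcal{G}$ recovers $\mathcal{G}$ as a module over $\pi_{*}\cO_{Y}$. So it suffices to show that $\cH^{i}(\pi_{*}\OmpY)=0$ and $\cH^{i}(\pi_{*}\cD(\underline{\Omega}^{n-p}_{Y}))=0$ for $i>0$.

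Second, I would apply the quasi-isomorphisms above. They give
$$\pi_{*}\OmpY \simeq \Omega^{p}_{X} \oplus \bigoplus_{i=1}^{N-1}\Big(\Omega^{p}_{X}\log(E)\otimes \bigotimes_{j}\cO_{X}(m_{ij}E_{j})\Big)\otimes\cL^{-i},$$
and the right-hand side is a locally free sheaf placed in degree $0$. Therefore all higher cohomology sheaves vanish. For the dual complex, the derivation via $\pi_{+}\D(\Q^{H}_{Y}[n])(-n)\simeq \Q^{H}_{X}[n]\oplus j_{+}\cV$ gives
$$\pi_{*}\cD(\underline{\Omega}^{n-p}_{Y}) \simeq \Omega^{p}_{X}\oplus \bigoplus_{i}\Big(\Omega^{p}_{X}\log(E)\otimes\bigotimes_{j}\cO_{X}(m'_{ij}E_{j})\Big)\otimes\cL^{-i}.$$
This is again a sheaf in degree $0$, since Corollary \ref{LogCor} applied to $j_{+}\cH_{i}$ yields a single locally free sheaf shifted by $[n-p]$, and the shift $[p-n]$ cancels it.

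Third, I would handle the degenerate ranges of $p$. For $p<0$ or $p>n$ both sides are zero, since $Gr^{F}_{-p}DR$ of these modules vanishes outside $0\le p\le n$ (and $\OmpY=0$ for $p>\dim Y=n$). So the statement holds trivially for all $p\in\Z$.

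The main point requiring care is the bookkeeping of shifts and indices in the dual statement. One has to check that $Gr^{F}_{-p}DR(j_{+}\cH_{i})[p-n]$ is concentrated in degree $0$ and that it matches the index $n-p$ appearing in $\cD(\underline{\Omega}^{n-p}_{Y})$. I would verify this using the duality $\cD(Gr^{F}_{-p}DR(\cM))[n]\simeq Gr^{F}_{p}DR(\D\cM)$ and the isomorphism $\D(\cV)\cong\cV(n)$, exactly as in the computation preceding the subsection. Once that is done, both vanishing statements follow immediately.
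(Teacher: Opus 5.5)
Your proposal is correct and follows the paper's argument. The paper obtains the proposition by reading it off from the two quasi-isomorphisms that express $\pi_{*}\underline{\Omega}^{p}_{Y}$ and $\pi_{*}\cD(\underline{\Omega}^{n-p}_{Y})$ as locally free sheaves in degree $0$ in the simple normal crossing case; your observation that $\pi$ is finite, so $\pi_{*}$ is exact and detects vanishing of coherent sheaves, supplies the step the paper leaves implicit in passing from $X$ back to $Y$.
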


   \begin{rmk}
       If $n:Y'\rightarrow Y$ is the normalization of $Y$, then 
       $$n_{*}\cO_{Y'} \cong \cH^{0}\cD(\underline{\Omega}^{n}_{Y}).$$
       Therefore,
       $$\pi_{*}n_{*}\cO_{Y'} \cong \cO_{X} \oplus \bigoplus^{N-1}_{i=1} \bigg(\bigotimes_{j=1}^{\ell}\cO_{X}(m'_{ij}E_{j})\bigg)\otimes\cL^{-i},$$
which matches precisely with \cite[Cor. 3.11]{EsnaultViehweg} in the case when $D$ has simple normal crossing support. 
   \end{rmk}

  \begin{ex}
      Consider the Whitney umbrella,
      $$\pi: Y = \Spec  \bigg( \C[x,y,t]/\langle t^2  -x^2y \rangle\bigg) \rightarrow \Spec \bigg(\C[x,y] \bigg) = 
      X.$$
      We  have a decomposition
       $$\pi_{*}\cO_{Y} = \cO_{X} \oplus \cL^{-1},$$
      where $\cL^{-1} = \cO_{X}t$. There is a logarithmic connection,
      $$\nabla(gt) = \bigg(dg + g\bigg(\frac{dx}{x} + \frac{1}{2} \frac{dy}{y}\bigg)\bigg)t \quad \text{for $g \in \cO_{X}.$}$$
      Notice that the eigenvalues of $res \nabla$ along the irreducible components are between $(0,1].$ Let $H$ denote the divisor $x = 0$ for $X$. If we twist the connection by $H$, then the eigenvalues of $res \nabla$ along the irreducible components  are between $[0,1),$
      $$\nabla\bigg(g\frac{t}{x}\bigg) = \bigg(dg + \frac{1}{2}g \frac{dy}{y}\bigg)\frac{t}{x}\quad \text{for $g \in \cO_{X}.$}$$
       If we set $E$ to be the divisor $xy=0$, then 
      $$\pi_{*}\underline{\Omega}^0_{Y} \cong \cO_{X} \oplus \cL^{-1} \cong \pi_{*}\cO_{Y}, \quad  \pi_{*}\underline{\Omega}^1_{Y} \cong \Omega^1_{X} \oplus \Omega^{1}_{X}(\log E) \otimes \cL^{-1}, \quad \pi_{*}\underline{\Omega}^2_{Y} \cong \Omega^2_{X} \oplus \Omega^{2}_{X}(E) \otimes \cL^{-1} $$
      $$\pi_{*}\cD(\underline{\Omega}^2_{Y}) \cong \cO_{X} \oplus (\cO_{X}(H) \otimes \cL^{-1}) \quad \pi_{*}\cD(\underline{\Omega}^1_{Y}) \cong \Omega^1_{X} \oplus (\Omega^{1}_{X}(\log E)\otimes\cO_{X}(H) \otimes \cL^{-1}),$$ 
      $$\pi_{*}\cD(\underline{\Omega}^0_{Y}) \cong \Omega^2_{X} \oplus (\Omega^{2}_{X}(E)\otimes\cO_X(H) \otimes \cL^{-1}) $$
  \end{ex}

  \begin{prop}\label{sncThm}
      Assume $D = a_{1}E_{1}+a_{2}E_{2}+ \cdots +a_{\ell}E_{\ell}$ is an effective divisor with simple normal crossing support, where $E_{i}$ are the irreducible components. If $\pi:Y \rightarrow X$ is the $N$-fold cyclic covering of  $D$, then  $\Q_{Y}[n] \cong IC_{Y}^{H}$ if and only if $gcd(N,a_{j})=1$ for $1 \leq j \leq \ell.$
  \end{prop}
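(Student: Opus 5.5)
Since $\pi$ is finite, $\pi_{+}$ is exact and faithful on perverse objects and preserves intersection complexes. The plan is to reduce the condition $\Q^{H}_{Y}[n]\cong IC^{H}_{Y}$ to a statement on $X$. I will compare
$$\pi_{+}\Q^{H}_{Y}[n]\simeq \Q^{H}_{X}[n]\oplus j_{!}\cV \quad\text{and}\quad \pi_{+}IC^{H}_{Y}\simeq \Q^{H}_{X}[n]\oplus IC_{X}(\cV),$$
both established above. Suppose the natural morphism $\Q^{H}_{Y}[n]\to IC^{H}_{Y}$ is an isomorphism. Applying $\pi_{+}$ gives an isomorphism $j_{!}\cV\to IC_{X}(\cV)$. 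Conversely, if $j_{!}\cV\to IC_{X}(\cV)$ is an isomorphism, then $\pi_{+}$ of the natural morphism is an isomorphism. Since $\pi_{+}$ is conservative for finite maps (on underlying constructible complexes, $\pi_{*}$ of a finite map detects isomorphisms via stalks), $\Q^{H}_{Y}[n]\cong IC^{H}_{Y}$ follows. So the statement is equivalent to $j_{!}\cV\cong IC_{X}(\cV)$, which in turn is equivalent to $j_{!}\cV\to j_{+}\cV$ being an isomorphism, i.e.\ $j_{!}\cV\cong j_{+}\cV$.

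Next I would test this on graded de Rham complexes, working after $\otimes\C$ with the rank one summands $\cH_{i}$, $1\le i\le N-1$. We have $\cV\otimes\C=\bigoplus_{i}\cH_{i}$ as $D$-modules, so $j_{!}\cV\cong j_{+}\cV$ iff $j_{!}\cH_{i}\to j_{+}\cH_{i}$ is an isomorphism for all $i$ as filtered $D$-modules. By the Hodge filtration formula recalled in Section \ref{S1}, $j_{+}\cH_{i}$ is generated by the $[0,1)$-lattice $\cO_X(\sum_j m'_{ij}E_j)\otimes\cL^{-i}$ and $j_{!}\cH_{i}$ by the $(0,1]$-lattice $\cO_X(\sum_j m_{ij}E_j)\otimes\cL^{-i}$.

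If $\gcd(N,a_{j})=1$ for all $j$, then $ia_{j}/N\notin\Z$ for $1\le i\le N-1$. Hence $m_{ij}=m'_{ij}$ and all residues lie in $(0,1)$. Then the two lattices agree, and for a local system with no eigenvalue $1$ local monodromy around each $E_{j}$ one has $j_{!}=j_{+}$. This can be checked locally in snc coordinates: the stalk of $j_{*}L_i$ at any point of $E$ is a product of Künneth factors, each vanishing since the monodromy about $x_{j}$ is a nontrivial root of unity. Therefore $j_{!}\cV\cong j_{+}\cV$.

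Conversely, suppose $\gcd(N,a_{j})=d>1$ for some $j$. Take $i=N/d$; then $ia_{j}/N\in\Z$, so $m'_{ij}=m_{ij}+1$. Then $Gr^{F}_{-p}DR(j_{!}\cH_{i})$ and $Gr^{F}_{-p}DR(j_{+}\cH_{i})$ are the locally free sheaves $\Omega^{p}_{X}(\log E)\otimes\cO_X(\sum m_{ij}E_j)\otimes \cL^{-i}$ and $\Omega^{p}_{X}(\log E)\otimes\cO_X(\sum m'_{ij}E_j)\otimes\cL^{-i}$ (shifted) by Corollary \ref{LogCor}. Take $p=0$. The natural map between them is the inclusion $\cO_X(\sum m_{ij}E_j)\hookrightarrow\cO_X(\sum m'_{ij}E_j)$ (twisted by $\cL^{-i}$), whose cokernel is supported on $E_{j}$ and nonzero. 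So the map is not a quasi-isomorphism, and $j_{!}\cV\not\cong j_{+}\cV$. Alternatively, and more robustly, the local monodromy of $L_{i}$ around a general point of $E_{j}$ is $e^{-2\pi i\cdot ia_j/N}=1$, so the stalk of $\cH^{-n}(j_{*}L_i[n])$ there is nonzero while $j_{!}$ has zero stalk. This gives the failure directly on underlying perverse sheaves, and avoids needing the natural map to be identified with the lattice inclusion.

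The main obstacle is the first reduction. The concern is whether "$\Q_{Y}[n]\cong IC^{H}_{Y}$" should be read via the natural morphism, and whether the summand $\Q^{H}_{X}[n]$ is matched compatibly in the two decompositions. I would handle this by using the natural morphism $\Q^{H}_{Y}[n]\to IC^{H}_{Y}$ throughout and checking that under $\pi_{+}$ it restricts to the identity on $\Q^{H}_{X}[n]$, via adjunction over $U$ and the trace splitting of the lemma above. I would also use the stalk criterion on $D$ rather than Hodge filtrations, as this argument is purely topological.
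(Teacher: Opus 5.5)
Your proof is correct. Its first half is the paper's reduction: compare $\pi_{+}\Q^{H}_{Y}[n]\simeq\Q^{H}_{X}[n]\oplus j_{!}\cV$ with $\pi_{+}IC^{H}_{Y}\simeq\Q^{H}_{X}[n]\oplus IC_{X}(\cV)$, and reduce to whether $j_{!}\cV\to j_{+}\cV$ is an isomorphism. Where you differ is in how you settle that question.

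\emph{The paper's route.} It argues Hodge-theoretically. It identifies $Gr^{F}_{-p}DR(j_{!}\cH_{i})$ and $Gr^{F}_{-p}DR(j_{+}\cH_{i})$ with the $(0,1]$- and $[0,1)$-lattices twisted by $m_{ij}$ and $m'_{ij}$ (Corollary \ref{LogCor}). It then argues that $j_{!}\cV\cong j_{+}\cV$ exactly when $m_{ij}=m'_{ij}$, with the backward implication going through $\pi_{*}\OmpY\simeq\pi_{*}\cD(\underline{\Omega}^{n-p}_{Y})$ for all $p$.

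\emph{Your route.} You argue topologically. The monodromy of $L_{i}$ about $E_{j}$ is $\exp(-2\pi\sqrt{-1}\,ia_{j}/N)$, and by K\"unneth in snc coordinates the stalks of $\bR j_{*}L_{i}$ along $E$ vanish precisely when all these are $\neq 1$. Both routes end in the same arithmetic.

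\emph{Comparison.} Your route is more elementary and somewhat more robust:
\begin{enumerate}
\item It never needs the natural map $j_{!}\cH_{i}\to j_{+}\cH_{i}$ to be identified with the lattice inclusion on $Gr^{F}DR$.
\item It gives an honest non-isomorphism in the converse direction. The paper's ``it is clear that $m_{ij}=m'_{ij}$'' tacitly needs such an argument, since an abstract isomorphism $j_{!}\cV\cong j_{+}\cV$ only yields abstract isomorphisms of line bundles.
\item By working with the natural morphism $\Q^{H}_{Y}[n]\to IC^{H}_{Y}$ and conservativity of $\pi_{*}$ for finite $\pi$, you make precise the paper's final passage from $\pi_{+}\Q^{H}_{Y}[n]\cong\pi_{+}IC^{H}_{Y}$ to $\Q^{H}_{Y}[n]\cong IC^{H}_{Y}$.
\end{enumerate}
The paper's route, in return, stays inside its $Gr^{F}DR$ computations and produces $\pi_{*}\OmpY\simeq\pi_{*}\cD(\underline{\Omega}^{n-p}_{Y})$ as a byproduct.

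Two small points should be tidied.

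First, the compatibility you flag as the main obstacle is harmless. We have $\mathrm{Hom}(j_{!}\cV,\Q^{H}_{X}[n])=\mathrm{Hom}(\cV,\Q^{H}_{U}[n])$. Since $\Q^{H}_{X}[n]$ and $IC_{X}(\cV)$ are intermediate extensions, also $\mathrm{Hom}(\Q^{H}_{X}[n],IC_{X}(\cV))=\mathrm{Hom}(\Q^{H}_{U}[n],\cV)$. Over $U$ the morphism is the identity of $\Q^{H}_{U}[n]\oplus\cV$, so the off-diagonal components vanish.

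Second, you assert without justification that $j_{!}\cV\cong IC_{X}(\cV)$ is equivalent to $j_{!}\cV\cong j_{+}\cV$. Either of two arguments fills this:
\begin{enumerate}
\item Self-duality: $\D(j_{!}\cV)\cong j_{+}\D\cV$ with $\D\cV\cong\cV(n)$, and $IC_{X}(\cV)$ is self-dual up to twist. This is what the paper implicitly uses via $\pi_{+}\D(\Q^{H}_{Y}[n])(-n)\simeq\Q^{H}_{X}[n]\oplus j_{+}\cV$.
\item Your own stalk computation: near a general point of $E_{j}$ one has $IC_{X}(L_{i})=\tau_{\leq -n}\bR j_{*}L_{i}[n]$. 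Its $\cH^{-n}$-stalk is the space of monodromy invariants, which is nonzero when $ia_{j}/N\in\Z$, so $j_{!}\cV\not\cong IC_{X}(\cV)$ in that case.
\end{enumerate}
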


  \begin{proof}
      Recall that we have the following decompositions,
      $$\pi_{+}\Q^{H}_{Y}[n] \simeq \Q^{H}_{X}[n] \oplus j_{!}\cV$$
        $$\pi_{+}\D(\Q^{H}_{Y}[n])(-n) \simeq \Q^{H}_{X}[n] \oplus j_{+}\cV.$$
      $$\pi_{+}IC^{H}_{Y} \cong  \Q^{H}_{X}[n] \oplus IC_{X}(\cV)  \cong  \Q^{H}_{X}[n] \oplus  \im[j_{!}\cV \rightarrow j_{+}\cV].$$
      We also have $m_{ij}$ and $m'_{ij}$ defined in \ref{m_{ij}} and $\ref{m'_{ij}},$ respectively. If $\Q^{H}_{Y}[n] \cong IC^{H}_{Y}$, then $j_{!}\cV \cong j_{*}\cV$, and it is clear that $m_{ij}= m'_{ij}$. If $m_{ij}= m'_{ij}$, then  we must have
        $$\pi_{*}\OmpY \simeq \pi_{*}\cD(\underline{\Omega}^{n-p}_{Y})\quad \text{for $p \in \Z$,}$$
        which implies $j_{!}\cV = j_{+}\cV$. Thus
        $$\pi_{+}\Q^{H}_{Y}[n] \cong \pi_{+}IC^{H}_{Y} \cong \pi_{+}\D(\Q^{H}_{Y}[n])(-n),$$
        which then implies
        $$\Q^{H}_{Y}[n] \cong IC^{H}_{Y} \cong \D(\Q^{H}_{Y}[n])(-n).$$
        Thus, we have $\Q^{H}_{Y}[n] \cong IC^{H}_{Y}$ if and only if $m_{ij} = m'_{ij}$. This occurs if and only if 
      $$(ia_{j})/N \notin \Z \quad \text{for $1 \leq j \leq \ell$ and $1 \leq i \leq N-1$}$$
      $$ia_{j} \not\equiv 0 (\bmod{N}) \quad \text{for $1 \leq j \leq \ell$ and $1 \leq i \leq N-1$}$$
      $$gcd(N,a_{j})=1 \quad \text{for $1 \leq j \leq \ell$.}$$
      
  \end{proof}

  \subsection{General Case}
   Now we consider any divisor $D$ on $X.$ We can take a strong log resolution $\phi:\tX \rightarrow X$ of the pair $(X, D)$. That is, $\phi:\tX \rightarrow X$ is a projective morphism where $\tX$ is smooth, $E= \phi^{*}D_{red}$  is a simple normal crossing divisor, and the induced map $\phi: \tX \backslash E \rightarrow X \backslash D$ is an isomorphism. We have the commutative diagram
$$ \bT \tY \arrow[d, "\tilde{\pi}"] \arrow[r]& Y  \ar[d, "\pi"] \\
  \tX \arrow[r, "\phi"] & X \eT$$
  where $\tY = (Y \times_{X}\tX)_{red}$. Under this construction, we have $\tY = \Spec \bigg(\displaystyle \bigoplus_{i=0}^{N-1}\phi^{*}\cL^{-i}\bigg)$ to be the $N$-fold cyclic covering of  $\phi^{*}D$. Furthermore, if $\varrho: \tX \backslash E \hookrightarrow \tX$ is the natural inclusion, by proper base change, we have the isomorphism
$$\tilde{\pi}_{+}\Q^{H}_{\tY}[n] \cong \Q^{H}_{\tX}[n] \oplus \varrho_{!}\cV,$$
where we identify $\cV \in HM_{\tX \backslash E}(\tX \backslash E, n)$ via the isomorphism $\phi: \tX \backslash E \rightarrow X \backslash D$.
\begin{thm}\label{SmoothDecomp}
    If $Y \rightarrow X$ is this $N$-fold cyclic covering of $D$ and $\phi:\tX \rightarrow X$ is a log resolution of the pair $(X,D)$ with $E= \phi^{*}D_{red}$, then for $1 \leq i \leq N-1$, there exists effective divisors $D^{> 0}_{i}$ and $D^{ \geq0}_{i}$ on $\tX$ such that 
    $$\pi_{*}\OmpY \simeq \Omega^{p}_{X} \oplus \bigoplus^{N-1}_{i=1} \bR \phi_{*}(\Omega^{p}_{\tX}\log(E)\otimes \cO_{\tX}(D^{> 0}_{i}))\otimes \cL^{-i}.$$
    $$\pi_{*}\cD(\underline{\Omega}^{n-p}_{Y}) \simeq \Omega^{p}_{X} \oplus \bigoplus^{N-1}_{i=1} \bR \phi_{*}(\Omega^{p}_{\tX}\log(E)\otimes \cO_{\tX}(D^{ \geq0}_{i}))\otimes \cL^{-i}.$$
   
\end{thm}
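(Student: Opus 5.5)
We reduce to the simple normal crossing case on $\tX$ and push forward along $\phi$. By Theorem \ref{MainThm1} we have $\pi_{+}\Q^{H}_{Y}[n]\simeq \Q^{H}_{X}[n]\oplus j_{!}\cV$. By proper base change the same decomposition holds on $\tX$, namely $\tilde{\pi}_{+}\Q^{H}_{\tY}[n]\cong \Q^{H}_{\tX}[n]\oplus\varrho_{!}\cV$. Since $\phi|_{\tX\backslash E}$ is an isomorphism onto $U$ and $\phi\circ\varrho = j\circ\phi|_{\tX\backslash E}$, with $\phi$ proper so that $\phi_{+}=\phi_{!}$, we get $\phi_{+}\varrho_{!}\cV\simeq j_{!}\cV$. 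Applying Theorem \ref{direct} to $\phi$ gives
$$Gr^{F}_{-p}DR(j_{!}\cV)\simeq \bR\phi_{*}Gr^{F}_{-p}DR(\varrho_{!}\cV).$$
Combined with the second statement of Theorem \ref{MainThm1}, this reduces the first isomorphism to computing $Gr^{F}_{-p}DR(\varrho_{!}\cV)[p-n]$ on $\tX$.

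On $\tX$ the divisor $\phi^{*}D=\sum a_{j}E_{j}$ has simple normal crossing support, and $\tY$ is the $N$-fold cyclic cover of $\phi^{*}D$ defined by $\phi^{*}s\in\Gamma(\tX,\phi^{*}\cL^{N})$. The variation $\cV$ splits on $\tX\backslash E$ as $\bigoplus_{i}\cH_{i}$, with underlying bundles $\phi^{*}\cL^{-i}$ and trivial Hodge filtration. The analysis of the snc subsection then applies verbatim. Twisting by $D^{>0}_{i}:=\sum m_{ij}E_{j}$, with $m_{ij}$ as in (\ref{m_{ij}}), produces the Deligne lattice with residue eigenvalues in $(0,1]$. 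Corollary \ref{LogCor} therefore gives
$$Gr^{F}_{-p}DR(\varrho_{!}\cH_{i})[p-n]\simeq \Omega^{p}_{\tX}(\log E)\otimes\cO_{\tX}(D^{>0}_{i})\otimes\phi^{*}\cL^{-i}.$$
Pushing forward and using the projection formula $\bR\phi_{*}(\cF\otimes\phi^{*}\cL^{-i})\simeq\bR\phi_{*}\cF\otimes\cL^{-i}$ yields the first displayed isomorphism. Here $\underline{\Omega}^{p}_{X}=\Omega^{p}_{X}$ because $X$ is smooth.

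For the dual statement we use duality on $X$, as in the beginning of Section \ref{S3}: $\pi_{*}\cD(\underline{\Omega}^{n-p}_{Y})\simeq\Omega^{p}_{X}\oplus Gr^{F}_{-p}DR(j_{+}\cV)[p-n]$. Since $\phi$ is proper and $\D$ commutes with proper pushforward, we have $\phi_{+}\varrho_{+}\cV\simeq j_{+}\cV$: $j_{+}\cV=\D j_{!}\D\cV\simeq \D\phi_{+}\varrho_{!}\D\cV\simeq\phi_{+}\varrho_{+}\cV$. Theorem \ref{direct} again reduces the computation to $\tX$. There the lattice with eigenvalues in $[0,1)$ is $\cO_{\tX}(D^{\geq0}_{i})\otimes\phi^{*}\cL^{-i}$, where $D^{\geq 0}_{i}=\lfloor \tfrac{i}{N}\phi^{*}D\rfloor$ corresponds to $m'_{ij}$. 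Corollary \ref{LogCor} then gives the $j_{+}$ version, and we push forward and apply the projection formula as before. Both divisors are effective because $a_{j}\ge1$ and $1\le i\le N-1$ force $m_{ij},m'_{ij}\geq0$. When $ia_{j}/N\in\Z$ we have $ia_j/N\ge1$, so $m_{ij}=ia_j/N-1\ge 0$.

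The main obstacle is the compatibility $\phi_{+}\varrho_{!(+)}\cV\simeq j_{!(+)}\cV$ together with the residue computation on $\tX$. One must check that the twisted line bundles are exactly the Deligne lattices $\tilde{\cE}^{>0}$ and $\tilde{\cE}^{\geq0}$ for the summands $\cH_{i}$. The splitting of $\cV$ into the $\cH_{i}$ exists only over $\C$, not over $\Q$. This causes no difficulty, because $Gr^{F}DR$ depends only on the filtered $D$-module. The local residue check uses the formula $\nabla(gt^{i})=(dg+\tfrac iN\tfrac{du}{u}g+\sum_j\tfrac{ia_{j}}{N}\tfrac{dx_{j}}{x_{j}}g)t^{i}$. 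Twisting by $x_{j}^{-m}$ shifts the residue along $E_{j}$ to $\tfrac{ia_{j}}{N}-m$, and the choices $m_{ij},m'_{ij}$ place this residue in $(0,1]$ and $[0,1)$ respectively.
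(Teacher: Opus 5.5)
Your proposal is correct and follows essentially the same route as the paper. You identify $j_{!}\cV\simeq\phi_{+}\varrho_{!}\cV$ and $j_{+}\cV\simeq\phi_{+}\varrho_{+}\cV$, reduce to $\tX$ via Theorem \ref{direct}, recognize $\cO_{\tX}(D^{>0}_{i})\otimes\phi^{*}\cL^{-i}$ and $\cO_{\tX}(D^{\geq 0}_{i})\otimes\phi^{*}\cL^{-i}$ as the Deligne lattices with residues in $(0,1]$ and $[0,1)$, apply Corollary \ref{LogCor}, and push forward with the projection formula. Your explicit residue computation and effectivity check fill in details the paper leaves implicit, and you correctly use $[0,1)$ in the $\varrho_{+}$ case, where the paper's proof text says $(0,1]$.
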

\begin{proof}
For the extension  $\varrho_{!}\cV$, the differential map $\cO_{\tY} \rightarrow \Omega^{1}_{\tY}$ induces a logarithmic connection
$$\phi^{*}\cL^{-i} \rightarrow \Omega^{1}_{\tX}(\log E) \otimes \phi^{*}\cL^{-i} \quad \text{for $1 \leq i \leq N-1.$}$$
After a possible twist by $\cO_{\tX}(D^{> 0}_{i})$, with $D^{> 0}_{i}$ constructed by the values \ref{m_{ij}},  the eigenvalues of $res \nabla$ along the irreducible components of $E$ are contained in $(0,1]$ for the logarithmic connection
$$\phi^{*}\cL^{-i}\otimes \cO_{\tX}(D^{> 0}_{i}) \rightarrow \Omega^{1}_{\tX}(\log E) \otimes \phi^{*}\cL^{-i} \otimes \cO_{\tX}(D^{> 0}_{i}). $$
 By Corollary \ref{LogCor}, 
$$ Gr^{F}_{-p}DR(\varrho_{!}\cV) \simeq \displaystyle \bigoplus_{i=1}^{N-1}\Omega^{p}_{\tX}(\log E) \otimes \phi^{*}\cL^{-i} \otimes \cO_{\tX}(D^{>0}_{i})[n-p].$$
Therefore, because the induced map $\phi: \tX \backslash E \rightarrow X \backslash D$ is an isomorphism, there are quasi-isomorphisms
$$Gr^{F}_{-p}DR(j_{!}\cV) \simeq  \bR \phi_{*} Gr^{F}_{-p}DR(\varrho_{!}\cV) \simeq \displaystyle \bigoplus_{i=1}^{N-1} \bR \phi_{*}(\Omega^{p}_{\tX}\log(E)\otimes \cO_{\tX}(D^{> 0}_{i}))\otimes \cL^{-i}[n-p].$$
Hence, we obtain the quasi-isomorphism
$$\pi_{*}\OmpY \simeq \Omega^{p}_{X} \oplus \bigoplus^{N-1}_{i=1} \bR \phi_{*}(\Omega^{p}_{\tX}\log(E)\otimes \cO_{\tX}(D^{>0}_{i}))\otimes \cL^{-i}.$$
We also have the decomposition
$$\pi_{+}\D(\Q^{H}_{Y}[n])(-n) \simeq \Q^{H}_{X}[n] \oplus j_{+}\cV.$$
Similarly, for $\tX$, we have 
$$\tilde{\pi}_{+}(\D(\Q^{H}_{\tY}[n])(-n)) \simeq \Q^{H}_{\tX}[n] \oplus \varrho_{+}\cV.$$
For the extension  $\varrho_{+}\cV$, after a possible twist by $\cO_{\tX}(D^{ \geq 0}_{i})$, with $D^{\geq 0}_{i}$ constructed by the values \ref{m'_{ij}}, the eigenvalues of $res \nabla$ along the irreducible components of $E$ are contained in $(0,1]$ for the logarithmic connection
$$\phi^{*}\cL^{-i}\otimes \cO_{\tX}(D^{\geq 0}_{i}) \rightarrow \Omega^{1}_{\tX}(\log E) \otimes \phi^{*}\cL^{-i} \otimes \cO_{\tX}(D^{\geq 0}_{i})$$
By Corollary \ref{LogCor}, there are quasi-isomorphisms
$$Gr^{F}_{-p}DR(j_{+}\cV) \simeq \bR \phi_{*} Gr^{F}_{-p}DR(\varrho_{+}\cV) \simeq \displaystyle \bigoplus_{i=1}^{N-1} \bR \phi_{*}(\Omega^{p}_{\tX}\log(E)\otimes \cO_{\tX}(D^{\geq 0}_{i}))\otimes \cL^{-i}[n-p].$$
Hence, we obtain the quasi-isomorphism
$$\pi_{*}\cD(\OmpY) \simeq \Omega^{n-p}_{X} \oplus \bigoplus^{N-1}_{i=1} \bR \phi_{*}(\Omega^{n-p}_{\tX}\log(E)\otimes \cO_{\tX}(D^{\geq 0}_{i}))\otimes \cL^{-i}.$$
\end{proof}

\begin{rmk}
If $\phi:\tX \rightarrow X$ is a strong log resolution of the pair $(X,D)$ given in the theorem above, with  $\phi^{*}(D)= \displaystyle \sum a_{j}E_{j},$ where $E_{j}$ are the irreducible components of the divisor $E = \phi^{*}D$, then the divisors $D^{> 0}_{i}$ and $D^{ \geq0}_{i}$ are constructed in the following way, 
$$D^{\geq 0}_{i} = \displaystyle \sum \bigg\lfloor \frac{ia_{j}}{N} \bigg \rfloor E_{j}:= \displaystyle
 \bigg\lfloor \frac{i}{N}\phi^{*}D \bigg\rfloor$$
 
 $$D^{> 0}_{i} = \displaystyle \sum b_{j}E_{j} \quad  \text{where   $b_{j} =$}\begin{cases}
    \displaystyle\bigg\lfloor \frac{ia_{j}}{N} \bigg \rfloor & \text{ if  $\displaystyle\frac{ia_{j}}{N} \notin \Z$}\\ \\
     \displaystyle \frac{ia_{j}}{N} -1 & \text{ if  $\displaystyle \frac{ia_{j}}{N} \in \Z$}
 \end{cases}$$
 \end{rmk}

 \begin{rmk}\label{welldefined}
The complexes $\bR \phi_{*}(\Omega^{p}_{\tX}\log(E)\otimes \cO_{\tX}(D^{> 0}_{i}))$ and $\bR \phi_{*}(\Omega^{p}_{\tX}\log(E)\otimes \cO_{\tX}(D^{\geq 0}_{i}))$ constructed in Theorem \ref{SmoothDecomp} are independent of the strong log resolution (up to quasi-isomorphism) because the complexes $Gr^{F}_{-p}DR(j_{!}\cV)$ and $Gr^{F}_{-p}DR(j_{+}\cV)$ are independent of the strong log resolution. As filtered $D_{U}$-modules, there is a splitting $\cV \cong \displaystyle \bigoplus_{i=1}^{N-1}\cH_{i}$ and there are quasi-isomorphims
$$Gr^{F}_{-p}DR(j_{!}\cH_{i})[p-n] \simeq \bR \phi_{*}(\Omega^{p}_{\tX}\log(E)\otimes \cO_{\tX}(D^{> 0}_{i})) \otimes \cL^{-i}$$
$$Gr^{F}_{-p}DR(j_{+}\cH_{i})[p-n] \simeq \bR \phi_{*}(\Omega^{p}_{\tX}\log(E)\otimes \cO_{\tX}(D^{\geq 0}_{i})) \otimes \cL^{-i}$$
 \end{rmk}

 \begin{cor}\label{Vanishing}
            Let $\pi:Y \rightarrow X$ be this $N$-fold cyclic covering of $D$. If $\phi:\tX \rightarrow X$ is a strong log resolution of the pair $(X,D)$ with divisors  $D^{> 0}_{i}$ and $D^{ \geq0}_{i}$ on $\tX$ given above, then for $1 \leq i \leq N-1$ and $p+q>n,$
            $$R^{q}\phi_{*}(\Omega^{p}_{\tX}\log(E) \otimes \cO_{\tX}(D^{> 0}_{i})) = 0 \quad \text{and} \quad R^{q}\phi_{*}(\Omega^{p}_{\tX}\log(E) \otimes \cO_{\tX}(D^{\geq 0}_{i})) = 0.$$
         Alternatively, for $1 \leq i \leq N-1$ and $p+q>n,$
            $$R^{q}\phi_{*}(\Omega^{p}_{\tX}\log(E)(-E) \otimes \cO_{\tX}(-D^{> 0}_{i})) = 0 \quad \text{and} \quad R^{q}\phi_{*}(\Omega^{p}_{\tX}\log(E)(-E) \otimes \cO_{\tX}(-D^{\geq 0}_{i})) = 0.$$
            In particular, for $1 \leq i \leq N-1$ and $q >0,$
            $$R^{q}\phi_{*}(\omega_{\tX}\otimes \cO_{\tX}(-D^{> 0}_{i})) = 0 \quad \text{and} \quad R^{q}\phi_{*}(\omega_{\tX} \otimes \cO_{\tX}(-D^{\geq 0}_{i})) = 0.$$

            \end{cor}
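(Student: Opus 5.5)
Since $\pi:Y\to X$ is finite, the vanishing comes from the fact that $\pi_{*}\OmpY$ and $\pi_{*}\cD(\underline{\Omega}^{n-p}_{Y})$ are complexes with bounded cohomological amplitude. The plan is to use the decompositions of Theorem \ref{SmoothDecomp} together with the splitting of Remark \ref{welldefined} by eigensheaves. For the first pair of vanishings, take cohomology in degree $q$ on both sides. This shows that $R^{q}\phi_{*}(\Omega^{p}_{\tX}\log(E)\otimes\cO_{\tX}(D^{>0}_{i}))\otimes\cL^{-i}$ is a direct summand of $\cH^{q}(\pi_{*}\OmpY)=\pi_{*}\cH^{q}(\OmpY)$; here we use that $\pi$ is affine, so $\pi_{*}$ is exact. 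Similarly, the $\geq 0$ version is a summand of $\pi_{*}\cH^{q}(\cD(\underline{\Omega}^{n-p}_{Y}))$. The line bundle $\cL^{-i}$ can then be stripped off.

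It therefore suffices to prove the following known vanishings for an $n$-dimensional variety $Y$. First, $\cH^{q}(\OmpY)=0$ for $p+q>n$. This is the standard Du Bois complex vanishing of Guillén--Navarro Aznar--Puerta--Peral/Steenbrink, which follows from $\dim X_{k}\le n-k$ for a cubical hyperresolution. Alternatively it follows from $\OmpY\simeq Gr^{F}_{-p}DR(\Q^{H}_{Y}[n])[p-n]$ and the fact that $\Q^{H}_{Y}[n]$ lies in perverse degrees $\le 0$, which bounds the amplitude of $Gr^{F}DR$. Second, $\cH^{q}(\cD(\underline{\Omega}^{n-p}_{Y}))=0$ for $p+q>n$. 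For this I would use $\cD(Gr^{F}_{-p}DR(\cM))[n]\simeq Gr^{F}_{p}DR(\D\cM)$. Since $\D(\Q^{H}_{Y}[n])$ has perverse cohomology in degrees $\ge 0$, the graded de Rham complexes of a mixed Hodge module in $MHM(Y)$ are concentrated in degrees $[-n,0]$; more precisely, $Gr^{F}_{-p}DR(\cM)$ lives in degrees $[-p,0]$ for the relevant range. A spectral sequence argument over the perverse cohomologies then gives the bound. On the smooth ambient $X$ it is easier still: by Theorem \ref{SmoothDecomp} and Remark \ref{welldefined}, the second complex is $Gr^{F}_{-p}DR(j_{+}\cH_i)[p-n]$. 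Here $j_{+}\cH_i$ is a single mixed Hodge module, because $j$ is an affine open immersion (the complement of a Cartier divisor), so $j_{+}$ and $j_{!}$ are exact. For a single module $\cM$ on smooth $X$, $Gr^{F}_{-p}DR(\cM)$ is a complex of coherent sheaves placed in degrees $[-n,0]$. Hence $Gr^{F}_{-p}DR(\cM)[p-n]$ has cohomology only in degrees $\le n-p$, which is exactly the vanishing for $q>n-p$. The same argument applies to $j_{!}\cH_i$. This module-level argument is the route I would actually write, since it treats both cases uniformly and avoids any discussion of the Du Bois complex.

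For the alternative form, apply Grothendieck duality for the proper map $\phi$: $\cD_X(\bR\phi_{*}\cF)\simeq \bR\phi_{*}\bR\cH om(\cF,\omega_{\tX}[n])$ with $\cF=\Omega^{p}_{\tX}(\log E)\otimes\cO(D)$. The dual of $\Omega^{p}_{\tX}(\log E)$ is $\Omega^{n-p}_{\tX}(\log E)(-E)$, using the perfect pairing into $\Omega^{n}(\log E)=\omega_{\tX}(E)$. Hence $\bR\phi_{*}(\Omega^{n-p}_{\tX}\log(E)(-E)\otimes\cO(-D))[n]\simeq\cD(\bR\phi_{*}(\Omega^{p}_{\tX}\log(E)\otimes\cO(D)))$. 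Conceptually, this says that duality exchanges the $j_!$ and $j_+$ summands, since $\D(j_{!}\cH_i)=j_{+}\D\cH_i$ and $\D\cH_i$ is the conjugate $\cH_{N-i}$ up to a Tate twist. Then I would apply the same amplitude argument to the dual modules $\D(j_{!}\cH_i)$ and $\D(j_{+}\cH_i)$, which are again single mixed Hodge modules, via $\cD(Gr^{F}_{-p}DR\cM)[n]\simeq Gr^{F}_{p}DR(\D\cM)$.

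The main obstacle is the bookkeeping in this step. The twisting divisor must match exactly: in the Hodge-module computation the dual naturally involves $D^{\ge0}_{N-i}$ or $D^{>0}_{N-i}$ relative to $\phi^{*}\cL^{-(N-i)}$. To turn this into $\cO(-D^{>0}_i)$ versus $\cO(-D^{\ge0}_i)$ one needs the identities $\lfloor x\rfloor+\lceil -x\rceil=0$ together with $\phi^{*}\cL^{N}=\cO(\phi^{*}D)$. An honest treatment has to track this. The particular case $p=n$ gives $\Omega^{n}_{\tX}(\log E)(-E)=\omega_{\tX}$ and the range $q>0$, which is the stated Grauert–Riemenschneider type consequence.
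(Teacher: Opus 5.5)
Your module-level route is essentially the paper's proof. The twisted pushforwards are, up to shift and $\otimes\cL^{-i}$, summands of $Gr^{F}_{-p}DR(j_{!}\cV)$ and $Gr^{F}_{-p}DR(j_{+}\cV)$ with $j_{!}\cV$, $j_{+}\cV$ single mixed Hodge modules; your justification via exactness of $j_{!},j_{+}$ for the affine immersion $j$ improves on the paper's bare assertion. The $(-E)$-forms come from duality exchanging $j_{!}$ and $j_{+}$, and this needs none of the twist bookkeeping you call the main obstacle, since $\D(j_{!}\cV)\simeq j_{+}\cV(n)$ is again a single module. Your identities do in fact give $\cO_{\tX}(-E-D^{>0}_{i})\cong\cO_{\tX}(D^{\geq 0}_{N-i})\otimes\phi^{*}\cL^{-N}$ and $\cO_{\tX}(-E-D^{\geq 0}_{i})\cong\cO_{\tX}(D^{>0}_{N-i})\otimes\phi^{*}\cL^{-N}$, so the second pair even follows from the first by the projection formula alone.

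Your reduction of the $D^{>0}_{i}$ case to the Du Bois vanishing $\cH^{q}(\underline{\Omega}^{p}_{Y})=0$ for $p+q>n$ is fine. However, discard the claim that $\cH^{q}(\cD(\underline{\Omega}^{n-p}_{Y}))=0$ for $p+q>n$ on every $n$-dimensional $Y$. Perverse amplitude $\geq 0$ of $\D(\Q^{H}_{Y}[n])$ gives no upper bound on its graded de Rham complexes: for two planes in $\C^{4}$ meeting at a point, $\cH^{1}(\cD(\underline{\Omega}^{0}_{Y}))\neq 0$. The claim does hold in this setting, because $Y$ is a hypersurface in the total space of $\cL$ and so has $\lcdef(Y)=0$.
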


            \begin{proof}
               We prove the statement for the divisors $D^{>0}_{i};$ the case for the divisors $D^{\geq0}_{i}$ is done in a similar fashion.
               
            From the previous theorem,
            $$Gr^{F}_{-p}DR(j_{!}\cV) \simeq \displaystyle \bigoplus_{i=1}^{N-1} \bR \phi_{*}(\Omega^{p}_{\tX}\log(E)\otimes \cO_{\tX}(D^{> 0}_{i}))\otimes \cL^{-i}[n-p].$$
            Since $j_{!}\cV$ is a mixed Hodge module on $X$, we have $\cH^{i}(Gr^{F}_{-p}DR(j_{!}\cV)) = 0$ for $i >0.$ From the quasi-isomorphism above, we have the first vanishing result.

            There are also  quasi-isomorphisms
    $$Gr^{F}_{p-n}DR(\varrho_{!}\cV) \simeq \displaystyle \bigoplus_{i=1}^{N-1}  \Omega^{n-p}_{\tX}\log(E) \otimes\cO_{\tX}(D^{> 0}_{i})\otimes \phi^{*}\cL^{-i}[p],$$
    $$\bR \cH om_{\cO_{\tX}}\bigg(\displaystyle \bigoplus_{i=1}^{N-1}\Omega^{n-p}_{\tX}\log(E) \otimes\cO_{\tX}(D^{> 0}_{i})\otimes \phi^{*}\cL^{-i}, \omega_{\tX}\bigg) =\cD(Gr^{F}_{p-n}DR(\varrho_{!}\cV))[p] \simeq Gr^{F}_{-p}DR(\varrho_{+}\cV)[p-n].$$
    Therefore,
    $$\displaystyle \bigoplus_{i=1}^{N-1} R^{q}\phi_{*}(\Omega^{p}_{\tX}\log(E)(-E)\otimes  \cO_{\tX}(-D^{> 0}_{i}))\otimes \cL^{i} \cong R^{q+p-n}\phi_{*}Gr^{F}_{-p}DR(\varrho_{+}\cV) $$
    $$\cong \cH^{q+p-n}\bigg(Gr^{F}_{-n}DR(\phi_{+}\varrho_{+}\cV)\bigg) \cong \cH^{q+p-n}\bigg(Gr^{F}_{-n}DR(j_{+}\cV)\bigg).$$
    Since $j_{+}\cV$ is a mixed Hodge module on $X$, $\cH^{q+p-n}\bigg(Gr^{F}_{-n}DR(j_{+}\cV)\bigg)=0$ for $q+p-n>0.$ This gives us the second vanishing result.
            \end{proof}

            \begin{rmk}
                A similar vanishing result was also given by Chen and Musta\c{t}\u{a} \cite{chenMustata}.
            \end{rmk}

\begin{cor}\label{singCor}
    With the same notation as the previous theorem, we have
    \begin{enumerate}
        \item $Y$ has Du Bois singularities if and only if for $1 \leq i \leq N-1$ the natural map $\cO_{X} \rightarrow  \bR \phi_{*}\cO_{\tX}(D^{>0}_{i})$ is a quasi-isomorphism.\\
        
        \item $Y$ has rational singularities if and only if for $1 \leq i \leq N-1$ the natural map $\cO_{X} \rightarrow \bR \phi_{*}\cO_{\tX}(D^{\geq 0}_{i})$ is a quasi-isomorphism. \\

        \item If $\cO_{\tX}(D^{>0}_i) = \cO_{\tX}(D^{\geq 0}_{i})$ for $1 \leq i \leq N-1$, then $\Q^{H}_{Y}[n] \cong IC^{H}_{Y}.$
    \end{enumerate}
\end{cor}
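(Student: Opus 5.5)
The plan is to derive all three statements from the $p=0$ case of Theorem \ref{SmoothDecomp}, using that $\pi$ is finite and affine. For (1) and (2), the main work is checking that the canonical morphisms $\cO_Y \to \OmY$ and $\cO_Y \to \bR\varphi_*\cO_{\tY}$ respect the splittings in that theorem.

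\emph{Step 1 (eigen-decomposition).} The group $\mu_N$ acts on $Y$ over $X$, so it acts on $\pi_*\cO_Y \cong \cO_X \oplus \bigoplus_{i=1}^{N-1}\cL^{-i}$. The summand $\cL^{-i}$ is the $\zeta^i$-eigensheaf. By functoriality the action extends to $\pi_+\Q^H_Y[n]$ and $\pi_+\D(\Q^H_Y[n])$. On $U$, where $\pi$ is étale, the decomposition $\cV\cong\bigoplus_i \cH_i$ of Remark \ref{welldefined} is exactly the eigen-decomposition. Since $j_!$, $j_+$ and $Gr^F_{-p}DR$ are additive functors, the decompositions
$$\pi_*\OmY \simeq \cO_X\oplus\bigoplus_i \bR\phi_*\cO_{\tX}(D^{>0}_i)\otimes\cL^{-i},\qquad \pi_*\cD(\underline{\Omega}^n_Y)\simeq \cO_X\oplus\bigoplus_i \bR\phi_*\cO_{\tX}(D^{\ge0}_i)\otimes\cL^{-i}$$
are eigen-decompositions; here I use $\Omega^0_{\tX}(\log E)=\cO_{\tX}$. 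The canonical maps $\pi_*\cO_Y\to\pi_*\OmY$ and $\pi_*\cO_Y\to\pi_*\cD(\underline{\Omega}^n_Y)\simeq\pi_*\bR\varphi_*\cO_{\tY}$ are $\mu_N$-equivariant. Hence each splits as a direct sum of maps between eigencomponents:
\begin{itemize}
\item on the invariant part, the identity $\cO_X\to\cO_X$, since $X$ is smooth, so $\underline{\Omega}^0_X=\cO_X$ and $X$ is rational;
\item for each $1\le i\le N-1$, a map $\cL^{-i}\to\bR\phi_*\cO_{\tX}(D_i^{>0})\otimes\cL^{-i}$, respectively the analogous map with $D^{\ge0}_i$.
\end{itemize}

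\emph{Step 2 (identifying the components).} The $i$-th component is determined by its $\cH^0$, because the source is a sheaf in degree $0$. Its $\cH^0$ lands in $\phi_*\cO_{\tX}(D_i^{>0})\otimes\cL^{-i}$, which is torsion free and embeds in $j_*(\cL^{-i}|_U)$. Over $U$ all these identifications are the identity. Therefore the map is the natural inclusion $\cO_X\to\phi_*\cO_{\tX}(D^{>0}_i)$, which makes sense since $D^{>0}_i$ is effective, composed into $\bR\phi_*$, then twisted by $\cL^{-i}$.

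Since $\pi$ is affine, $\cO_Y\to\OmY$ is a quasi-isomorphism if and only if $\pi_*$ of it is. By Step 1 this holds if and only if each $\cO_X\to\bR\phi_*\cO_{\tX}(D^{>0}_i)$ is a quasi-isomorphism (twisting by the invertible $\cL^{-i}$ does not matter), which proves (1). Statement (2) follows in the same way, using $\cD(\underline{\Omega}^n_Y)\simeq\bR\varphi_*\cO_{\tY}$ as recalled in the introduction. I expect Step 2, the identification of the natural map with the summand-wise inclusions, to be the main obstacle. I would handle it as above, via equivariance plus torsion-freeness of $\cH^0$.

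\emph{Step 3 (part (3)).} Suppose $\cO_{\tX}(D^{>0}_i)=\cO_{\tX}(D^{\ge0}_i)$ for all $i$. Both are effective divisors supported on $E$, so they are equal, i.e. $m_{ij}=m'_{ij}$ for every component $E_j$. Then the lattices with residue eigenvalues in $(0,1]$ and in $[0,1)$ coincide, so the residues have no integer eigenvalues. Corollary \ref{LogCor} and Proposition \ref{sncThm} applied on $\tX$ (equivalently, $\gcd(N,a_j)=1$) give $\varrho_!\cV\cong\varrho_+\cV$. Applying $\phi_+$ gives $j_!\cV\cong j_+\cV$, with the map being the natural one.

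Hence, by the two decompositions recalled in the proof of Proposition \ref{sncThm}, $\pi_+$ of the natural morphism $\Q^H_Y[n]\to IC^H_Y\to\D(\Q^H_Y[n])(-n)$ is an isomorphism. Since $\pi$ is finite, $rat\circ\pi_+=\pi_*\circ rat$, and $\pi_*$ is conservative on constructible complexes. Therefore $\Q^H_Y[n]\to IC^H_Y$ is already an isomorphism, which is (3).
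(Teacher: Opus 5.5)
Your proposal is correct and follows the same route as the paper. Parts (1) and (2) come from the $p=0$ case of Theorem~\ref{SmoothDecomp} together with affineness of $\pi$, and part (3) from $j_!\cV\cong j_+\cV$; your $\mu_N$-equivariance and torsion-freeness argument in Step~2 supplies the compatibility of the natural maps with the splitting, which the paper only asserts.

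One small correction in Step~3. The hypothesis must be read as equality of subsheaves of the sheaf of rational functions, i.e.\ $D^{>0}_i=D^{\geq 0}_i$. Your stated reason (``both are effective and supported on $E$, hence equal'') does not work if the hypothesis is read as an isomorphism of line bundles. For example, for $t^2=x^2$ over $\mathbb{A}^2$ one has $\cO(D^{\geq0}_1)=\cO(\{x=0\})\cong\cO=\cO(D^{>0}_1)$, although $Y$ is not a rational homology manifold.
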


\begin{proof}
\begin{enumerate}
 \item If $Y \rightarrow X$ is this $N$-fold cyclic covering of $D$, then we have a decomposition
 $$\pi_{*}\cO_{Y} \cong \cO_{X} \oplus \bigoplus_{i=1}^{N-1}\cL^{-i}.$$
 The variety $Y$ has Du Bois singularities if and only if the natural map $\cO_{Y} \rightarrow \underline{\Omega}^{0}_{Y} $ is a quasi-isomorphism. Since $\pi:Y \rightarrow X$ is affine, $Y$ has Du Bois singularities if and only if the natural map
 $$\pi_{*}\cO_{Y}  \rightarrow \pi_{*}\underline{\Omega}^{0}_{Y} $$
 is a quasi-isomorphism. By the previous theorem, $Y$ has Du Bois singularities if and only if the natural map
 $$ \cO_{X} \oplus \bigoplus_{i=1}^{N-1}\cL^{-i} \cong \pi_{*}\cO_{Y} \rightarrow \pi_{*}\underline{\Omega}^{0}_{Y} \simeq \cO_{X} \oplus \bigoplus_{i=1}^{N-1} \bR \phi_{*}\cO_{\tX}(D^{>0}_{i}) \otimes \cL^{-i}$$
 is a quasi-isomorphism.
 Thus, $Y$ has Du Bois singularities if and only if the natural map $ \cO_{X} \rightarrow \bR \phi_{*}\cO_{\tX}(D^{>0}_{i})$ is a quasi-isomorphism for $1 \leq i \leq N-1.$

 \item If $f:Y' \rightarrow Y$ is a resolution of singularities, then $\cD(\underline{\Omega}^{n}_{Y}) \simeq \bR f_{*}\cO_{Y'}.$ Therefore, $Y$ has rational singularities if and only if $$ \cO_{X} \oplus \bigoplus_{i=1}^{N-1}\cL^{-i} \cong \pi_{*}\cO_{Y} \rightarrow \pi_{*}\cD(\underline{\Omega}^{n}_{Y}) $$
 is a quasi-isomorphism. Using a similar argument as the previous case, $Y$ has rational singularities if and only if  the natural map $ \cO_{X} \rightarrow \bR \phi_{*}\cO_{\tX}(D^{\geq0}_{i})$ is a quasi-isomorphism for $1 \leq i \leq N-1.$

 \item  Since $X$ is smooth, we have the isomorphisms
$$\pi_{+}IC^{H}_{Y} \cong  \Q^{H}_{X}[n] \oplus IC_{X}(\cV)  \cong  \Q^{H}_{X}[n] \oplus  \im [j_{!}\cV \rightarrow j_{+}\cV].$$
If $\cO_{\tX}(D^{>0}_{i}) = \cO_{\tX}(D^{\geq 0}_{i})$, then the map $j_{!}\cV \rightarrow j_{+}\cV$ is an isomorphism. Thus $\pi_{+}IC^{H}_{Y} \cong \pi_{+}\Q^{H}_{Y}[n]$, which implies $IC^{H}_{Y} \cong \Q^{H}_{Y}[n]$.

 \end{enumerate}

\end{proof}
\begin{ex}
    
    Consider the $2$-fold cyclic covering 
    $$Y = \Spec \bigg( \C[x, y, t]/\langle t^{2}-(x^3 - y^6)\rangle \bigg)\rightarrow \Spec\bigg(\C[x,y]\bigg)=X$$
    with $D = \{x^3-y^6 = 0\}$. Let $\phi_{1}:X_{1} \rightarrow X$ be the blow-up of the origin. After a second blow-up, $$\phi:\tX=X_{2} \xrightarrow{\phi_{2}} X_{1} \xrightarrow{\phi_{1}} X,$$
    $$\phi^{*}(\cO_{X}(D)) \cong \cO_{X_{2}}(3E_{1}) \otimes \cO_{X_{2}}(6E_{2})\otimes \cO_{X_{2}}(C_{1}) \otimes \cO_{X_{2}}(C_{2}) \otimes \cO_{X_{2}}(C_{3})$$
    where $E_{2} = \mathbb{P}_{1}$ is the exceptional divisor of $\phi_{2}$, $C_{i}$ are the smooth irreducible components of the strict transform of $D$, and $E_{1}$ is the strick transform of the exceptional divisor of $\phi_{1}.$ The divisor $E=3E_{1} + 6E_{2} + C_{1} + C_{2} + C_{3}$ has simple normal crossing support, and the induced map $\phi: \tX\backslash E \rightarrow X \backslash D$ is an isomorphism. For the logarithmic connection
    $$\phi^{*}\cL^{-1} \rightarrow \Omega^{1}_{\tX}(\log E) \otimes \cL^{-1}$$
    to have eigenvalues of $res \nabla$ along the irreducible components  to be between $(0,1]$, we need to twist the connection by
    $$\cO_{\tX}(D^{>0}) = \cO_{\tX}(E_{1}) \otimes \cO_{\tX}(2E_{2}).$$
    For the logarithmic connection to have eigenvalues of $res \nabla$ along the irreducible components between $[0,1)$, we need to twist the connection by the divisor
    $$\cO_{\tX}(D^{\geq 0})= \cO_{\tX}(E_{1}) \otimes \cO_{\tX}(3E_{2}).$$
    There are quasi-isomorphisms
    $$\pi_{*}\OmpY  \simeq  \Omega^{p}_{X} \oplus \bR \phi_{*}(\Omega^{p}_{\tX}\log(E)\otimes \cO_{\tX}(D^{>0}))\otimes \cL^{-1} \quad \text{for $0 \leq p \leq 2$.}$$
    
    $$ \pi_{*}\cD(\underline{\Omega}^{n-p}_{Y}) \simeq  \Omega^{p}_{X} \oplus \bR \phi_{*}(\Omega^{p}_{\tX}\log(E)\otimes \cO_{\tX}(D^{\geq 0}))\otimes \cL^{-1} \quad \text{for $0 \leq p \leq 2$.}$$
If $\omega_{\tX}$ and $\omega_{X}$ are the canonical bundles for $\tX$ and $X$, respectively, then 
    $$\omega_{\tX} \cong \phi^{*}\omega_{X} \otimes \cO_{\tX}(E_{1}) \otimes \cO_{\tX}(2E_{2}) \cong \phi^{*}\omega_{X} \otimes \cO_{\tX}(D^{>0}).$$
   We have
        $$\bR \phi_{*}(\cO_{\tX}(D^{>0})) = \bR \phi_{*}\bigg( \omega_{\tX} \otimes \phi^{*}\omega^{-1}_{X}\bigg) \simeq \bR \phi_{*}\omega_{\tX} \otimes \omega^{-1}_{X} \simeq \cO_{X}.$$
        By the previous corollary, $Y$ has Du Bois singularities. However, $Y$ does not have rational singularities. Indeed, we have
$$\cO_{\tX}(D^{\geq 0}) = \cO_{\tX}(D^{>0}) \otimes \cO_{X_{2}}(E_{2}).$$
Consider the following isomorphism,
$$\omega_{\tX} \otimes \phi^{*}\omega^{-1}_{X} \otimes \cO_{\tX}(E_{2}) \cong  \cO_{\tX}(D^{\geq 0}).$$
Also, consider the short exact sequence
$$0 \rightarrow \cO_{\tX}(-E_{2}) \rightarrow \cO_{\tX} \rightarrow \cO_{E_{2}} \rightarrow 0.$$
If we tensor the previous short exact sequence by $\cO_{\tX}(D^{\geq 0})$, we obtain
$$0 \rightarrow \omega_{\tX} \otimes \phi^{*}\omega^{-1}_{X} \rightarrow \cO_{\tX}(D^{\geq 0}) \rightarrow \omega_{E_{2}}\otimes \phi^{*}\omega^{-1}_{X}\rightarrow 0,$$
where $\omega_{E_{2}}$ is the canonical bundle on $E_{2}.$ If we apply $\bR \phi_{*}$ to this short exact, we obtain the exact triangle
$$\bT  \cO_{X} \arrow[r] & \bR \phi_{*}\cO_{\tX}(D^{\geq 0}) \arrow[r] & \bR \phi_{*}\omega_{E_{2}}\otimes \omega^{-1}_{X} \arrow[r,"+1"] & \hfill \eT$$
Taking cohomology, we obtain the long exact sequence
$$0 \rightarrow \cO_{X} \rightarrow \phi_{*}\cO_{\tX}(D^{\geq 0}) \rightarrow \phi_{*}\omega_{E_{2}}\otimes \omega^{-1}_{X} \rightarrow 0 \rightarrow R^{1}\phi_{*}\cO_{\tX}(D^{\geq 0}) \rightarrow R^{1}\phi_{*}\omega_{E_{2}}\otimes \omega^{-1}_{X} \rightarrow 0.$$
Since $E_{2} = \mathbb{P}^1$,  $\phi_{*}\omega_{E_{2}} = 0.$ The isomorphism $\cO_{X} \cong \phi_{*}\cO_{\tX}(D^{\geq 0})$ implies $Y$ is normal. We also have $R^{1}\phi_{*}\omega_{E_{2}} \neq 0$. The isomorphism $R^{1}\phi_{*}\cO_{\tX}(D^{\geq 0}) \cong  R^{1}\phi_{*}\omega_{E_{2}}\otimes\omega^{-1}_{X} \neq 0$ implies $Y$ does not have rational singularities.
\end{ex}

\begin{rmk}
    It is well known that the variety $Y = \Spec \bigg( \C[x, y, t]/\langle t^{2}-(x^3 - y^6)\rangle \bigg)$ has Du Bois singularities and does not have rational singularities because the minimal exponent of the quasi-homogeneous polynomial $t^2 - (x^3 - y^6)$ is given by 
    $$\displaystyle \frac{1}{2}+ \frac{1}{3} + \frac{1}{6}=1 \quad \text{\cite{Saito-DB}.}$$
    
\end{rmk}

\begin{ex}
     For this example, consider the $7$-fold cyclic covering 
    $$Y = \Spec \bigg( \C[x, y, t]/\langle t^{7}-(x^2 - y^3)\rangle \bigg)\rightarrow \Spec\bigg(\C[x,y]\bigg)=X$$
    with $D = \{x^2-y^3 = 0\}$. It is well known that if we take three consecutive blow-ups,
    $$\phi: \tX =  X_{3} \xrightarrow{\phi_{3}} X_{2} \xrightarrow{\phi_{2}} X_{1} \xrightarrow{\phi_{1}} X$$
    the induced map $\phi: \tX\backslash E \rightarrow X \backslash D$ is an isomorphism, where $E = \phi^{*}D =2E_{1} + 3E_{2} + 6E_{3} +C$ has simple normal crossing support. The divisors $E_{i}$ are the exceptional divisors, and $C$ is the strict transform of $D.$ 
     For the logarithmic connection
    $$\phi^{*}\cL^{-i} \rightarrow \Omega^{1}_{\tX}(\log E) \otimes \cL^{-i} $$
    to have eigenvalues of $res \nabla$ along the irreducible components  between $(0,1]$, or between $[0,1)$, we need to twist the connection by the following respective line bundles
    $$\cO_{\tX}(D^{\geq 0}_{1}) = \cO_{\tX}(D^{>0}_{1}) =\cO_{\tX}$$
    $$\cO_{\tX}(D^{\geq 0}_{2}) = \cO_{\tX}(D^{>0}_{2}) =\cO_{\tX}(E_{3})$$
    $$\cO_{\tX}(D^{\geq 0}_{3}) = \cO_{\tX}(D^{>0}_{3}) =\cO_{\tX}(E_{2}) \otimes \cO_{\tX}(2E_{3})$$
    $$\cO_{\tX}(D^{\geq0}_{4}) = \cO_{\tX}(D^{>0}_{4}) =\cO_{\tX}(E_{1}) \otimes \cO_{\tX}(E_{2}) \otimes \cO_{\tX}(3E_{3})$$
    $$\cO_{\tX}(D^{\geq0}_{5}) = \cO_{\tX}(D^{>0}_{5}) =\cO_{\tX}(E_{1}) \otimes \cO_{\tX}(2E_{2}) \otimes \cO_{\tX}(4E_{3})$$
    $$ \cO_{\tX}(D^{\geq 0}_{6}) = \cO_{\tX}(D^{>0}_{6}) =\cO_{\tX}(E_{1}) \otimes \cO_{\tX}(2E_{2}) \otimes \cO_{\tX}(5E_{3}).$$
   If $\omega_{\tX}$ and $\omega_{X}$ are the canonical bundles for $\tX$ and $X$, respectively, then 
    $$\omega_{\tX} \cong \phi^{*}\omega_{X} \otimes \cO_{\tX}(E_{1}) \otimes \cO_{\tX}(2E_{2}) \otimes \cO_{\tX}(4E_{3}).$$
    It can be shown that $\bR \phi_{*}\cO_{\tX}(D^{>0}_{i}) \simeq \bR \phi_{*}\cO_{\tX}(D^{\geq0}_{i}) \simeq \cO_{X}$ for $1 \leq i \leq 5.$ However, using the same argument as the previous example,
    $$\phi_{*}\cO_{\tX}(D^{>0}_{6}) \cong \phi_{*}\cO_{\tX}(D^{\geq0}_{6}) \cong \cO_{X} \quad \text{and} \quad R^{1}\phi_{*}\cO_{\tX}(D^{>0}_{6}) \neq 0.$$
    Thus, $Y$ is normal, but does not have Du Bois singularities.
    Also, since $\cO_{\tX}(D^{\geq 0}_{i}) =\cO_{\tX}(D^{>0}_{i})$ for $1 \leq i \leq 6,$ we have $\Q^{H}_{Y}[n] \cong IC^{H}_{Y},$ meaning that $Y$ is a rational homology manifold.
\end{ex}

 \begin{rmk}
    It was first shown by Steenbrink \cite{SteenbrinkIsolated} that the variety $Y$ does not have Du Bois singularities. Also, the variety $Y = \Spec \bigg( \C[x, y, t]/\langle t^{7}-(x^2 - y^3)\rangle \bigg)$ does not have Du Bois singularities because the minimal exponent of the quasi-homogeneous polynomial $t^7 - (x^2 - y^3)$ is given by
    $$\displaystyle \frac{1}{2}+ \frac{1}{3} + \frac{1}{7}<1 \quad \text{\cite{Saito-DB}.}$$
    Since $Y$ is a rational homology manifold, we have quasi-isomorphisms
    $$\OmpY \simeq I\OmpY:= Gr^{F}_{-p}DR(IC^{H}_{Y})[p-n] \quad \text{for $0 \leq p \leq 2.$}$$

\end{rmk}

\subsection{On The Log Canonical Threshold Of Hypersurfaces}

       Let $\phi: \tX \rightarrow X$ be a strong log resolution of the pair $(X, D).$ We will have $E = \phi^{*}D_{red}$ denote the simple normal crossing divisor on $\tX$ and we will set 
        $$\phi^{*}D= \displaystyle \sum a_{j}E_{j}.$$
        For $\lambda \in \Q$, the \emph{multiplier ideal sheaf}
        $$\mathcal{J}(\lambda \cdot D)= \mathcal{J}(X,\lambda \cdot D) \subseteq \cO_{X}$$
        associated to the $\Q$-divisor $\lambda \cdot D$ is defined to be
        $$\mathcal{J}(\lambda \cdot D)= \phi_{*}\cO_{\tX}(K_{\tX/X} - \lfloor \lambda \cdot \phi^{*}D \rfloor).$$
        For a closed point $x \in X$, the \emph{log canonical threshold} of the divisor $D$ at $x$ is defined by
        $$\lct(D,x):= \inf \bigg \{ \lambda\in \Q \hspace{.05in}| \hspace{.05in} \mathcal{J}(X, \lambda \cdot D)_{x} \subseteq m_{x} \bigg\},$$
        where $m_{x} \subset \cO_{X}$ is the maximal ideal sheaf of $x.$ In general, the log canonical threshold of $D$ is defined as
        $$\lct(D):= \inf \bigg \{ \lct(D,x) \hspace{.05in}| \hspace{.05in} x \in X\bigg \}$$
        If $K_{\tX}$ and $K_{X}$ are the canonical divisors on $\tX$ and $X$ respectively, then we can write
        $$K_{\tX / X} = \displaystyle \sum k_{j}E_{j},$$
        and it is also well known that we can calculate the log canonical threshold of $D$ with the coefficients of $\phi^{*}D$ and $K_{\tX/X},$
        $$\lct(D) = \min \bigg \{1, \bigg \{ \displaystyle \frac{k_{j}+1}{a_{j}} \hspace{.05in}\bigg| \hspace{.05in} \text{$E_{i}$ are exceptional divisors} \bigg \}  \bigg\}.$$
        Analytically, the \emph{log canonical threshold} of the local equation $f$, also denoted as $\lct(f),$ is defined by
        $$\lct(f):= \sup \bigg\{\lambda \in \Q \hspace{.05in} \bigg| \hspace{.05in} \displaystyle \frac{1}{|f|^{2\lambda}} \in L^1_{loc} \bigg\}.$$
        If $D$ can be written as $D = \displaystyle \sum b_{i}D_{i}$, where $D_{i}$ are the irreducible components of $D$, then there are local equations $g_{i}$ defining the hypersurfaces $D_{i}$ and the multiplier ideal sheaf of $D$ is viewed as the ideal sheaf
        $$\displaystyle \mathfrak{J}(\varphi_{D}) \underset{locally}{=} \bigg \{ h \in \cO^{an}_{X} \bigg| \displaystyle \frac{|h|^{2}}{ \prod |g_{i}|^{2b_{i}}} \in L^{1}_{loc} \bigg\},$$
        where $\varphi_{D} = \displaystyle \sum b_{i}\log|g_{i}|$ is a locally defined plurisubharmonic function. We have 
        $$\mathcal{J}(D)^{an} = \mathfrak{J}(\varphi_{D}),$$
        and the two distinct views of the local canonical threshold of a divisor, algebraically or analytically, coincide. For a more detailed and in-depth discussion of the multiplier ideal sheaf and the log canonical threshold, see \cite{LarasfeldII}.
        
        Alternatively, let $X$ be a smooth affine algebraic variety and $f \in \cO_{X}(X)$. There exists a polynomial $b(s) \in \C[s]$, and a polynomial $P(s) \in \cD_{X}[s]$ satisfying the relation
        $$P(s)f^{s+1} = b(s) \cdot f^{s}.$$
        The set of all polynomials $b(s)$ forms an ideal in the polynomial ring $\C[s]$, and the monic generator of this ideal, denoted as $b_{f}(s),$ is called the \emph{Bernstein-Sato polynomial} or the $\emph{b-function}$ of $f$. The roots of the polynomial $b_{f}(-s)$ are positive rational numbers \cite{Kash-bfcn}, and the minimal root of  $b_{f}(-s)$ is the log canonical threshold $\lct(f)$ \cite[Thm. 10.6]{KollarPairs}.   The minimal root of the reduced Bernstein-Sato polynomial $ \tilde{b}_{f}(-s):=\displaystyle \frac{b_{f}(-s)}{s-1}$, which is denoted as $\tilde{\alpha}_{f}$, is called the \emph{minimal exponent of $f$}. The relationship between the minimal exponent of the local defining equation $f$ and the log canonical threshold of $f$ is given by
        $$\lct(f) := \min \bigg \{ \tilde{\alpha}_{f}, 1 \bigg \}.$$

      Next, we show that we can precisely determine when the natural maps 
      $$ \cO_{X} \rightarrow \bR \phi_{*}(\cO_{\tX}(D^{>0}_{i}))  \quad \text{and} \quad \cO_{X} \rightarrow \bR \phi_{*}(\cO_{\tX}(D^{\geq 0}_{i})),$$
            are quasi-isomorphisms in terms of the log canonical threshold of $D.$
       \begin{thm}\label{lctThm}
           If  $Y \rightarrow X$ is this $N$-fold cyclic covering of $D$ and $\phi:\tX \rightarrow X$ is a strong log resolution of the pair $(X,D)$,  
       $$\text{The natural map $\cO_{X} \rightarrow \bR \phi_{*}( \cO_{\tX}(D^{> 0}_{i}))$  is a quasi-isomorphism} \quad \text{if and only if} \quad \lct(D)\geq \dfrac{i}{N}$$
        $$\text{The natural map $\cO_{X} \rightarrow \bR \phi_{*}( \cO_{\tX}(D^{\geq 0}_{i}))$  is a quasi-isomorphism} \quad \text{if and only if} \quad \lct(D)> \dfrac{i}{N}$$
       \end{thm}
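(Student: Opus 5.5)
The plan is to reduce both statements to the triviality of a multiplier ideal, using Grothendieck duality for $\phi$ together with the local vanishing in Corollary \ref{Vanishing}. Set $\lambda = i/N$, so $0<\lambda<1$.

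First I rewrite the two divisors in terms of rounding of $\lambda\,\phi^{*}D$. By the Remark following Theorem \ref{SmoothDecomp}, $D^{\geq 0}_{i} = \lfloor \lambda\, \phi^{*}D \rfloor$. For $D^{>0}_i$, the coefficient $b_j$ equals $\lceil \lambda a_j\rceil - 1$ in both cases of its definition. For $0<\varepsilon \ll 1$ this is $\lfloor (\lambda-\varepsilon)a_j \rfloor$, so
$$D^{>0}_{i} = \lfloor (\lambda - \varepsilon)\,\phi^{*}D \rfloor .$$
Both divisors are effective: each $a_j \geq 1$, so the coefficients are nonnegative.

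Next I dualize. Since $\phi$ is proper and $\tX$, $X$ are smooth of dimension $n$, Grothendieck duality gives, for any divisor $G$ on $\tX$,
$$\cD\big(\bR\phi_{*}\cO_{\tX}(G)\big) \simeq \bR\phi_{*}\big(\omega_{\tX}\otimes \cO_{\tX}(-G)\big).$$
Take $G = D^{>0}_i$ or $G = D^{\geq 0}_i$. The last part of Corollary \ref{Vanishing} says $R^{q}\phi_{*}(\omega_{\tX}(-G))=0$ for $q>0$. Hence the dual complex is the single sheaf
$$\phi_{*}\cO_{\tX}(K_{\tX} - G) = \mathcal{J}(\mu D)\otimes\omega_X,$$
where $\mu=\lambda$ for $G=D^{\geq 0}_i$ and $\mu = \lambda-\varepsilon$ for $G=D^{>0}_i$. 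This identification uses $K_{\tX}=\phi^*K_X+K_{\tX/X}$ and the projection formula.

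Under duality, the natural map $\cO_X \to \bR\phi_{*}\cO_{\tX}(G)$ corresponds to the trace/inclusion $\mathcal{J}(\mu D)\otimes\omega_X \hookrightarrow \omega_X$. I expect the main obstacle to be checking this compatibility. I would verify it by factoring the natural map through $\cO_X \simeq \bR\phi_*\cO_{\tX}\to\bR\phi_*\cO_{\tX}(G)$. Its dual is then $\bR\phi_*\omega_{\tX}(-G)\to\bR\phi_*\omega_{\tX}\simeq\omega_X$, using Grauert–Riemenschneider for the last isomorphism. On $\cH^0$ that map is exactly the inclusion of $\phi_*\omega_{\tX}(-G)$ into $\phi_*\omega_{\tX}=\omega_X$. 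Since $\cD$ is an anti-equivalence on $D^b_{coh}(\cO_X)$, the original map is a quasi-isomorphism if and only if $\mathcal{J}(\mu D)=\cO_X$.

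Finally I translate triviality of the multiplier ideal into a condition on $\lct(D)$. By the definition of $\lct$ (globally, taken as the infimum over points) and the discreteness of the jumping behaviour of $\mathcal{J}(tD)$ computed on $\tX$, we have $\mathcal{J}(tD)=\cO_X$ if and only if $t<\lct(D)$. Equivalently, by the explicit formula, $\lfloor t a_j\rfloor \le k_j$ for all $j$; for non-exceptional components this reduces to $t<1$, which holds since $\lambda<1$. Taking $t=\lambda$ gives the second statement: the map is a quasi-isomorphism iff $\lct(D)>i/N$. Taking $t=\lambda-\varepsilon$ for all sufficiently small $\varepsilon$ gives $\lambda-\varepsilon<\lct(D)$ for all small $\varepsilon$, i.e. $\lct(D)\geq i/N$, which is the first statement.
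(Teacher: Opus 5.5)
Your proof is correct, and it is organized differently from the paper's. The paper first reduces to the claim that the map is a quasi-isomorphism if and only if $K_{\tX/X}-D^{>0}_{i}$ (resp.\ $K_{\tX/X}-D^{\geq 0}_{i}$) is effective.

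\begin{itemize}
\item For ``if'', it combines Corollary \ref{Vanishing} with the sandwich $\cO_{\tX}\subseteq \cO_{\tX}(\sum b_{ij}E_{j})\subseteq \omega_{\tX}\otimes\phi^{*}\omega_{X}^{-1}$. This gives $\bR\phi_{*}\cO_{\tX}(K_{\tX/X}-D^{>0}_{i})\simeq\cO_{X}$, and the paper then dualizes twice.
\item For ``only if'', it argues by contradiction. It produces a nonzero higher direct image via a local cohomology result \cite[Thm. 1.3]{AH2} and a codimension-two argument.
\item Finally, it turns effectivity into $i/N\leq (k_{j}+1)/a_{j}$ and invokes the numerical formula for $\lct$.
\end{itemize}

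You instead dualize the \emph{morphism} itself. Local vanishing makes the dual a map of sheaves, namely the inclusion $\mathcal{J}(\mu D)\otimes\omega_{X}\hookrightarrow\omega_{X}$. So both directions come out at once and no local cohomology input is needed. You also check that the map being tested is the natural one, which the paper does not address.

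Writing $D^{\geq 0}_{i}=\lfloor \lambda\phi^{*}D\rfloor$ and $D^{>0}_{i}=\lfloor(\lambda-\varepsilon)\phi^{*}D\rfloor$ lets you use the multiplier-ideal definition of $\lct$ directly. As a result, your argument is valid for non-reduced $D$ as well. The paper's formula $\lct(D)=\min\{1,(k_{j}+1)/a_{j}\}$ (over exceptional $E_{j}$) and its codimension-two step tacitly assume $D$ is reduced, so that $D^{>0}_{i}$ and $D^{\geq 0}_{i}$ are supported on exceptional divisors. What the paper's route offers is the explicit criterion ``$K_{\tX/X}-D^{>0}_{i}$ effective''. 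That is just your condition $\mathcal{J}=\cO_{X}$ read off on $\tX$.

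One small correction concerns your side remark that, for non-exceptional components, $\lfloor t a_{j}\rfloor\leq k_{j}$ ``reduces to $t<1$, which holds since $\lambda<1$''. This is only right when $a_{j}=1$. In general the condition reads $t<1/a_{j}$, which can fail at $t=\lambda$. Your main step goes through the definition of $\lct$ rather than this remark, so nothing breaks, but the remark should be corrected or dropped.
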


        \begin{proof}
             We will set the notation of
            $$\phi^{*}(D) = \displaystyle \sum a_{j}E_{j}$$
            $$K_{\tX/X}= \displaystyle \sum k_{j}E_{j},$$
            where $E_{j}$ are the irreducible components of the simple normal crossing divisor $E.$ We claim the natural map $\cO_{X} \rightarrow \bR \phi_{*}( \cO_{\tX}(D^{> 0}_{i}))$  is a quasi-isomorphism, respectively the natural map $\cO_{X} \rightarrow \bR \phi_{*}( \cO_{\tX}(D^{\geq 0}_{i}))$  is a quasi-isomorphism, if and only if
      $$K_{\tX/X} - D^{>0}_{i} \quad \text{ is effective}$$
      respectively,
       $$K_{\tX/X} - D^{\geq0}_{i} \quad \text{ is effective}.$$
       We will prove the claim for the natural map $\cO_{X} \rightarrow \bR \phi_{*}( \cO_{\tX}(D^{> 0}_{i}))$; the argument for $\cO_{X} \rightarrow \bR \phi_{*}( \cO_{\tX}(D^{> 0}_{i}))$ is shown similarly. We set
       $$K_{\tX/X} - D^{>0}_{i} =\displaystyle \sum b_{ij}E_{j} \quad \text{ where $b_{ij} = k_{j} - m_{ij}$ (see \ref{m_{ij}} for $m_{ij}$)}$$
       First, assume that $b_{ij} \geq 0$  for all $j$. By the previous Corollary \ref{Vanishing},
       $$R^{q}\phi_{*}\cO_{\tX}\bigg(\sum b_{ij}E_{j}\bigg) \cong R^{q}\phi_{*}\bigg(\omega_{\tX} \otimes \cO_{\tX}(-D^{>0}_{i})\bigg)\otimes \omega_{X} = 0 \quad \text{for $q >0$.}$$
       We also have $0 \leq b_{ij} \leq k_{j}$ for all $j.$ So, there are inclusions
       $$ \cO_{\tX} \subseteq \cO_{\tX}\bigg(\sum b_{ij}E_{j}\bigg) \subseteq \omega_{\tX} \otimes \phi^{*}\omega_{X} $$
       So, we have $\cO_{X} \subseteq \phi_{*}\cO_{\tX}\bigg(\sum b_{ij}E_{j}\bigg) \subseteq \cO_{X}$, which implies $\phi_{*}\cO_{\tX}\bigg(\sum b_{ij}E_{j}\bigg) \cong \cO_{X}.$ So, altogether, there is a quasi-isomorphism
       $$\bR \phi_{*} \cO_{\tX}\bigg(\sum b_{ij}E_{j}\bigg) \simeq \cO_{X}.$$
       This implies we have the following quasi-isomorphisms,
       $$\cO_{X} \simeq \bR \phi_{*} \cO_{\tX}\bigg(\sum b_{ij}E_{j}\bigg) \simeq \bR \phi_{*}\bigg( \omega_{\tX} \otimes \phi^{*}\omega^{-1}_{X} \otimes \cO_{\tX}(-D^{>0}_{i}) \bigg)$$
       $$\simeq \bR \phi_{*} \bR \cH om_{\cO_{\tX}}(\cO_{\tX}(D^{>0}_{i}), \phi^{!}\cO_{X}) \simeq \bR \cH om_{\cO_{X}}(\bR \phi_{*}\cO_{\tX}(D^{>0}_{i}),\cO_{X}).$$
       Dualizing, we obtain 
       $$\cO_{X} \simeq \bR \cH om_{\cO_{X}}(\cO_{X},\cO_{X}) \simeq \bR \cH om_{\cO_{X}}(\bR \cH om_{\cO_{\tX}}(\bR \phi_{*}\cO_{\tX}(D^{>0}_{i}),\cO_{X}),\cO_{X})\simeq \bR \phi_{*}\cO_{\tX}(D^{>0}_{i}). $$

        Now assume that $b_{ij} < 0$  for some $j$. For notation, say $b_{i \beta} <0.$ Then there must exist $q>0$ such that 
        $$R^{q}\phi_{*}(\cO_{\tX}(D^{>0}_{i})) \neq 0.$$
        Indeed, suppose on the contrary that $R^{q}\phi_{*}(\cO_{\tX}(D^{>0}_{i})) =0$ for $q>0.$ Consider the dual of $\bR \phi_{*}(\cO_{\tX}(D^{>0}_{i})),$
        $$\bR \cH om_{\cO_{X}}(\bR \phi_{*}(\cO_{\tX}(D^{>0}_{i})), \omega_{X}) \simeq \bR \phi_{*}\bR \cH om_{\cO_{\tX}}((\cO_{\tX}(D^{>0}_{i})), \omega_{\tX})$$
       $$\simeq \bR \phi_{*} \bigg(\cO_{\tX}\bigg(\sum b_{i j}E_{j}\bigg) \bigg) \otimes \omega_{X}^{-1}$$
       Since $E_{j}$ are exceptional divisors, the image $Z:=\phi(E ) \subset X$ has a codimension of at least two. Now, if $R^{q}\phi_{*}(\cO_{\tX}(D^{>0}_{i})) =0$ for $q>0,$ then by \cite[Thm. 1.3]{AH2}, 
       $$\cH^{1}_{Z}\bigg(\bR \phi_{*} \bigg(\displaystyle\cO_{\tX}\bigg(\sum b_{i j}E_{j}\bigg) \bigg)\otimes \omega_{X}^{\vee}  \bigg) = 0,$$
       which implies $\phi_{*} \bigg(\displaystyle\cO_{\tX}\bigg(\sum b_{i j}E_{j}\bigg) \bigg) \cong \cO_{X}.$ However, this is a contradiction because $b_{i \beta}<0$. Therefore, there must exist $q>0$ such that $R^{q}\phi_{*}(\cO_{\tX}(D^{>0}_{i})) \neq 0.$

        From the claim, we the natural map $\cO_{X} \rightarrow \bR \phi_{*}( \cO_{\tX}(D^{> 0}_{i}))$  is a quasi-isomorphism\\
        
        $\text{if and only if \quad   $m_{ij} \leq k_{j}$ for all $j$}$\\

         $\text{ if and only if \quad  $\displaystyle \frac{ia_{j}}{N} \leq k_{j}+1$ for all $j$,}$\\
         
         $\text{ if and only if \quad  $\displaystyle \frac{i}{N} \leq \frac{k_{j}+1}{a_{j}}$ for all $j$,}$\\
         
         $\text{ if and only if \quad  $\displaystyle \frac{i}{N} \leq \lct(D).$}$\\

        \end{proof}

        \begin{rmk} If $\pi: Y \rightarrow X$ is the $N$-fold cyclic covering of $D$, it was also shown recently shown by Chen and  Musta\c{t}\u{a}  \cite{chenMustata} at if $D$ is a reduced divisor on $X$, then $\lct(D) > \dfrac{i}{N}$ if and only if  
        $$R^{q}f_{*}( \cO_{\tX}(D^{\geq 0}_{i})) = R^{q}f_{*}( \cO_{\tX}\bigg(\bigg \lfloor \frac{i}{N}\phi^{*}D \bigg \rfloor\bigg)) = 0 \quad \text{for $q \geq 1.$}$$
        Morevover, it was shown in \cite{chenMustata} that if $D$ is a reduced divisor on $X$ and  $\tilde{\alpha}(D) > k$, then $\tilde{\alpha}(D) > k + \dfrac{i}{N} $ if and only if 
        $$R^{q}f_{*}(\Omega^{k}_{\tX}\log (E) \otimes \cO_{\tX}(D^{\geq 0}_{i})) = 0 \quad \text{for $q \geq 1.$}$$
        \end{rmk}

\section{Cyclic Covers For Singular Varieties}\label{S4}
     For this section,  $X$ will be a singular variety of dimension $n$.  Again, we are in the setting of \ref{setting}.  If 
     $$\pi: Y = \Spec \bigg( \bigoplus_{i=0}^{N-1}\cL^{-i} \bigg) \rightarrow X$$
     is the $N$-fold cyclic covering of $D$, then $$\pi_{+}\Q^{H}_{Y}[n] \simeq \Q^{H}_{X}[n] \oplus j_{!}\cV \quad \text{where $\cV\in D^{b}MHM(U)$.}$$
     Taking the duality functor $\D: D^{b}MHM(X) \rightarrow D^{b}MHM(X)^{op}$, we have
     $$\pi_{+}\D(\Q^{H}_{Y}[n]) \simeq \D(\Q^{H}_{X}[n])  \oplus j_{+}\D(\cV) $$
     \begin{rmk}
       In the case $X$ is singular, we may not have $\Q^{H}_{X}[n]$ or $\cV$ to be pure Hodge modules.
     \end{rmk}

\subsection{Simplicial Resolutions}
Let $\epsilon: X_{\bullet} \rightarrow X$ be a cubical hyperresolution. That is,  $\epsilon: X_{\bullet} \rightarrow X$ is a simiplical resolution and $\dim X_{n-i} \leq i$. We may choose such a simplicial resolution by \cite{GNPP}. We set $X_{-1}= X.$ There are proper maps
$$\epsilon_{ij}:X_{i} \rightarrow X_{i-1} \quad \text{for $0 \leq j \leq i$}$$
which satisfy the conditions
$$\epsilon_{ij}\epsilon_{i+1, j+1} = \epsilon_{ij}\epsilon_{i+1,j} \quad \text{for all $j<i.$}$$
By composing any sequence of $(\epsilon_{ij})$, we obtain a well defined morphism
$$\epsilon^{i}:X_{i} \rightarrow X.$$
Furthermore, by definition, the natural map
$$\Q_{X} \rightarrow \bR \epsilon_{*}\Q_{X_{\bullet}}.$$
is a quasi-isomorphism. By base change, we also have a quasi-isomorphism
$$\C_{X} \rightarrow \bR \epsilon_{*}\C_{X_{\bullet}}.$$
For $i \geq 0$, we have a soft resolution for the constant sheaf $\C_{X_{i}}$
$$0 \rightarrow \C_{X_{i}} \rightarrow \cE^{0}_{X_{i}} \rightarrow \cE^{1}_{X_{i}} \rightarrow \cdots \rightarrow \cE^{\dim X_{i}}_{X_{i}} \rightarrow 0,$$
where $\cE^{k}_{X_{i}}$ is the sheaf of $C^{\infty}$-complex valued $k$-forms on $X_{i}.$ If $k > \dim X_{i}$, we set $\cE^{k}_{X_{i}} = 0$. From the maps $\epsilon_{ij}$ and $\epsilon^{i}$, we have the double complex
$$\bT & 0  & 0  & & 0  &\\
\text{$n^{th}$ row} \quad 0 \arrow[r]& \epsilon^{n}_{*}\cE^{0}_{X_{n}} \arrow[r, "d"] \arrow[u]& \epsilon^{n}_{*}\cE^{1}_{X_{n}} \arrow[r, "d"]\arrow[u] & \cdots \arrow[r, "d"] & \epsilon^{n}_{*}\cE^{n}_{X_{n}} \arrow[r]\arrow[u] & 0 \\
 \text{$(n-1)^{th}$ row} \quad0 \arrow[r] & \epsilon^{n-1}_{*}\cE^{0}_{X_{n-1}} \arrow[r, "d"] \arrow[u] & \epsilon^{n-1}_{*}\cE^{1}_{X_{n-1}} \arrow[r, "d"] \arrow[u] & \cdots \arrow[r, "d"] & \epsilon^{n-1}_{*}\cE^{n}_{X_{n-1}} \arrow[u] \arrow[r] & 0\\
& \vdots \arrow[u] & \vdots \arrow[u] & & \vdots \arrow[u] &\\
\text{$1^{st}$ row} \quad 0 \arrow[r] & \epsilon^{1}_{*}\cE^{0}_{X_{1}} \arrow[r, "d"] \arrow[u, "\epsilon^{*}_{22}- \epsilon^{*}_{21} + \epsilon^{*}_{20}"] & \epsilon^{1}_{*}\cE^{1}_{X_{1}} \arrow[r, "d"] \arrow[u, "\epsilon^{*}_{22}- \epsilon^{*}_{21} + \epsilon^{*}_{20}"] & \cdots \arrow[r, "d"] & \epsilon^{1}_{*}\cE^{n}_{X_{1}} \arrow[u, "\epsilon^{*}_{22}- \epsilon^{*}_{21} + \epsilon^{*}_{20}"] \arrow[r] & 0\\
\text{$0^{th}$ row} \quad 0 \arrow[r] & \epsilon^{0}_{*}\cE^{0}_{X_{0}} \arrow[r, "d"] \arrow[u, "\epsilon^{*}_{11}-\epsilon^{*}_{10}" ] & \epsilon^{0}_{*}\cE^{1}_{X_{0}} \arrow[r, "d"] \arrow[u, "\epsilon^{*}_{11}-\epsilon^{*}_{10}"] & \cdots \arrow[r, "d"] & \epsilon^{0}_{*}\cE^{n}_{X_{0}} \arrow[u, "\epsilon^{*}_{11}-\epsilon^{*}_{10}"] \arrow[r] & 0\\
& 0 \arrow[u] & 0 \arrow[u] & & 0 \arrow[u] &
\eT$$
The $i^{th}$ row of the double complex is quasi-isomorphic to $\bR \epsilon^{i}_{*}\C_{X_{i}}$, and the total complex associated with the double complex above is quasi-isomorphic to $\C_{X}$. Let $C$ denote the double complex above, and $Tot(C)$ denote the total complex. We can filter the total complex by 
$$F^{p}Tot(C)^{m} = 
\displaystyle \bigoplus_{i+j = m, \hspace{0.05in} j \geq p} \epsilon^{j}_{*}\cE^{i}_{X_{j}}$$
With this filtration, we have
$$Gr^{i}_{F}Tot(C) \simeq \begin{cases} \bR \epsilon^{i}_{*}\C_{X_{i}}[-i] & \text{for $0 \leq i \leq n$}\\ \\\
0 & \text{otherwise} \end{cases}$$
\begin{lemma}\label{ConeLemma}
    If $\epsilon: X_{\bullet} \rightarrow X$ is a cubical hyperresolution, then the complex
    $$ Cone(\bR \epsilon^{0}_{*}\C_{X_{0}}, Cone(\bR \epsilon^{1}_{*}\C_{X_{1}}[-1], Cone(\bR \epsilon^{2}_{*}\C_{X_{2}}[-2], Cone(\cdots,Cone(\bR \epsilon^{n-1}_{*}\C_{X_{n-1}}[-(n-1)], \bR \epsilon^{n}_{*}\C_{X_{n}}[-(n-1)])))))$$
    is quasi-isomorphic to $\C_{X}[1].$ 
\end{lemma}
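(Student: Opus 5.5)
The plan is to read the iterated cone as the total complex of the filtered complex $Tot(C)$, built up one filtration step at a time, and to compare it with the description $Tot(C)\simeq \C_X$ established just before the lemma. The filtration $F^{p}Tot(C)$ is decreasing, with $F^{0}Tot(C)=Tot(C)$, $F^{n+1}Tot(C)=0$ and $Gr^{i}_{F}Tot(C)\simeq \bR\epsilon^{i}_{*}\C_{X_{i}}[-i]$. For each $p$ we have the short exact sequence of complexes
$$0\rightarrow F^{p+1}Tot(C)\rightarrow F^{p}Tot(C)\rightarrow Gr^{p}_{F}Tot(C)\rightarrow 0.$$
In the derived category this gives $F^{p}Tot(C)\simeq Cone\big(Gr^{p}_{F}Tot(C)[-1]\rightarrow F^{p+1}Tot(C)\big)$, where the connecting morphism is induced by the vertical differentials $\sum_{j}(-1)^{j}\epsilon^{*}_{p+1,j}$ of the double complex.

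I will run a descending induction on $p$, from $p=n$ down to $p=0$. At $p=n$ the complex is $F^{n}Tot(C)=\bR\epsilon^{n}_{*}\C_{X_{n}}[-n]$. Each step wraps the previous object in one more cone with $Gr^{p}_F[-1]\simeq \bR\epsilon^{p}_{*}\C_{X_{p}}[-p-1]$. Unwinding the induction, $Tot(C)=F^{0}Tot(C)$ is an iterated cone of exactly the shape displayed in the lemma, with the innermost terms $\bR\epsilon^{n-1}_*\C_{X_{n-1}}[-(n-1)]$ and $\bR\epsilon^{n}_{*}\C_{X_{n}}[-(n-1)]$. The only issue is shift bookkeeping. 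Shifting every term by $[1]$ converts the relation $F^{p}\simeq Cone(Gr^{p}[-1]\to F^{p+1})$ into the lemma's convention $Cone(A,B)$ with $A=\bR\epsilon^{p}_*\C_{X_p}[-p]$ and $B$ the next nested cone. I will check this by verifying inductively that the $p$-th nested cone is quasi-isomorphic to $F^{p}Tot(C)[1]$. At the top level this gives that the whole expression is $Tot(C)[1]\simeq \C_X[1]$.

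The last ingredient is the quasi-isomorphism $\C_X\to Tot(C)$. This follows from cohomological descent, $\C_X\simeq\bR\epsilon_*\C_{X_\bullet}$, together with the fact that the rows are soft resolutions computing $\bR\epsilon^i_*\C_{X_i}$. Both facts are already recorded above the lemma, so I take them as given.

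I expect the main obstacle to be the consistency of shifts and signs in the nested cones. In particular, the innermost term carries the shift $[-(n-1)]$ rather than $[-n]$, which has to match the convention that the cone of a map $A\to B$ is $A[1]\oplus B$. I would first settle this convention on the case $n=1$, where the lemma reads $Cone(\bR\epsilon^0_*\C_{X_0},\bR\epsilon^1_*\C_{X_1})\simeq\C_X[1]$. That is just the triangle $\C_X\to\bR\epsilon^0_*\C_{X_0}\to\bR\epsilon^1_*\C_{X_1}\xrightarrow{+1}$, and I would then propagate the same convention through the induction.
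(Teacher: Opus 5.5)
Your proposal is correct and follows essentially the same route as the paper. Both arguments use the short exact sequences $0\to F^{p+1}Tot(C)\to F^{p}Tot(C)\to Gr^{p}_{F}Tot(C)\to 0$ to obtain $F^{p}Tot(C)[1]\simeq Cone(\bR\epsilon^{p}_{*}\C_{X_{p}}[-p], F^{p+1}Tot(C)[1])$, iterate down to $F^{n}Tot(C)[1]\simeq\bR\epsilon^{n}_{*}\C_{X_{n}}[-(n-1)]$, and conclude via $Tot(C)\simeq\C_X$; the only difference is that you run a descending induction from $p=n$, while the paper peels off the cones starting from $p=0$, which is purely cosmetic.
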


\begin{proof}
    We have the exact sequence
    $$0 \rightarrow F^{1}Tot(C) \rightarrow \C_{X} \rightarrow Gr^{0}_{F}Tot(C) \simeq \bR \epsilon^{0}_{*}\C_{X_{0}} \rightarrow 0.$$
    Using shifts, we have the exact sequence
    $$0 \rightarrow \bR \epsilon^{0}_{*}\C_{X_{0}} \rightarrow F^{1}Tot(C)[1] \rightarrow \C_{X}[1] \rightarrow 0$$
    Thus,
    $$\C_{X}[1] \simeq Cone(\bR \epsilon^{0}_{*}\C_{X_{0}}, F^{1}Tot(C)[1]).$$
    We also have the exact sequence
    $$0 \rightarrow F^{2}Tot(C) \rightarrow F^{1}Tot(C) \rightarrow Gr^{1}_{F}Tot(C) \simeq \bR \epsilon^{1}_{*}\C_{X_{1}}[-1] \rightarrow 0.$$
    Again, using shifts, we have the exact sequence
    $$0  \rightarrow \bR \epsilon^{1}_{*}\C_{X_{1}}[-1] \rightarrow F^{2}Tot(C)[1] \rightarrow F^{1}Tot(C)[1] \rightarrow 0.$$
    Thus, we have the following quasi-isomorphisms
    $$F^{1}Tot(C)[1] \simeq Cone( \epsilon^{1}_{*}\C_{X_{1}}[-1], F^{2}Tot(C)[1]).$$
    $$\C_{X}[1] \simeq Cone(\bR \epsilon^{0}_{*}\C_{X_{0}},Cone( \bR \epsilon^{1}_{*}\C_{X_{1}}[-1], F^{2}Tot(C)[1])).$$
    Continue this process by using the exact triangles
    $$0  \rightarrow \bR \epsilon^{i}_{*}\C_{X_{i}}[-i] \rightarrow F^{i +1}Tot(C)[1] \rightarrow F^{i}Tot(C)[1] \rightarrow 0$$
    and the quasi-isomorphism
    $$Cone(\bR \epsilon^{i}_{*}\C_{X_{i}}[-i],F^{i +1}Tot(C)[1]) \simeq  F^{i}Tot(C)[1].$$
    Since the simplicial resolution $\epsilon: X_{\bullet} \rightarrow X$ is finite, the filtration $F^{\bullet}Tot(C)$ is finite, and this process terminates to the desired quasi-isomorphism.
    
\end{proof}
For $i \geq 0$, we also have a soft resolution for the sheaf of holomorphic $p$-forms $\Omega^{p}_{X_{i}}$
$$0 \rightarrow \Omega^{p}_{X_{i}} \rightarrow \cE^{p,0}_{X_{i}} \rightarrow \cE^{p,1}_{X_{i}} \rightarrow \cdots \rightarrow \cE^{p, \dim X_{i}-p}_{X_{i}} \rightarrow 0,$$
where $\cE^{p,q}_{X_{i}}$ is the sheaf of $C^{\infty}$-complex valued $(p,q)$-forms on $X_{i}.$ From the maps $\epsilon_{ij}$ and $\epsilon^{i}$, we have the double complex
$$\bT & 0  & 0  & & 0  &\\
\text{$n^{th}$ row} \quad 0 \arrow[r]& \epsilon^{n}_{*}\cE^{p,0}_{X_{n}} \arrow[r, "\bar{\partial}"] \arrow[u]& \epsilon^{n}_{*}\cE^{1}_{X_{n}} \arrow[r, "\bar{\partial}"]\arrow[u] & \cdots \arrow[r, "\bar{\partial}"] & \epsilon^{n}_{*}\cE^{p, n-p}_{X_{i}} \arrow[r]\arrow[u] & 0 \\
 \text{$(n-1)^{th}$ row} \quad0 \arrow[r] & \epsilon^{n-1}_{*}\cE^{p,0}_{X_{n-1}} \arrow[r, "\bar{\partial}"] \arrow[u] & \epsilon^{n-1}_{*}\cE^{p,1}_{X_{n-1}} \arrow[r, "\bar{\partial}"] \arrow[u] & \cdots \arrow[r, "\bar{\partial}"] & \epsilon^{n-1}_{*}\cE^{p, n-p}_{X_{n-1}} \arrow[u] \arrow[r] & 0\\
& \vdots \arrow[u] & \vdots \arrow[u] & & \vdots \arrow[u] &\\
\text{$1^{st}$ row} \quad 0 \arrow[r] & \epsilon^{1}_{*}\cE^{p,0}_{X_{1}} \arrow[r, "\bar{\partial}"] \arrow[u, "\epsilon^{*}_{22}- \epsilon^{*}_{21} + \epsilon^{*}_{20}"] & \epsilon^{1}_{*}\cE^{p,1}_{X_{1}} \arrow[r, "\bar{\partial}"] \arrow[u, "\epsilon^{*}_{22}- \epsilon^{*}_{21} + \epsilon^{*}_{20}"] & \cdots \arrow[r, "\bar{\partial}"] & \epsilon^{1}_{*}\cE^{p,n-p}_{X_{1}} \arrow[u, "\epsilon^{*}_{22}- \epsilon^{*}_{21} + \epsilon^{*}_{20}"] \arrow[r] & 0\\
\text{$0^{th}$ row} \quad 0 \arrow[r] & \epsilon^{0}_{*}\cE^{p,0}_{X_{0}} \arrow[r, "\bar{\partial}"] \arrow[u, "\epsilon^{*}_{11}-\epsilon^{*}_{10}" ] & \epsilon^{0}_{*}\cE^{p,1}_{X_{0}} \arrow[r, "\bar{\partial}"] \arrow[u, "\epsilon^{*}_{11}-\epsilon^{*}_{10}"] & \cdots \arrow[r, "\bar{\partial}"] & \epsilon^{0}_{*}\cE^{p,n-p}_{X_{0}} \arrow[u, "\epsilon^{*}_{11}-\epsilon^{*}_{10}"] \arrow[r] & 0\\
& 0 \arrow[u] & 0 \arrow[u] & & 0 \arrow[u] &
\eT$$
The $i^{th}$ row of the double complex is quasi-isomorphic to $\bR \epsilon^{i}_{*}\Omega^{p}_{X_{i}}$. The single complex associated from the double complex is the $p^{th}$-graded piece of the Du Bois complex $\Omp \in D^{b}_{coh}(\cO_{X})$ \cite{dubois}. This construction of the Du Bois complex was given by Steenbrink \cite{steenbrink}. Using a similar argument as above, we may also describe the complex $\Omp$ in the following fashion
$$\Omp[1] \simeq  Cone(\bR \epsilon^{0}_{*}\Omega^{p}_{X_{0}}, Cone(\bR \epsilon^{1}_{*}\Omega^{p}_{X_{1}}[-1],Cone(\cdots,Cone(\bR \epsilon^{n-1}_{*}\Omega^{p}_{X_{n-1}}[-(n-1)], \bR \epsilon^{n}_{*}\Omega^{p}_{X_{n}}[-(n-1)]))))).$$
A similar method of using cones to describe the complex $\Omp$ was given in \cite{shenVenVo}.

\begin{nota}
To help with notation, we set
$$\mathfrak{C}:= Cone(\bullet, \bullet)$$
With this new notation, if $\epsilon:X_{\bullet} \rightarrow X$ is a cubical hyperresolution, we have quasi-isomorphisms
 $$\C[1] \simeq \fC(\bR \epsilon^{0}_{*}\C_{X_{0}}, \fC(\bR \epsilon^{1}_{*}\C_{X_{1}}[-1], \fC(\bR \epsilon^{2}_{*}\C_{X_{2}}[-2], \fC(\cdots,\fC(\bR \epsilon^{n-1}_{*}\C_{X_{n-1}}[-(n-1)], \bR \epsilon^{n}_{*}\C_{X_{n}}[-(n-1)])))))$$
$$\Omp[1] \simeq  \fC(\bR \epsilon^{0}_{*}\Omega^{p}_{X_{0}}, \fC(\bR \epsilon^{1}_{*}\Omega^{p}_{X_{1}}[-1],\fC(\cdots,\fC(\bR \epsilon^{n-1}_{*}\Omega^{p}_{X_{n-1}}[-(n-1)], \bR \epsilon^{n}_{*}\Omega^{p}_{X_{n}}[-(n-1)])))))$$

\end{nota}

\subsection{Simplicial Resolutions And Hodge modules}
Let $\epsilon: X_{\bullet} \rightarrow X$ be a cubical hyperresolution with the assumptions and notation previously discussed. From the proper maps
$$\epsilon_{ij}:X_{i} \rightarrow X_{i-1} \quad \text{for $0 \leq j \leq i$}$$
$$\epsilon^{i}:X_{i} \rightarrow X,$$
for each $i$, we have the following map in the bounded derived category of mixed Hodge modules
$$\displaystyle \sum (-1)^{i+j}(\epsilon_{ij})^{*}:\epsilon^{i-1}_{+}\Q^{H}_{X_{i-1}} \rightarrow \epsilon^{i}_{+}\Q^{H}_{X_{i}}.$$
By construction, the composition map
$$\epsilon^{i-1}_{+}\Q^{H}_{X_{i-1}} \rightarrow \epsilon^{i}_{+}\Q^{H}_{X_{i}} \rightarrow  \epsilon^{i+1}_{+}\Q^{H}_{X_{i+1}}$$
is the zero map in $D^{b}MHM(X)$, which induces a map
$$\epsilon^{i-1}_{+}\Q^{H}_{X_{i-1}} \rightarrow \fC\bigg(\epsilon^{i}_{+}\Q^{H}_{X_{i}}, \epsilon^{i+1}_{+}\Q^{H}_{X_{i+1}} \bigg)[-1] =\fC\bigg(\epsilon^{i}_{+}\Q^{H}_{X_{i}}[-1], \epsilon^{i+1}_{+}\Q^{H}_{X_{i+1}}[-1] \bigg).$$
Now, from the map $\epsilon^{i-2}_{+}\Q^{H}_{X_{i-2}} \rightarrow \epsilon^{i-1}_{+}\Q^{H}_{X_{i-1}}$, there is a map
$$\epsilon^{i-2}_{+}\Q^{H}_{X_{i-2}} \rightarrow \fC\bigg( \epsilon^{i-1}_{+}\Q^{H}_{X_{i-1}}, \fC\bigg(\epsilon^{i}_{+}\Q^{H}_{X_{i}}[-1], \epsilon^{i+1}_{+}\Q^{H}_{X_{i+1}}[-1] \bigg)[-1].$$
Indeed,  we need to show that the induced map
$$\epsilon^{i-2}_{+}\Q^{H}_{X_{i-2}} \rightarrow \fC\bigg(\epsilon^{i}_{+}\Q^{H}_{X_{i}}, \epsilon^{i+1}_{+}\Q^{H}_{X_{i+1}} \bigg)[-1]  $$
is the zero map in $D^{b}MHM(X)$. Since the functor $rat: MHM(X) \rightarrow Perv(X)$ is faithful, and for any $\cM \in D^{b}MHM(X)$, there is an idenfication
$$rat(\cH^{i}(\cM)) = \hspace{.01in} ^{p}\cH^{i}(rat(\cM)),$$
it suffices to show the underlying map between the rational complexes
$$ \bR \epsilon^{i-2}_{*} \Q_{X_{i-2}} \rightarrow \fC\bigg( \bR \epsilon^{i}_{*}\Q_{X_{i}}, \bR \epsilon^{i+1}_{*}\Q_{X_{i+1}} \bigg)[-1]$$
is the zero map. Moreover, it suffices to show the map between the De Rham complexes
$$ \bR \epsilon^{i-2}_{*} \C_{X_{i-2}} \rightarrow \fC\bigg( \bR \epsilon^{i}_{*}\C_{X_{i}}, \bR \epsilon^{i+1}_{*}\C_{X_{i+1}} \bigg)[-1]$$
is the zero map. Recall from the previous section, the total complex $Tot(C)$ has a filtration such that 
$$Gr^{i}_{F}Tot(C) \simeq \bR \epsilon^{i}_{*}\C_{X_{i}}[-i].$$
The exact triangle
$$ \bT \fC\bigg( \bR \epsilon^{i}_{*}\C_{X_{i}}, \bR \epsilon^{i+1}_{*}\C_{X_{i+1}} \bigg)[-1] \arrow[r] & \bR \epsilon^{i}_{*}\C_{X_{i}} \ar[r] & \bR \epsilon^{i+1}_{*}\C_{X_{i+1}} \ar[r, "+1"] &  \hfill \eT$$
is identified with the exact sequence 
$$ \bT0 \ar[r] &  \bigg(F^{i}Tot(C)/F^{i+2}Tot(C) \bigg)[i] \arrow[r] & Gr^{i}_{F}Tot(C)[i] \ar[r] & Gr^{i+1}_{F}Tot(C)[i+1]  \ar[r] &  0 \eT$$
The map 
$$ \bR \epsilon^{i-2}_{*} \C_{X_{i-2}} \rightarrow \fC\bigg( \bR \epsilon^{i}_{*}\C_{X_{i}}, \bR \epsilon^{i+1}_{*}\C_{X_{i+1}} \bigg)[-1]$$
is identified with the composition of maps
$$\bT Gr^{i-2}_{F}Tot(C)[i-2] \arrow[rr, bend left, "zero-map"] \arrow[r] & Gr^{i-1}_{F}Tot(C)[i-1] \arrow[r] & \bigg(F^{i}Tot(C)/F^{i+2}Tot(C) \bigg)[i] \eT$$
 Therefore, the induced map
$$\epsilon^{i-2}_{+}\Q^{H}_{X_{i-2}} \rightarrow \fC\bigg(\epsilon^{i}_{+}\Q^{H}_{X_{i}}, \epsilon^{i+1}_{+}\Q^{H}_{X_{i+1}} \bigg)[-1]  $$
is the zero map. 

Through iteration, we have an object
$$\fC(\epsilon^{0}_{+}\Q^{H}_{X_{0}}, \fC(\epsilon^{1}_{+}\Q^{H}_{X_{1}}[-1],\fC(\cdots,\fC(\epsilon^{n-1}_{+}\Q^{H}_{X_{n-1}}[-(n-1)], \epsilon^{n}_{+}\Q^{H}_{X_{n}}[-(n-1)])))) \in D^{b}MHM(X).$$
\begin{thm}\label{Q-thm}
    Let $\epsilon: X_{\bullet} \rightarrow X$ be a cubical hyperresolution. The complex
    $$\fC(\epsilon^{0}_{+}\Q^{H}_{X_{0}}, \fC(\epsilon^{1}_{+}\Q^{H}_{X_{1}}[-1],\fC(\cdots,\fC(\epsilon^{n-1}_{+}\Q^{H}_{X_{n-1}}[-(n-1)], \epsilon^{n}_{+}\Q^{H}_{X_{n}}[-(n-1)])))) $$
    is isomorphic to $\Q^{H}_{X}[1]$  in $D^{b}MHM(X).$
\end{thm}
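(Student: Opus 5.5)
Starting from the unit of adjunction, I plan to build a morphism in $D^{b}MHM(X)$ from $\Q^{H}_{X}[1]$ (up to the shift convention) to the iterated cone, call it $\mathcal{K}$, and then check that it is an isomorphism by passing to underlying rational complexes. The map comes from the adjunction morphisms $\Q^{H}_{X} \rightarrow \epsilon^{i}_{+}(\epsilon^{i})^{*}\Q^{H}_{X} = \epsilon^{i}_{+}\Q^{H}_{X_{i}}$. For $i=0$ this gives $\Q^{H}_{X} \rightarrow \epsilon^{0}_{+}\Q^{H}_{X_{0}}$. Its composite with $\sum(-1)^{j}(\epsilon_{1j})^{*}: \epsilon^{0}_{+}\Q^{H}_{X_{0}} \rightarrow \epsilon^{1}_{+}\Q^{H}_{X_{1}}$ is $\sum_j (-1)^j$ of adjunction maps that coincide, because $\epsilon^{0}\epsilon_{1j} = \epsilon^{1}$ for every $j$. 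Hence the composite vanishes: it is the unit for $\epsilon^{1}$ counted with alternating signs over $j=0,1$.

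Next I will lift the unit $\Q^{H}_{X} \rightarrow \epsilon^{0}_{+}\Q^{H}_{X_{0}}$ through the iterated cone structure, in the same way that $\epsilon^{i-2}_{+}\Q^{H}_{X_{i-2}}$ was shown above to map into the nested cones. The cone $\mathcal{K}$ sits in a triangle $\epsilon^{0}_{+}\Q^{H}_{X_{0}} \rightarrow \mathcal{K} \rightarrow \mathcal{K}' \xrightarrow{+1}$, where $\mathcal{K}'$ is the inner nested cone. Extending $\Q^{H}_{X} \rightarrow \epsilon^{0}_{+}\Q^{H}_{X_{0}}$ to a map of the augmented complex, and thus to a morphism $\Q^{H}_{X}[1] \rightarrow \mathcal{K}$ compatible with the triangles of Lemma \ref{ConeLemma}, requires the obstruction maps to vanish. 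As in the preceding discussion, I reduce this vanishing to the underlying complexes: $rat$ is faithful on $MHM(X)$ and satisfies $rat\circ\cH^{i} = {}^{p}\cH^{i}\circ rat$, so it suffices to check vanishing after applying $rat$ and tensoring with $\C$. There the statement becomes the one already proved using the filtration $F^{\bullet}Tot(C)$: the augmentation $\C_{X} \rightarrow Tot(C)$ maps $\C_{X}$ into $F^{0}$, and the composite into successive graded pieces factors through differentials that square to zero.

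Finally, with the morphism $\Q^{H}_{X}[1] \rightarrow \mathcal{K}$ in hand, I will check that it is an isomorphism. A morphism in $D^{b}MHM(X)$ is an isomorphism if and only if $rat$ of it is an isomorphism, since $rat$ is conservative: a morphism whose cone $C$ satisfies ${}^{p}\cH^{i}(rat\, C)=0$ for all $i$ has $\cH^{i}C=0$, by faithfulness and exactness of $rat$ on the abelian level. Because $rat$ commutes with $\epsilon^{i}_{+}=\bR\epsilon^{i}_{*}$, with cones and with shifts (Theorem \ref{Saito'sMainThm}), $rat(\mathcal{K})\otimes\C$ is the iterated cone of Lemma \ref{ConeLemma}. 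That lemma identifies it with $\C_{X}[1]$, the identification being induced precisely by the augmentation. Hence the map is a quasi-isomorphism after $\otimes\C$, and therefore already over $\Q$.

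The main obstacle is the well-definedness issue: iterated cones in a triangulated category are not functorial, so $\mathcal{K}$ and the comparison map depend on choices. I must ensure that the choices made in $D^{b}MHM(X)$ are compatible with those realized concretely by $Tot(C)$. I would first try to avoid this by working in an enhancement, namely Saito's realization of $D^{b}MHM(X)$ via filtered complexes, or by using the simplicial object $\epsilon_{+}\Q^{H}_{X_{\bullet}}$ directly, so that $\mathcal{K}$ becomes an honest total complex. If that fails, I would instead argue by induction on the number of cones: at each stage, show that any cone-filling is an isomorphism, using the five lemma on the triangles together with conservativity of $rat$.
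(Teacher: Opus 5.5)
\textbf{The gap: how you produce the morphism.} Your final step is the same as the paper's: produce a morphism $\Q^{H}_{X}[1]\to\mathcal{K}$, then use that $rat$ is conservative together with Lemma \ref{ConeLemma}. The problem is earlier. You kill the lifting obstructions by arguing that, since $rat$ is faithful on $MHM(X)$ and commutes with cohomology, a morphism in $D^{b}MHM(X)$ vanishes once its image under $rat$ does. That inference is false. Faithfulness on the heart only shows the morphism is zero on every cohomology object, and a morphism in the derived category can be nonzero while inducing zero on all cohomology. Already over a point, $\mathrm{Hom}_{D^{b}MHM(pt)}(\Q^{H}_{pt},\Q^{H}_{pt}(1)[1])=\mathrm{Ext}^{1}_{MHS}(\Q(0),\Q(1))\cong\C/2\pi i\Q$ is nonzero, yet $rat$ sends all of it to $\mathrm{Ext}^{1}_{\Q}(\Q,\Q)=0$. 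Your obstruction classes are morphisms of exactly this derived kind, so nothing in your argument excludes them. The paper uses the same faithfulness argument before the theorem, to build $\mathcal{K}$, but its proof of the theorem does not use it to produce the comparison map.

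There is also a slip in your triangle. With $\mathcal{K}=\fC(\epsilon^{0}_{+}\Q^{H}_{X_{0}},\mathcal{K}')$ the triangle is $\epsilon^{0}_{+}\Q^{H}_{X_{0}}\to\mathcal{K}'\to\mathcal{K}\to\epsilon^{0}_{+}\Q^{H}_{X_{0}}[1]$. So you must lift $\Q^{H}_{X}[1]\to\epsilon^{0}_{+}\Q^{H}_{X_{0}}[1]$ along $\mathcal{K}\to\epsilon^{0}_{+}\Q^{H}_{X_{0}}[1]$; your version yields a map $\Q^{H}_{X}\to\mathcal{K}$ of the wrong degree.

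\textbf{The missing ingredient and how it repairs your route.} What you need is the paper's key device, adjunction to a point. Since $\Q^{H}_{X}=a_{X}^{*}\Q^{H}_{pt}$, one has $\mathrm{Hom}(\Q^{H}_{X}[1],\mathcal{K})\cong\mathrm{Hom}(\Q^{H}_{pt}[1],a_{X+}\mathcal{K})$; the paper identifies this with $H^{0}(X,\Q)$ and takes the class $(1_{\alpha})$. The same device fixes your argument directly:
\begin{itemize}
\item Write $\mathcal{K}'=\fC(\epsilon^{1}_{+}\Q^{H}_{X_{1}}[-1],\mathcal{K}'')$. The only obstruction is the composite $\Q^{H}_{X}\to\epsilon^{0}_{+}\Q^{H}_{X_{0}}\to\mathcal{K}'$. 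Its further composite to $\epsilon^{1}_{+}\Q^{H}_{X_{1}}$ vanishes by your own computation with units, so it factors through $\mathcal{K}''$.
\item The underlying complex of $a_{X+}\mathcal{K}''$ is an iterated extension of the $\bR\Gamma(X_{k},\Q)[1-k]$ with $k\geq 2$. It therefore has no cohomology in degrees $\leq 0$, so $a_{X+}\mathcal{K}''$ lies in $D^{\geq 1}$ for the standard $t$-structure on $D^{b}MHM(pt)$. Hence $\mathrm{Hom}(\Q^{H}_{X},\mathcal{K}'')\cong\mathrm{Hom}(\Q^{H}_{pt},a_{X+}\mathcal{K}'')=0$ by $t$-structure orthogonality, and the lift exists.
\item In the same way, $a_{X+}\mathcal{K}'$ lies in $D^{\geq 0}$, which gives $\mathrm{Hom}(\Q^{H}_{X}[1],\mathcal{K}')=0$. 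So the lift is unique.
\item Uniqueness forces the image of the lift under $rat$ to be the identification of Lemma \ref{ConeLemma}. This grants, as the paper does, that $rat(\mathcal{K})$ is the iterated cone of that lemma; that is the compatibility you rightly flag.
\end{itemize}
Your conservativity step then finishes the proof, with no appeal to faithfulness of $rat$ on $D^{b}MHM(X)$. Compared with the paper, the repaired argument gives a canonical morphism, the unique lift of the adjunction unit, rather than one chosen inside $H^{0}(X,\Q)$.
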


\begin{proof}
     If $a_{X}:X \rightarrow \{pt\}$ is the natural map, then $\Q^{H}_{X} = a^{*}_{X} \Q^{H}_{pt}$. Therefore,
    $$Hom(\Q^{H}_{X}[1], \fC(\epsilon^{0}_{+}\Q^{H}_{X_{0}}, \fC(\epsilon^{1}_{+}\Q^{H}_{X_{1}}[-1],\fC(\cdots,\fC(\epsilon^{n-1}_{+}\Q^{H}_{X_{n-1}}[-(n-1)], \epsilon^{n}_{+}\Q^{H}_{X_{n}}[-(n-1)])))))$$
    $$\cong Hom(a_{X}^{*}\Q^{H}_{pt}[1],  \fC(\epsilon^{0}_{+}\Q^{H}_{X_{0}}, \fC(\epsilon^{1}_{+}\Q^{H}_{X_{1}}[-1],\fC(\cdots,\fC(\epsilon^{n-1}_{+}\Q^{H}_{X_{n-1}}[-(n-1)], \epsilon^{n}_{+}\Q^{H}_{X_{n}}[-(n-1)])))))$$
    $$\cong Hom(\Q^{H}_{pt}[1], a_{X+} \fC(\epsilon^{0}_{+}\Q^{H}_{X_{0}}, \fC(\epsilon^{1}_{+}\Q^{H}_{X_{1}}[-1],\fC(\cdots,\fC(\epsilon^{n-1}_{+}\Q^{H}_{X_{n-1}}[-(n-1)], \epsilon^{n}_{+}\Q^{H}_{X_{n}}[-(n-1)])))))$$
    Now, 
    $$a_{X+} \fC(\epsilon^{0}_{+}\Q^{H}_{X_{0}}, \fC(\epsilon^{1}_{+}\Q^{H}_{X_{1}}[-1],\fC(\cdots,\fC(\epsilon^{n-1}_{+}\Q^{H}_{X_{n-1}}[-(n-1)], \epsilon^{n}_{+}\Q^{H}_{X_{n}}[-(n-1)]))))$$ 
    $$= \bR \Gamma(X, rat( \fC(\epsilon^{0}_{+}\Q^{H}_{X_{0}}, \fC(\epsilon^{1}_{+}\Q^{H}_{X_{1}}[-1],\fC(\cdots,\fC(\epsilon^{n-1}_{+}\Q^{H}_{X_{n-1}}[-(n-1)], \epsilon^{n}_{+}\Q^{H}_{X_{n}}[-(n-1)]))))))$$
    The functor $rat:D^{b}MHM(X) \rightarrow D^{b}(X)$ commutes with cones. Thus
    $$rat( \fC(\epsilon^{0}_{+}\Q^{H}_{X_{0}}, \fC(\epsilon^{1}_{+}\Q^{H}_{X_{1}}[-1],\fC(\cdots,\fC(\epsilon^{n-1}_{+}\Q^{H}_{X_{n-1}}[-(n-1)], \epsilon^{n}_{+}\Q^{H}_{X_{n}}[-(n-1)])))))$$
    $$\simeq  \fC(rat(\epsilon^{0}_{+}\Q^{H}_{X_{0}}), \fC(rat(\epsilon^{1}_{+}\Q^{H}_{X_{1}}[-1]),\fC(\cdots,\fC(rat(\epsilon^{n-1}_{+}\Q^{H}_{X_{n-1}}[-(n-1)]), rat(\epsilon^{n}_{+}\Q^{H}_{X_{n}}[-(n-1)])))))$$
    $$\simeq \fC(\bR \epsilon^{0}_{*}\Q_{X_{0}}, \fC(\bR \epsilon^{1}_{*}\Q_{X_{1}}[-1], \fC(\cdots,\fC(\bR \epsilon^{n-1}_{*}\Q_{X_{n-1}}[-(n-1)], \bR \epsilon^{n}_{*}\Q_{X_{n}}[-(n-1)]))))).$$
    The quasi-isomorphism above is compatible with the quasi-isomorphism \ref{ConeLemma}, but with $\Q$-coefficients. So we obtain the identifications
     $$Hom(\Q^{H}_{X}[1], \fC(\epsilon^{0}_{+}\Q^{H}_{X_{0}}, \fC(\epsilon^{1}_{+}\Q^{H}_{X_{1}}[-1],\fC(\cdots,\fC(\epsilon^{n-1}_{+}\Q^{H}_{X_{n-1}}[-(n-1)], \epsilon^{n}_{+}\Q^{H}_{X_{n}}[-(n-1)])))))$$
   
     $$= Hom(\Q^{H}_{pt}[1], \bR \Gamma(X, \Q_{X}[1])) = H^{0}(X, \Q_{X}) = \bigoplus \Q_{X_{\alpha}}.$$
     where $X_{\alpha}$ are the connected components of $X$. The elements $1_{\alpha} \in \Q_{X_{\alpha}}$ induce the natural map
     $$\Q^{H}_{X}[1] \rightarrow \fC(\epsilon^{0}_{+}\Q^{H}_{X_{0}}, \fC(\epsilon^{1}_{+}\Q^{H}_{X_{1}}[-1],\fC(\cdots,\fC(\epsilon^{n-1}_{+}\Q^{H}_{X_{n-1}}[-(n-1)], \epsilon^{n}_{+}\Q^{H}_{X_{n}}[-(n-1)]))))). $$
     Now, if $\mathcal{K}$ is the cone of the natural map above, then, by Lemma \ref{ConeLemma}, $rat(\mathcal{K}) \simeq 0,$ which implies $\mathcal{K} \simeq 0.$ So, the map 
     $$\Q^{H}_{X}[1] \rightarrow \fC(\epsilon^{0}_{+}\Q^{H}_{X_{0}}, \fC(\epsilon^{1}_{+}\Q^{H}_{X_{1}}[-1],\fC(\cdots,\fC(\epsilon^{n-1}_{+}\Q^{H}_{X_{n-1}}[-(n-1)], \epsilon^{n}_{+}\Q^{H}_{X_{n}}[-(n-1)]))))) $$
     is an isomorphism in $D^{b}MHM(X).$

\end{proof}
From the previous theorem, we obtain the following result, which was first shown by Saito \cite{saito5}.
\begin{cor}
    From the isomorphism 
    $$\Q^{H}_{X}[1] \rightarrow \fC(\epsilon^{0}_{+}\Q^{H}_{X_{0}}, \fC(\epsilon^{1}_{+}\Q^{H}_{X_{1}}[-1],\fC(\cdots,\fC(\epsilon^{n-1}_{+}\Q^{H}_{X_{n-1}}[-(n-1)], \epsilon^{n}_{+}\Q^{H}_{X_{n}}[-(n-1)]))))) $$
    there is a quasi-isomorphism
    $$Gr^{F}_{-p}DR(\Q^{H}_{X}) \simeq \Omp[-p]$$
\end{cor}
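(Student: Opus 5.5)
The plan is to apply the functor $Gr^{F}_{-p}DR(\bullet)$ to the isomorphism of Theorem \ref{Q-thm} and compare the resulting iterated cone with the cone description of $\Omp[1]$ recorded in the Notation above. The functor $Gr^{F}_{-p}DR$ is exact as a functor of triangulated categories $D^{b}MHM(X) \rightarrow D^{b}_{coh}(\cO_{X})$; locally we work through a closed embedding $X \hookrightarrow M$ into a smooth variety, and the result is independent of the embedding. It therefore commutes with shifts and takes cones to cones. This gives
$$Gr^{F}_{-p}DR(\Q^{H}_{X})[1] \simeq \fC\bigl(Gr^{F}_{-p}DR(\epsilon^{0}_{+}\Q^{H}_{X_{0}}), \fC\bigl(Gr^{F}_{-p}DR(\epsilon^{1}_{+}\Q^{H}_{X_{1}})[-1], \fC(\cdots)\bigr)\bigr).$$

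Next we evaluate each term. Each $\epsilon^{i}$ is proper, so Theorem \ref{direct} gives $Gr^{F}_{-p}DR(\epsilon^{i}_{+}\Q^{H}_{X_{i}}) \simeq \bR\epsilon^{i}_{*}Gr^{F}_{-p}DR(\Q^{H}_{X_{i}})$. Since $X_{i}$ is smooth, we compute on each connected component $X_{i}^{\beta}$ of dimension $d$. There $\Q^{H}_{X_{i}^{\beta}} = (\Q^{H}_{X_{i}^{\beta}}[d])[-d]$, with underlying filtered module $(\omega, F)$ satisfying $Gr^{F}_{-d}=\omega$. The filtered de Rham complex then gives $Gr^{F}_{-p}DR(\Q^{H}_{X_{i}^{\beta}}[d]) \simeq \Omega^{p}_{X_{i}^{\beta}}[d-p]$, and hence $Gr^{F}_{-p}DR(\Q^{H}_{X_{i}}) \simeq \Omega^{p}_{X_{i}}[-p]$, uniformly in $d$; when $p>d$ both sides vanish. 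Substituting, every term acquires the common shift $[-p]$, and we obtain
$$Gr^{F}_{-p}DR(\Q^{H}_{X})[1] \simeq \fC\bigl(\bR\epsilon^{0}_{*}\Omega^{p}_{X_{0}}, \fC(\bR\epsilon^{1}_{*}\Omega^{p}_{X_{1}}[-1],\cdots)\bigr)[-p] \simeq \Omp[1][-p],$$
where the last isomorphism is the cone description of $\Omp$ from the Notation. Cancelling the shift gives the claim.

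The main obstacle is that iterated cones are only defined up to non-canonical isomorphism. We must therefore check that, under $Gr^{F}_{-p}DR$, the connecting maps $\sum(-1)^{i+j}\epsilon_{ij}^{*}$ and the chosen null-homotopies go to the corresponding pullback maps $\sum(-1)^{i+j}\epsilon_{ij}^{*}$ on $\bR\epsilon^{i}_{*}\Omega^{p}_{X_{i}}$, together with the null-homotopies coming from the Steenbrink double complex. To handle this, the first step is to use the functoriality of Saito's isomorphism in Theorem \ref{direct} with respect to adjunction morphisms, which identifies the image of $\epsilon_{ij}^{*}$ with pullback of forms. The second step is to argue, as in the proof of Theorem \ref{Q-thm}, that the relevant compositions are zero on the nose at the level of the filtered double complex $F^{\bullet}Tot$. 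This lets the whole construction be realized as a single filtered complex of filtered differential complexes, whose graded pieces are the rows of the Steenbrink double complex, so that both iterated cones become the total complex of the same filtered object. If that strictification is awkward, the fallback is to cite Saito \cite{saito5}, which the paper notes first established this quasi-isomorphism.
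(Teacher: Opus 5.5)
Your proposal is correct and follows the same route as the paper: you apply $Gr^{F}_{-p}DR$ to the isomorphism of Theorem \ref{Q-thm} and pass it through the iterated cones. You then identify each term with $\bR\epsilon^{i}_{*}\Omega^{p}_{X_{i}}[-p]$ via Theorem \ref{direct} and the computation on the smooth $X_{i}$, and match the result with the cone description of $\Omp[1]$. Your extra paragraph on the non-canonicity of iterated cones checks that the connecting maps go to pullbacks of forms; the paper's proof uses this compatibility without comment, so your remark only strengthens the argument.
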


\begin{proof}
    If we apply $Gr^{F}_{-p}DR(\bullet)$ to the map
$$\Q^{H}_{X}[1] \rightarrow \fC(\epsilon^{0}_{+}\Q^{H}_{X_{0}}, \fC(\epsilon^{1}_{+}\Q^{H}_{X_{1}}[-1],\fC(\cdots,\fC(\epsilon^{n-1}_{+}\Q^{H}_{X_{n-1}}[-(n-1)], \epsilon^{n}_{+}\Q^{H}_{X_{n}}[-(n-1)]))))) $$
we obtain
$$Gr^{F}_{-p}DR(\Q^{H}_{X}[1])$$ 
$$\simeq Gr^{F}_{-p}DR \bigg[  \fC(\epsilon^{0}_{+}\Q^{H}_{X_{0}}, \fC(\epsilon^{1}_{+}\Q^{H}_{X_{1}}[-1],\fC(\cdots,\fC(\epsilon^{n-1}_{+}\Q^{H}_{X_{n-1}}[-(n-1)], \epsilon^{n}_{+}\Q^{H}_{X_{n}}[-(n-1)]))))) \bigg]$$
$$\simeq  \fC(Gr^{F}_{-p}DR(\epsilon^{0}_{+}\Q^{H}_{X_{0}}),\fC(\cdots,\fC(Gr^{F}_{-p}DR(\epsilon^{n-1}_{+}\Q^{H}_{X_{n-1}})[-(n-1)], Gr^{F}_{-p}DR(\epsilon^{n}_{+}\Q^{H}_{X_{n}})[-(n-1)])))))$$
$$\fC(\bR \epsilon^{0}_{*}\Omega^{p}_{X_{0}}[-p], \fC(\bR \epsilon^{1}_{*}(\Omega^{p}_{X_{1}}[-p])[-1],\fC(\cdots, \fC(\bR \epsilon^{n-1}_{*}(\Omega^{p}_{X_{n-1}}[-p])[-(n-1)], \bR \epsilon^{n}_{*}(\Omega^{p}_{X_{n}}[-p])[-(n-1)])))))$$
$$\simeq (\Omp[-p])[1].$$
\end{proof}

Let $\mathcal{U} \subseteq X$ be any open subset with inclusion map $\mathbf{j}:\cU \hookrightarrow X.$ For each map $\epsilon^{i}: X_{i} \rightarrow X$, we have a cartesian diagram
$$\bT (\epsilon^{i})^{-1}(\cU)\arrow[r, "\epsilon^{i}"] \arrow[d, "\bfj^{i}"] & \cU \arrow[d, "\bfj"] \\
X_{i} \arrow[r, "\epsilon^{i}"] & X \eT$$
Set $\cU_{i} = (\epsilon^{i})^{-1}(\cU)$ with the inclusion map $\bfj^{i}: \cU_{i} \hookrightarrow X_{i}$. Using that $\epsilon^{i}$ is proper,
$$\epsilon^{i}_{+}\Q^{H}_{X_{i}} \otimes \bfj_{+}\Q^{H}_{\cU} \simeq \bfj_{+}\bigg(\epsilon^{i}_{+}\Q^{H}_{\cU_{i}} \otimes \Q^{H}_{\cU} \bigg) \simeq \bfj_{+}\epsilon^{i}_{+}\Q^{H}_{\cU_{i}} \simeq \epsilon^{i}_{+}\bfj^{i}_{+}\Q^{H}_{\cU_{i}}    $$
We set $\Sigma_{i} = (X_{i} \backslash \cU_{i})_{red}$.  We may assume $\Sigma_{i}$ on each irreducible component of $X_{i}$ is either empty, equal to the irreducible component, or a simple normal crossing divisor on the irreducible component of $X_{i}$. From Theorem \ref{Q-thm},
$$\bfj_{+} \Q^{H}_{\cU}[1] \simeq \Q^{H}_{X}[1] \otimes \bfj_{+} \Q^{H}_{\cU} $$
$$\simeq \fC(\epsilon^{0}_{+}\Q^{H}_{X_{0}}, \fC(\epsilon^{1}_{+}\Q^{H}_{X_{1}}[-1],\fC(\cdots,\fC(\epsilon^{n-1}_{+}\Q^{H}_{X_{n-1}}[-(n-1)], \epsilon^{n}_{+}\Q^{H}_{X_{n}}[-(n-1)]))))) \otimes \bfj_{+} \Q^{H}_{\cU}$$
$$\simeq  \fC(\epsilon^{0}_{+}\Q^{H}_{X_{0}} \otimes \bfj_{+} \Q^{H}_{\cU}, \fC(\epsilon^{1}_{+}\Q^{H}_{X_{1}}\otimes \bfj_{+} \Q^{H}_{\cU}[-1],\fC(\cdots,\fC(\epsilon^{n-1}_{+}\Q^{H}_{X_{n-1}}\otimes \bfj_{+} \Q^{H}_{\cU}[-(n-1)], \epsilon^{n}_{+}\Q^{H}_{X_{n}}\otimes \bfj_{+} \Q^{H}_{\cU}[-(n-1)])))))$$
$$\simeq  \fC(\epsilon^{0}_{+}\bfj^{0}_{+} \Q^{H}_{\cU_{0}}, \fC(\epsilon^{1}_{+} \bfj^{1}_{+} \Q^{H}_{\cU_{1}}[-1],\fC(\cdots,\fC(\epsilon^{n-1}_{+} \bfj^{n-1}_{+} \Q^{H}_{\cU_{n-1}}[-(n-1)], \epsilon^{n}_{+} \bfj^{n}_{+} \Q^{H}_{\cU_{n}}[-(n-1)]))))).$$ 
If  $\Sigma = (X \backslash \cU)_{red}$, by \cite[\S 3]{saito5},  the $p^{th}$-graded piece of the Du Bois complex of the pair $(X, \Sigma)$ is given by 
$$\underline{\Omega}^{p}_{X}(\log \Sigma)\simeq Gr^{F}_{-p}DR(\bfj_{+}\Q^{H}_{\cU})[p]$$
$$\simeq Gr^{F}_{-p}DR\bigg(\fC(\epsilon^{0}_{+}\bfj^{0}_{+} \Q^{H}_{\cU_{0}}, \fC(\epsilon^{1}_{+} \bfj^{1}_{+} \Q^{H}_{\cU_{1}}[-1],\fC(\cdots,\fC(\epsilon^{n-1}_{+} \bfj^{n-1}_{+} \Q^{H}_{\cU_{n-1}}[-(n-1)], \epsilon^{n}_{+} \bfj^{n}_{+} \Q^{H}_{\cU_{n}}[-(n-1)]))))\bigg)[p-1]$$
$$\simeq \fC(\bR \epsilon^{0}_{*} \Omega^{p}_{X_{0}}(\log \Sigma_{0}), \fC(\bR \epsilon^{1}_{*}\bR  \Omega^{p}_{X_{1}}(\log \Sigma_{1})[-1],\fC(\cdots)))[-1].$$
With a similar calculation, 
$$\underline{\Omega}^{p}_{X, \Sigma}\simeq Gr^{F}_{-p}DR(\bfj_{!}\Q^{H}_{\cU})[p]$$
$$\simeq Gr^{F}_{-p}DR\bigg(\fC(\epsilon^{0}_{+}\bfj^{0}_{!} \Q^{H}_{\cU_{0}}, \fC(\epsilon^{1}_{+} \bfj^{1}_{!} \Q^{H}_{\cU_{1}}[-1],\fC(\cdots,\fC(\epsilon^{n-1}_{+} \bfj^{n-1}_{!} \Q^{H}_{\cU_{n-1}}[-(n-1)], \epsilon^{n}_{+} \bfj^{n}_{!} \Q^{H}_{\cU_{n}}[-(n-1)]))))\bigg)[p-1]$$
$$\simeq \fC(\bR \epsilon^{0}_{*} \Omega^{p}_{X_{0}}(\log \Sigma_{0})(-\Sigma_{0}), \fC(\bR \epsilon^{1}_{*}\Omega^{p}_{X_{1}}(\log \Sigma_{1})(-\Sigma_{1})[-1],\fC(\cdots)))[-1].$$

\subsection{Simplicial Resolutions And Cyclic Coverings}\label{singularCyclic}
Recall, we have arbitrary line bundle  $\cL$ on $X$ such that for a positive integer $N \geq 0$, there exists a global section $s \in \Gamma(X, \cL^{N})$ that defines an effective Cartier divisor $D.$ We set $U = X \backslash D$ with natural map $j:U \hookrightarrow X.$ If $\pi: Y \rightarrow X$ is the $N$-fold cyclic covering of $D$, there exists $\cV \in D^{b}MHM(X)$ such that
    $$\pi_{+}\Q^{H}_{Y} \simeq \Q^{H}_{X} \oplus j_{!}\cV.$$
    \begin{rmk}
        We are excluding the shift of $\Q^{H}_{Y}$ by the dimension $\dim Y = n$, which will be more convenient for working with simplicial resolutions. 
    \end{rmk}
    For $0 \leq p \leq n$, applying the functor $Gr^{F}_{-p}DR(\bullet)$ to the isomorphism $ \pi_{+}\Q^{H}_{Y} \simeq \Q^{H}_{X} \oplus \cV$, we have
    $$ \OmpY[-p] \simeq \Omp[-p] \oplus Gr^{F}_{-p}DR(j_{!}\cV).$$
    From our previous discussion, we can describe the complex $Gr^{F}_{-p}DR(\cV)$ in terms of cubical hyperresolution. Let $\epsilon: X_{\bullet} \rightarrow X$ be a cubical hyperresolution with the proper maps
$$\epsilon_{ij}:X_{i} \rightarrow X_{i-1} \quad \text{for $0 \leq j \leq i$.}$$
$$\epsilon^{i}:X_{i} \rightarrow X.$$
For each $i$, there is a commutative diagram
$$ \bT Y_{i} \arrow[d, "\pi_{i}"] \arrow[r,"\varepsilon^{i}"]& Y  \ar[d, "\pi"] \\
  X_{i} \arrow[r, "\epsilon^{i}"] & X \eT$$
  where $Y_{i} = (Y \times_{X} X_{i})_{red}$ is the $N$-fold cyclic covering of  $D_{i}=(\epsilon^{i})^{*}D$. Again, we may assume that $D_{i}$ on each irreducible component of $X_{i}$ is either empty, the irreducible component, or  $E_{i}=(D_{i})_{red}$ is a simple normal crossing divisor on the irreducible component. There is also the commutative diagram 
  $$ \bT  \phi^{-1}_{i}(U)=U_{i} \arrow[d, "j_{i}"] \arrow[r, "\epsilon^{i}"]& U  \ar[d, "j"] \\
  X_{i} \arrow[r, "\epsilon^{i}"] & X \eT$$
  From proper base change,
  $$\pi_{i+}\Q^{H}_{Y_{i}} \simeq \pi_{i+}(\varepsilon^{i})^{*}\Q^{H}_{Y} \simeq (\epsilon^{i})^{*}\bigg(\Q^{H}_{X}\oplus j_{!}\cV\bigg) \simeq \Q^{H}_{X_{i}} \oplus j_{i !}\cV_{i},$$
  where $\cV_{i}:= (\epsilon^{i})^{*}\cV.$ 
  We recently proved
  $$\Q^{H}_{X} \simeq \fC(\epsilon^{0}_{+}\Q^{H}_{X_{0}}, \fC(\epsilon^{1}_{+}\Q^{H}_{X_{1}}[-1],\fC(\cdots,\fC(\epsilon^{n-1}_{+}\Q^{H}_{X_{n-1}}[-(n-1)], \epsilon^{n}_{+}\Q^{H}_{X_{n}}[-(n-1)])))))[-1].$$
  Therefore, 
  $$j_{!}\cV \simeq  \Q^{H}_{X} \otimes j_{!}\cV $$
  $$\simeq \fC(\epsilon^{0}_{+}\Q^{H}_{X_{0}}, \fC(\epsilon^{1}_{+}\Q^{H}_{X_{1}}[-1],\fC(\cdots,\fC(\epsilon^{n-1}_{+}\Q^{H}_{X_{n-1}}[-(n-1)], \epsilon^{n}_{+}\Q^{H}_{X_{n}}[-(n-1)])))))[-1] \otimes j_{!}\cV$$
  $$\simeq \fC(\epsilon^{0}_{+}\Q^{H}_{X_{0}}\otimes j_{!}\cV, \fC(\epsilon^{1}_{+}\Q^{H}_{X_{1}}\otimes j_{!}\cV[-1],\fC(\cdots,\fC(\epsilon^{n-1}_{+}\Q^{H}_{X_{n-1}}\otimes j_{!}\cV[-(n-1)], \epsilon^{n}_{+}\Q^{H}_{X_{n}}\otimes j_{!}\cV[-(n-1)])))))[-1]$$
  From proper base change and the projection formula,
      $$\epsilon^{\bullet}_{+}\Q^{H}_{X_{\bullet}} \otimes j_{!}\cV \simeq j_{!}\bigg( j^{*}\epsilon^{\bullet}_{+}\Q^{H}_{X_{\bullet}} \otimes \cV \bigg) \simeq j_{!}\bigg( \epsilon^{\bullet}_{+}\Q^{H}_{U_{\bullet}} \otimes \cV \bigg) \simeq j_{!}\bigg( \epsilon^{\bullet}_{+}(\Q^{H}_{U_{\bullet}} \otimes (\epsilon^{\bullet})^{*}\cV) \bigg) \simeq  j_{!}\bigg(\epsilon^{\bullet}_{+}\cV_{\bullet}\bigg) \simeq \epsilon^{\bullet}_{+}j_{\bullet!}\cV_{\bullet}$$
        Thus, 
        $$j_{!}\cV \simeq  \fC( \epsilon^{0}_{+}j_{0!}\cV_{0}, \fC( \epsilon^{1}_{+}j_{1!}\cV_{1}[-1],\fC(\cdots,\fC( \epsilon^{n-1}_{+}j_{(n-1)!}\cV_{n-1}[-(n-1)],  \epsilon^{n}_{+}j_{n!}\cV_{n}[-(n-1)])))))[-1].$$
  Since $X_{\bullet}$ is smooth, there are quasi-isomorphisms
     $$Gr^{F}_{-p}DR\bigg((j_{\bullet})_{!}\cV_{\bullet}\bigg) \simeq \bigg(\bigoplus_{i=1}^{N-1}\Omega^{p}_{X_{\bullet}}\log(E_{\bullet})\otimes \cO_{X_{\bullet}}(D^{>0}_{\bullet i}))\otimes \cL_{\bullet}^{-i}\bigg)[-p]$$
     $$Gr^{F}_{-p}DR\bigg(\epsilon^{\bullet}_{+}(j_{\bullet})_{!}\cV_{\bullet}\bigg) \simeq \bR \epsilon^{\bullet}_{*}\bigg(\bigoplus_{i=1}^{N-1}\Omega^{p}_{X_{\bullet}}\log(E_{\bullet})\otimes \cO_{X_{\bullet}}(D^{>0}_{\bullet i}))\otimes \cL_{\bullet}^{-i}\bigg)[-p]$$
    $$ \simeq\bigg(\bigoplus_{i=1}^{N-1} \bR \epsilon^{\bullet}_{*}(\Omega^{p}_{X_{\bullet}}\log(E_{\bullet})\otimes \cO_{X_{\bullet}}(D^{>0}_{\bullet i}))\bigg)\otimes \cL^{-i}[-p].$$
    As filtered $D_{U_{\bullet}}$-modules, there is a splitting $\cV_{\bullet} \cong \displaystyle \bigoplus_{i=1}^{N-1}\cH_{\bullet i}$ and there are quasi-isomorphisms
$$Gr^{F}_{-p}DR(j_{\bullet !}\cH_{\bullet i})[p-n] \simeq(\Omega^{p}_{X_{\bullet}}\log(E_{\bullet})\otimes \cO_{X_{\bullet}}(D^{>0}_{\bullet i}))\otimes \cL_{\bullet}^{-i}[-p] \quad \text{for $1 \leq i \leq N-1.$}$$
      With the identifications above, we obtain the following quasi-isomorphisms,
     $$ \pi_{*} \OmpY \simeq  \Omp \oplus Gr^{F}_{-p}DR(j_{!}\cV)[p]$$
     $$\simeq \Omp \oplus Gr^{F}_{-p}DR\bigg( \fC( \epsilon^{0}_{+}j_{0!}\cV_{0}, \fC( \epsilon^{1}_{+}j_{1!}\cV_{1}[-1],\fC(\cdots,\fC( \epsilon^{n-1}_{+}j_{(n-1)!}\cV_{n-1}[-(n-1)],  \epsilon^{n}_{+}j_{n!}\cV_{n}[-(n-1)])))))[-1]\bigg)[p]$$
      $$\simeq\Omp \oplus \bigoplus_{i=1}^{N-1} \fC(\bR \epsilon^{0}_{*}(\Omega^{p}_{X_{0}}\log(E_{0}) \otimes \cO_{X_{0}}(D^{>0}_{0i}), \fC(\bR \epsilon^{1}_{*}(\Omega^{p}_{X_{1}}\log(E_{1}) \otimes \cO_{X_{1}}(D^{>0}_{1i})[-1]),\fC(\cdots))[-1] \otimes \cL^{-i}$$
      For $1 \leq i \leq N-1$, the complexes
      $$ \fC(\bR \epsilon^{0}_{*}(\Omega^{p}_{X_{0}}\log(E_{0}) \otimes \cO_{X_{0}}(D^{>0}_{0i}), \fC(\bR \epsilon^{1}_{*}(\Omega^{p}_{X_{1}}\log(E_{1}) \otimes \cO_{X_{1}}(D^{>0}_{1i})[-1]),\fC(\cdots))[-1]$$
      are independent of the simplicial resolution (up to quasi-isomorphism). 
\begin{defn}
  Take an arbitrary line bundle  $\cL$ on $X$. Assume there is a positive integer $N$ such that there exists a global section $s \in \Gamma(X, \cL^{N})$ that defines an effective Cartier divisor $D.$  Let $\epsilon: X_{\bullet} \rightarrow X$ be a cubical hyperresolution of the pair $(X,D)$. Then $\underline{\Omega}^{p}_{X}( \log D ) (D_{i}^{>0} )$ is defined to be (up to quasi-isomorphism) the complex
      $$\fC(\bR \epsilon^{0}_{*}(\Omega^{p}_{X_{0}}\log(E_{0}) \otimes \cO_{X_{0}}(D^{>0}_{0i}), \fC(\bR \epsilon^{1}_{*}(\Omega^{p}_{X_{1}}\log(E_{1}) \otimes \cO_{X_{1}}(D^{>0}_{1i})[-1]),\fC(\cdots))[-1].$$
\end{defn}

       \begin{thm}
       Let $\cL$ be an arbitrary line bundle on an algebraic variety $X$ of dimension $n$. Suppose that for a positive integer $N$ there exists a global section $s \in \Gamma(X, \cL^{N})$
    that defines an effective Cartier divisor $D.$ Let $\pi: Y \rightarrow X$ denote the $N$-fold cyclic covering of $D$. For $0 \leq p \leq n$, 
    $$\pi_{*}\underline{\Omega}^{p}_{Y} \simeq \Omp \oplus \bigoplus_{i=1}^{N-1}(\underline{\Omega}^{p}_{X}(\log D) (D_{i}^{>0})) \otimes \cL^{-i}.$$
  \end{thm}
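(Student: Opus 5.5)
The plan is to assemble the statement from three earlier ingredients: Theorem \ref{MainThm1}, the cone description of $\Q^{H}_{X}$ in Theorem \ref{Q-thm}, and the snc computation of Section \ref{S3}.

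First, by Theorem \ref{MainThm1}, written without the shift as in Section \ref{singularCyclic}, we have $\pi_{+}\Q^{H}_{Y}\simeq \Q^{H}_{X}\oplus j_{!}\cV$. Applying $Gr^{F}_{-p}DR$ and using Theorem \ref{direct} for the finite, hence proper, map $\pi$ gives
$$\pi_{*}\OmpY[-p]\simeq \Omp[-p]\oplus Gr^{F}_{-p}DR(j_{!}\cV).$$
It therefore suffices to identify $Gr^{F}_{-p}DR(j_{!}\cV)[p]$ with $\bigoplus_{i}\underline{\Omega}^{p}_{X}(\log D)(D^{>0}_{i})\otimes\cL^{-i}$.

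Second, fix a cubical hyperresolution $\epsilon:X_{\bullet}\to X$ adapted to $D$, so that on each component of $X_{i}$ the divisor $D_{i}=(\epsilon^{i})^{*}D$ is empty, everything, or has snc support $E_{i}$. Tensor the isomorphism of Theorem \ref{Q-thm} with $j_{!}\cV$. Tensor product is exact in each variable, so it commutes with iterated cones. Then use the projection formula and proper base change to rewrite $\epsilon^{i}_{+}\Q^{H}_{X_{i}}\otimes j_{!}\cV\simeq \epsilon^{i}_{+}j_{i!}\cV_{i}$, where $\cV_{i}=(\epsilon^{i})^{*}\cV$. This expresses $j_{!}\cV$ as the iterated cone $\fC(\epsilon^{0}_{+}j_{0!}\cV_{0},\fC(\epsilon^{1}_{+}j_{1!}\cV_{1}[-1],\dots))[-1]$.

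Third, since $Gr^{F}_{-p}DR$ is a triangulated functor, it commutes with cones. By Theorem \ref{direct} it also turns each $\epsilon^{i}_{+}$ into $\bR\epsilon^{i}_{*}$. On each smooth $X_{i}$, base change gives $\pi_{i+}\Q^{H}_{Y_{i}}\simeq \Q^{H}_{X_{i}}\oplus j_{i!}\cV_{i}$, so $\cV_{i}$ is precisely the $\cV$ of the cyclic cover $Y_{i}\to X_{i}$. Its underlying filtered module splits as $\bigoplus_{i'}\cH_{\bullet i'}$ with trivial Hodge filtration. The snc computation of Section \ref{S3}, namely Corollary \ref{LogCor} with the twist by $D^{>0}_{\bullet i'}$, then yields $Gr^{F}_{-p}DR(j_{\bullet!}\cV_{\bullet})\simeq \bigoplus_{i'}\Omega^{p}_{X_{\bullet}}(\log E_{\bullet})(D^{>0}_{\bullet i'})\otimes\cL_{\bullet}^{-i'}[-p]$. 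The projection formula pulls $\cL^{-i'}$ out of $\bR\epsilon^{\bullet}_{*}$. Regrouping the cones summand by summand gives exactly the definition of $\underline{\Omega}^{p}_{X}(\log D)(D^{>0}_{i})$, twisted by $\cL^{-i}$.

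The main obstacle is that the $\cH_{\bullet i}$ are not mixed Hodge modules; they lack a $\Q$-structure. So the splitting into $N-1$ summands has to be made compatible with the transition maps $\sum(-1)^{j}\epsilon_{ij}^{*}$ that define the iterated cone. One cannot simply split in $D^{b}MHM$. My first attempt would be to work on the level of filtered $\cD$-modules, or filtered differential complexes, where the $\mu_{N}$-action on $\pi_{*}\cO_{Y}$ gives eigenspace decompositions. These are functorial for the pullbacks $\epsilon_{ij}^{*}$ because each $Y_{i}\to X_{i}$ is the base change of $Y\to X$. The cone maps therefore respect the $\mu_{N}$-grading, and the degree-$i$ part of the total complex is the $i$-th summand.

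A secondary issue is the degenerate components of $X_{i}$ on which $D_{i}$ is the whole component. There $U_{i}$ is empty, and I would check that the corresponding terms vanish consistently with the convention in the definition. Finally, independence of the chosen hyperresolution follows because the left side $Gr^{F}_{-p}DR(j_{!}\cV)$ is intrinsic.
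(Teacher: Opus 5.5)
Your proposal is correct and follows the paper's proof essentially step for step: Theorem \ref{MainThm1} without the shift, tensoring the iterated-cone isomorphism of Theorem \ref{Q-thm} with $j_{!}\cV$, rewriting each term as $\epsilon^{i}_{+}j_{i!}\cV_{i}$ via proper base change and the projection formula, and then applying $Gr^{F}_{-p}DR$ together with the snc computation on each smooth $X_{i}$. The one place you go beyond the paper is your $\mu_{N}$-eigenspace argument for splitting the iterated cone summand by summand, a step the paper only asserts by saying that $\cV_{\bullet}\cong\bigoplus_{i}\cH_{\bullet i}$ as filtered $D_{U_{\bullet}}$-modules, so this is a useful justification rather than a different route.
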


      \begin{rmk}
         Let $\epsilon: X_{\bullet} \rightarrow X$ be cubical hyperresolution. If we choose $D$ sufficiently general, for example, when $D$ is a sufficiently general section of a basepoint-free linear system, then  $D_{\alpha} = D \times X_\alpha$ is a smooth divisor on $X_{\alpha}$, $D_{\alpha} \neq 0$ on each irreducible component, and 
         $$D_{\bullet}:= D \times X_{\bullet} \rightarrow D$$
         is a cubical hyperresolution. The variety $Y_{\alpha}$ from the cyclic covering $Y_{\alpha} \rightarrow X_{\alpha}$ is smooth, and the divisors $D^{>0}_{\alpha i}$ are all trivial. Thus, for $D$ sufficiently general,
        $$\pi_{*}\underline{\Omega}^{p}_{Y} \simeq \Omp \oplus \bigoplus_{i=1}^{N-1}\underline{\Omega}^{p}_{X}(\log D)\otimes \cL^{-i},$$
        where $\underline{\Omega}^{p}_{X}(\log D)$ is the $p^{th}$-graded piece of the Du Bois complex of the pair $(X, D)$ \cite{dubois}. This quasi-isomorphism was also recently shown by Kov\'acs  \cite[Thm. 6.2.(iv)]{kovacsINJ}.

     \end{rmk}

     \begin{prop}\label{DB-prop}
     Let $\pi:Y \rightarrow X$ be the $N$-fold cyclic covering of $D$ and assume $X$ has Du Bois singularities. Let $\epsilon:X_{\bullet} \rightarrow X$ be a cubical hyperresolution of the pair $(X, D)$, and $\pi_{i}:Y_{i} \rightarrow X_{i}$ the $N$-fold cyclic covering of $ (\epsilon^{i})^{*}D = D_{i}$. For each $i,$ assume $D_{i} \neq 0$ on each irreducible component of $X_{i}$. If $Y_{i}$ has Du Bois singularities for each $i$, then $Y$ has Du Bois singularities.
     \end{prop}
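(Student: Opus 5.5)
We need to show the natural map $\pi_{*}\cO_{Y} \rightarrow \pi_{*}\OmY$ is a quasi-isomorphism. Since $\pi$ is affine and $\pi_{*}\cO_{Y} \cong \cO_{X} \oplus \bigoplus_{i=1}^{N-1}\cL^{-i}$, and $X$ is Du Bois (so $\cO_{X} \simeq \Om$), the plan is to compare, summand by summand, the decomposition of the previous theorem,
$$\pi_{*}\OmY \simeq \Om \oplus \bigoplus_{i=1}^{N-1}\underline{\Omega}^{0}_{X}(\log D)(D^{>0}_{i}) \otimes \cL^{-i},$$
with $\pi_{*}\cO_{Y}$. It therefore suffices to show $\cO_{X} \rightarrow \underline{\Omega}^{0}_{X}(\log D)(D^{>0}_{i})$ is a quasi-isomorphism for each $1 \leq i \leq N-1$. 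Note that for $p = 0$ we have $\Omega^{0}_{X_{\bullet}}\log(E_{\bullet}) = \cO_{X_{\bullet}}$. So the defining iterated cone becomes
$$\fC\bigl(\bR\epsilon^{0}_{*}\cO_{X_{0}}(D^{>0}_{0i}), \fC(\bR\epsilon^{1}_{*}\cO_{X_{1}}(D^{>0}_{1i})[-1], \fC(\cdots))\bigr)[-1].$$

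First I apply the hypothesis to each $X_{k}$. Since $X_{k}$ is smooth (possibly disconnected), Corollary \ref{singCor}(1), applied on each component, gives that $Y_{k}$ Du Bois implies $\cO_{X_{k}} \rightarrow \bR\phi_{k*}\cO_{\tilde{X}_{k}}(D^{>0}_{k,i})$ is a quasi-isomorphism. Here $\phi_{k}$ is a log resolution of $(X_{k},D_{k})$. In our setup $D_{k}$ already has snc support on each component, since $D_{k}\neq 0$ on each component, which excludes the degenerate case where $D_{k}$ is the whole component and $U_{k}$ is empty. So I take $\phi_{k}=\mathrm{id}$, and the condition reads $\cO_{X_{k}}(D^{>0}_{k,i}) = \cO_{X_{k}}$, i.e. $D^{>0}_{k,i}=0$. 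Even if one does not trivialize this way, the conclusion is that each term $\bR\epsilon^{k}_{*}\cO_{X_{k}}(D^{>0}_{k,i})$ receives a quasi-isomorphism from $\bR\epsilon^{k}_{*}\cO_{X_{k}}$.

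Next I check compatibility with the cone maps. The natural inclusions $\cO_{X_{k}} \hookrightarrow \cO_{X_{k}}(D^{>0}_{k,i})$ are the maps $Gr^{F}_{0}DR(\epsilon^{k}_{+}\Q^{H}_{X_{k}}) \rightarrow Gr^{F}_{0}DR(\epsilon^{k}_{+}j_{k!}\cH_{k,i})$, up to twist by $\cL^{-i}$. These are induced by the natural morphism $\pi_{*}\cO_{Y_{k}} \rightarrow \pi_{*}\underline{\Omega}^{0}_{Y_{k}}$ projected onto the $\cL^{-i}$-isotypic part, for the $\mu_{N}$-action. That morphism is functorial in the hyperresolution maps $\epsilon_{kj}$. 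Hence the termwise quasi-isomorphisms assemble into a morphism of iterated cones
$$\fC\bigl(\bR\epsilon^{0}_{*}\cO_{X_{0}}, \fC(\bR\epsilon^{1}_{*}\cO_{X_{1}}[-1],\cdots)\bigr)[-1] \longrightarrow \underline{\Omega}^{0}_{X}(\log D)(D^{>0}_{i}).$$
Since it is termwise a quasi-isomorphism, it is a quasi-isomorphism, by induction on the nesting via the five lemma on cone triangles. By the cone description of $\Om$ this source is $\Om \simeq \cO_{X}$, using that $X$ is Du Bois. Composing, $\cO_{X} \rightarrow \underline{\Omega}^{0}_{X}(\log D)(D^{>0}_{i})$ is a quasi-isomorphism, and summing over $i$ gives that $Y$ is Du Bois.

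The main obstacle is the compatibility step. The cones are only defined up to non-unique isomorphism in a triangulated category, so a termwise quasi-isomorphism need not induce one on cones. To avoid this I would work at the level of the actual double complexes: the Dolbeault resolutions $\epsilon^{k}_{*}\cE^{0,\bullet}_{X_{k}}\otimes\cO(D^{>0}_{k,i})$ with the alternating pullback differentials. Equivalently, I would use the filtered total complex with $F^{k}$-graded pieces, as in Lemma \ref{ConeLemma}. Then the inclusion $\cO_{X_{\bullet}} \subseteq \cO_{X_{\bullet}}(D^{>0}_{\bullet i})$ is a genuine morphism of filtered complexes, a filtered quasi-isomorphism on graded pieces, hence a quasi-isomorphism. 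One also has to check that $D^{>0}_{\bullet i}$ are compatible under pullback by $\epsilon_{kj}$. When all $D^{>0}_{k,i}$ vanish, as forced above, this is automatic.
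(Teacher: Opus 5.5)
Your argument is correct and is essentially the paper's: the Du Bois hypothesis on each $Y_k$ forces the twisted terms on $X_k$ to be untwisted (you make this explicit via Corollary \ref{singCor}(1) with $\phi_k=\mathrm{id}$, giving $D^{>0}_{k,i}=0$), so the iterated cone defining $\underline{\Omega}^{0}_{X}(\log D)(D^{>0}_{i})$ collapses to $\Om\otimes\cL^{-i}\simeq\cL^{-i}$ because $X$ is Du Bois. Your extra care about the natural map $\pi_{*}\cO_{Y}\to\pi_{*}\OmY$ and about compatibility of the cone maps covers points the paper passes over, since the paper only exhibits an abstract isomorphism $\pi_{*}\OmY\simeq\pi_{*}\cO_{Y}$.
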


     \begin{proof}
         If $Y_{\bullet}$ has Du Bois singularities, then 
         $$(\pi_{\bullet})_{*}Gr^{F}_{0}DR(\Q^{H}_{Y_{\bullet}})\simeq \bigoplus_{i=0}^{N-1}\cO_{X_{\bullet}}\otimes\epsilon^{*}_{\bullet}\cL^{-i}.$$
          $$\bR\epsilon^{\bullet}_{*}(\pi_{\bullet})_{*}Gr^{F}_{0}DR(\Q^{H}_{Y_{\bullet}})\simeq \bigoplus_{i=0}^{N-1}\bR \epsilon^{\bullet}_{*}(\cO_{X_{\bullet}} \otimes \epsilon^{*}_{\bullet}\cL^{-i}) \simeq \bigoplus_{i=0}^{N-1}\bR \epsilon^{\bullet}_{*}(\cO_{X_{\bullet}}) \otimes \cL^{-i}.$$
         Thus
  $$\pi_{*}\underline{\Omega}^{0}_{Y} \simeq \Om \oplus \bigoplus_{i=1}^{N-1}(\Om ( \log D)(D_{i}^{>0})) \otimes \cL^{-i}  \simeq \Om \oplus \bigoplus_{i=1}^{N-1}\Om\otimes \cL^{-i}$$
         
         $$\bigoplus_{i=0}^{N-1}\Om\otimes \cL^{-i}\simeq \bigoplus_{i=0}^{N-1} \cL^{-i} \cong \pi_{*}\cO_{Y}$$
         Since $\pi: Y \rightarrow X$ is affine, we can conclude that $Y$ has Du Bois singularities.
          \end{proof}

           \begin{prop}
     Let $\pi:Y \rightarrow X$ be the $N$-fold cyclic covering of $D$ and assume $X$ has rational singularities. Let $\epsilon:X_{\bullet} \rightarrow X$ be cubical hyperresolution of the pair $(X, D)$ and $X^{rs}_{0} \subseteq X_{0}$ the subvariety such that $X^{res}_{0} \rightarrow X$ is a resolution of singularities. Let
     $$\pi_{0}:Y^{res}_{0} \rightarrow X^{res}_{0}$$
     the $N$-fold cyclic covering of $(\epsilon^{0})^{*}D \cap X^{res}_{0} = D_{0} \cap X^{res}_{0}$. If $Y^{res}_{0}$ has rational singularities, then $Y$ has rational singularities.
     \end{prop}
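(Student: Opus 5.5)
The plan is to prove rationality of $Y$ directly from the definition: build a resolution of $Y$ from a resolution of $Y^{res}_{0}$, then compare $\pi_{*}$ of its derived pushforward with $\pi_{*}\cO_{Y}$ using the decomposition $\pi_{*}\cO_{Y}\cong\bigoplus_{i=0}^{N-1}\cL^{-i}$. Write $X' := X^{res}_{0}$ and $f:=\epsilon^{0}|_{X'}:X'\rightarrow X$, a resolution of singularities. Write $Y' := Y^{res}_{0}$ with cyclic covering map $\pi_{0}:Y'\rightarrow X'$.

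The first step is to identify $Y'$ with the base change $(Y\times_{X}X')_{red}$. Both are $\Spec\big(\bigoplus_{i=0}^{N-1} f^{*}\cL^{-i}\big)$, with multiplication determined by $f^{*}s$. This gives a proper map $h:Y'\rightarrow Y$ with $\pi\circ h = f\circ \pi_{0}$.

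Next I show $h$ is birational. Let $V\subseteq X$ be the dense open set over which $f$ is an isomorphism. Over $V$, the map $h$ is the base change of an isomorphism, hence an isomorphism. Since $\pi$ is finite and flat, it is open, so $\pi^{-1}(V)$ is dense in $Y$. Hence $h$ is birational.

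Now choose a resolution $\psi:Z\rightarrow Y'$. Then $g:=h\circ\psi:Z\rightarrow Y$ is a resolution of singularities of $Y$. Since $Y'$ has rational singularities, $\bR\psi_{*}\cO_{Z}\simeq\cO_{Y'}$, and therefore $\bR g_{*}\cO_{Z}\simeq \bR h_{*}\cO_{Y'}$.

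Apply $\pi_{*}$, which is exact because $\pi$ is affine, and use $\pi\circ h=f\circ\pi_{0}$ with $\pi_{0}$ affine:
$$\pi_{*}\bR g_{*}\cO_{Z}\simeq \bR f_{*}\pi_{0*}\cO_{Y'}\simeq \bR f_{*}\Big(\bigoplus_{i=0}^{N-1} f^{*}\cL^{-i}\Big)\simeq \bigoplus_{i=0}^{N-1}\bR f_{*}\cO_{X'}\otimes\cL^{-i},$$
where the last step is the projection formula. Since $X$ has rational singularities, $\bR f_{*}\cO_{X'}\simeq\cO_{X}$. Hence the right-hand side is $\bigoplus_{i=0}^{N-1}\cL^{-i}\cong\pi_{*}\cO_{Y}$.

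It remains to check that this chain of isomorphisms is $\pi_{*}$ of the natural map $\cO_{Y}\rightarrow\bR g_{*}\cO_{Z}$. Each identification is induced by pullback of functions, and it respects the $\mu_{N}$-grading (the eigenspace decomposition by $i$). So the composite is the direct sum over $i$ of the natural maps $\cL^{-i}\rightarrow \bR f_{*}f^{*}\cL^{-i}$, each of which is a quasi-isomorphism. Since $\pi$ is affine, a map of complexes on $Y$ is a quasi-isomorphism if and only if its image under $\pi_{*}$ is. Hence $\cO_{Y}\rightarrow \bR g_{*}\cO_{Z}$ is a quasi-isomorphism, and $Y$ has rational singularities.

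The main obstacle is the first step together with this compatibility check. First, one needs $Y\times_{X}X'$ to be reduced (or at least that its reduction still has $\pi_{0*}\cO_{Y'}=\bigoplus_{i} f^{*}\cL^{-i}$), which is implicitly part of the hypothesis that $Y'$ is the cyclic covering of $f^{*}D$. I would handle this by noting that $f^{*}D$ is an effective Cartier divisor on the smooth $X'$, so the $\Spec$ construction is already the cyclic covering of $f^{*}D$ in the sense of this paper. Second, one must track carefully that the isomorphism obtained is the natural one rather than an abstract one.
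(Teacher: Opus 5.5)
Your argument is correct, and it takes a genuinely different route from the paper's.

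\textbf{The paper's route.} The paper stays inside the Hodge-module framework. It writes $\pi_{*}\bR g_{*}\cO_{\tY}\simeq\pi_{*}\cD(\underline{\Omega}^{n}_{Y})$ and splits this using the decomposition $\pi_{*}\underline{\Omega}^{n}_{Y}\simeq\underline{\Omega}^{n}_{X}\oplus\bigoplus_{i}\underline{\Omega}^{n}_{X}(\log D)(D^{>0}_{i})\otimes\cL^{-i}$. It then notes that in top degree only the component $X^{res}_{0}$ of the hyperresolution contributes, and dualizes each summand to $\bR\epsilon^{0}_{*}\cO_{X_{0}}(D^{\geq 0}_{0i})\otimes\cL^{-i}$. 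Finally it applies Corollary \ref{singCor} on the smooth $X_{0}$, where $D_{0}$ has simple normal crossing support so one may take $\phi=\mathrm{id}$. This gives $D^{\geq 0}_{0i}=0$, so each summand is $\bR\epsilon^{0}_{*}\cO_{X_{0}}\otimes\cL^{-i}\simeq\cL^{-i}$.

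\textbf{Your route.} You bypass all of this by noticing that $Y^{res}_{0}=Y\times_{X}X^{res}_{0}$ is a birational modification of $Y$. A resolution of $Y^{res}_{0}$ is therefore a resolution of $Y$, and everything reduces to affine base change and the projection formula: $\pi_{*}\bR h_{*}\cO_{Y'}\simeq\bR f_{*}f^{*}\pi_{*}\cO_{Y}\simeq\pi_{*}\cO_{Y}\otimes\bR f_{*}\cO_{X'}$.

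\textbf{What each route buys.}
\begin{enumerate}
\item Your argument needs no mixed Hodge modules, no duality for the Du Bois complex, and no Corollary \ref{singCor}.
\item It works for an arbitrary resolution $f$ of $X$, not just one sitting inside a cubical hyperresolution of the pair with $D_{0}$ of simple normal crossing support.
\item It actually checks that the final isomorphism is induced by the natural map $\cO_{Y}\rightarrow\bR g_{*}\cO_{Z}$, which the paper leaves implicit. You do not even need the $\mu_{N}$-grading for this: since $\pi_{*}\cO_{Y}$ is locally free, the unit $\pi_{*}\cO_{Y}\rightarrow\bR f_{*}f^{*}\pi_{*}\cO_{Y}$ is just $\mathrm{id}\otimes(\cO_{X}\rightarrow\bR f_{*}\cO_{X'})$.
\item The paper's route buys uniformity with the rest of the paper. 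It parallels Proposition \ref{DB-prop} and expresses the summands of $\pi_{*}\cD(\underline{\Omega}^{n}_{Y})$ through the hyperresolution data.
\end{enumerate}

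\textbf{Two details to fill in.}
\begin{enumerate}
\item Birationality of $h$ also requires $h^{-1}(\pi^{-1}(V))$ to be dense in $Y'$. This follows from the same openness argument applied to the finite flat map $\pi_{0}$.
\item $Y\times_{X}X'$ is automatically reduced, which settles the point you flagged. Locally it is the hypersurface $\{t^{N}-f^{*}s=0\}$ in the smooth $X'\times\mathbb{A}^{1}$, hence Cohen--Macaulay. It is also generically \'etale over $X'$, because $f^{*}s$ is a nonzerodivisor and we are in characteristic zero.
\end{enumerate}
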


     \begin{proof}
         Since $X$ is normal and the problem is local, it suffices to consider the case where $X$ is irreducible. Also, because we are considering the top differential,
         $$\underline{\Omega}^{n}_{X}(\log D)(D_{i}^{>0}) = \underline{\Omega}^{n}_{X}(\log D)(D_{i}^{>0})\bigg \vert_{X_{0}^{res}}$$
         So, we may assume $X_{0} = X^{res}_{0}.$ If $g:\tY \rightarrow Y$ is a resolution of singularities, then
         $$\bR g_{*}\cO_{\tY} \simeq \cD(\underline{\Omega}^{n}_{Y})$$
         $$\pi_{*}\bR g_{*}\cO_{\tY} \simeq \pi_{*}\cD(\underline{\Omega}^{n}_{Y}) \simeq \cD(\underline{\Omega}^{n}_{X}) \oplus \bigoplus_{i=1}^{N-1}\cD(\underline{\Omega}^{n}_{X}(\log D) (D_{i}^{>0}) \otimes \cL^{-i}).$$
         For $1 \leq i \leq N-1$, there are quasi-isomorphism
         $$ \cD(\underline{\Omega}^{n}_{X}(\log D)(D_{i}^{>0}) \otimes \cL^{-i})= \cD(\bR\epsilon^{0}_{*}(\Omega^{n}_{X_{0}}(E_{0}) \otimes \cO_{X_{0}}(D^{>0}_{0i}))\otimes \cL^{-i})\simeq \bR \epsilon^{0}_{*}( \cO_{X_{0}}(D^{\geq 0}_{0i}))\otimes \cL^{-i}.$$
      If $Y_{0}$ and $X$ have rational singularities, then by Corollary \ref{singCor},
    $$\bR\epsilon^0_{*}( \cO_{X_{0}}(D^{\geq 0}_{0i})) \simeq \bR\epsilon^0_{*}\cO_{X_{0}} \simeq \cO_{X} \quad \text{for $1 \leq i \leq N-1.$}$$
 Thus
    $$ \pi_{*}\bR g_{*}\cO_{\tY} \simeq \pi_{*}\cD(\underline{\Omega}^{n}_{Y}) \simeq \bigoplus_{i=0}^{N-1}\cL^{-i} \cong \pi_{*}\cO_{Y}.$$
    Since $\pi:Y \rightarrow X$ is affine, $Y$ has rational singularities.

     \end{proof}

             \begin{ex}
    In Proposition \ref{DB-prop}, it was assumed $D_{i} \neq 0$ for each irreducible component of $X_{i}$. We will give an example of why this assumption is needed. Consider the $2$-fold cyclic covering 
    $$\pi: Y = \Spec \bigg( \C[x, y, t]/\langle y^{2}-x^2, t^2-xy\rangle \bigg)\rightarrow \Spec\bigg(\C[x,y]/\langle  y^2 -x^2 \rangle\bigg)=X$$
    with $D = \{xy=0\}$ on $X$. If $\phi:\tX \rightarrow X$ is the blow-up of $p=(0,0)$, then $\tX$ is a resolution of singularities. Let $\{q_{1}, q_{2}\}=E \subset \tX$ denote the divisor. Then
    $$\bT E \arrow[r, shift left] \arrow[r, shift right] & \tX \coprod p \arrow[r] & X \eT$$
    is a simplicial resolution of $X.$ Because the pull-back of $D$ to $E$ and $p$ is all of $E$ and $p$, respectively, we have the identifications
    $$\pi_{*}\underline{\Omega}^{0}_{Y} \simeq \Om \oplus Gr^{F}_{0}DR(j_{!}\cV) \simeq \cO_{X} \oplus Gr^{F}_{0}DR(j_{!}\cV)$$
    $$Gr^{F}_{0}DR(j_{!}\cV) \simeq Gr^{F}_{0}DR\bigg[\fC (\epsilon^{0}_{+}(\epsilon^{0})^{*}(j_{!}\cV), \epsilon^{1}_{+}(\epsilon^{1})^{*}(j_{!}\cV))\bigg][-1] \simeq Gr^{F}_{0}DR \bigg[\fC(\phi_{+}\phi^{*}(j_!\cV), 0) \bigg][-1]$$
    $$\simeq  \bR\phi_{*}Gr^{F}_{0}DR((\phi^{*})(j_!\cV))$$
   The logarithmic connection
    $$\phi^{*}\cL^{-1} \rightarrow \Omega^{1}_{\tX}(\log E) \otimes \phi^{*}\cL^{-1}$$
    has the eigenvalues of $res \nabla$ along the irreducible components of $E$ to be contained in $(0,1]$. So, 
    $$\phi_{*}Gr^{F}_{0}DR((\phi^{*})(j_!\cV)) \simeq \phi_{*}\phi^{*}\cL^{-1} \cong \phi_{*}\cO_{\tX} \otimes \cL^{-1}$$
    Thus, $Y$ does not have Du Bois singularities,
     $$\pi_{*}\cH^{0}(\underline{\Omega}^{0}_{Y}) \simeq \cO_{X} \oplus(\phi_{*}\cO_{\tX} \otimes \cL^{-1}) \not \cong \cO_{X} \oplus \cL^{-1} \cong \pi_{*}\cO_{Y}.$$
    
         \end{ex}

       \section{Vanishing Theorems}\label{S5}
       For the last section of this paper,  $X$ will be a projective variety, possibly singular, of dimension $n$.  Again, we are in the setting of \ref{setting} and $\pi:Y \rightarrow X$ is the $N$-fold cyclic covering of $D$. If $j: U= X \backslash D \hookrightarrow X$ is the natural map, then from Section \ref{S2} there exists $\cV \in D^{b}MHM(U)$  such that we have the following isomorphism in the derived category of mixed Hodge modules,
    $$\pi_{+}\Q^{H}_{Y}[n] \simeq \Q^{H}_{X}[n] \oplus j_{!}\cV.$$
    Thus obtaining isomorphisms
    $$ H^{i}(Y, \Q) \cong H^{i}(X, \Q) \oplus H^{i-n}_{c}(U, rat(\cV)) \quad \text{for $i \in \Z$.}$$ 
    Since the morphism $\pi:Y \rightarrow X$ is finite, the functor $\pi_{+}:D^{b}MHM(Y) \rightarrow D^{b}MHM(X)$ is exact. So, we must have $rat(\cV) \in \hspace{.01in} ^{p}D^{\leq 0}(U)$ because $\Q_{Y}[n] \in \hspace{.01in}^{p}D^{\leq 0}(Y)$. Moreover, because the map $\pi:Y \backslash \pi^{*}D \rightarrow X \backslash D$ is \'etale, if we set
    $$\lcdef(U) = \max\{\ell \in \N_{0}| \hspace{.01in}^{p} \cH^{-\ell}(\Q_{U}[n])\neq 0\}$$
    which is called \emph{the local cohomological defect of $U$}, then $^{p}\cH^{i}(rat(\cV)) = 0$ for $i< -\lcdef(U).$ For more information on the local cohomological defect, see \cite{PopaShen}. So, if $U$ is affine, then by an application of Artin's vanishing theorem for perverse sheaves, see \cite[Thm. 4.1.1]{BBD}, we have
    $$ H^{i-n}_{c}(U, rat(\cV)) = 0 \quad \text{for $i-n<-\lcdef(U)$.}$$ 
    The $\Q$-vector space $H^{i-n}_{c}(U, rat(\cV))$ has a mixed Hodge structure by \cite{saito2}. From our discussion in Section \ref{S4}, there is a decomposition
    $$ H^{i-n}_{c}(U, rat(\cV)) \otimes \C \cong \bigoplus _{p \in \Z}H^{i-p}(X, Gr^{F}_{-p}DR( j_{!}\cV))$$
    $$\cong \bigoplus _{p \in \Z} \bigoplus_{1 \leq i \leq N-1}H^{i-p}(X,\underline{\Omega}^{p}_{X}(\log D) (D_{i}^{>0})) \otimes \cL^{-i}).$$
    So, if $i<  n - \lcdef(U)$, then 
    $$H^{i-p}(X,\underline{\Omega}^{p}_{X}(\log D) (D_{1}^{>0})) \otimes \cL^{-1}) = 0.$$
    
\begin{thm}
       Let $\cL$ be an arbitrary line bundle on a projective variety $X$ of dimension $n$. Suppose that for a positive integer $N$ there exists a global section $s \in \Gamma(X, \cL^{N})$
    that defines an effective Cartier divisor $D$ such that $U = X \backslash D$ is an affine variety. If $\lcdef(U)$ is the local cohomological defect of $U$, then
    $$ H^{q}(X,\underline{\Omega}^{p}_{X}(\log D) (D_{1}^{>0})) \otimes \cL^{-1}) = 0 \quad \text{for $p + q < n - \lcdef(U)$.}$$
\end{thm}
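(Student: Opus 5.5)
The plan is to combine three things already in place: the decomposition of Theorem \ref{MainThm1}, Artin vanishing for the perverse object underlying $\cV$ on the affine $U$, and the Hodge-theoretic splitting of $H^{\bullet}_{c}(U,rat(\cV))\otimes\C$ into graded pieces of the de Rham complex of $j_{!}\cV$.

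First, bound the perverse amplitude of $rat(\cV)$. By Theorem \ref{MainThm1} we have $\pi_{+}\Q^{H}_{Y}[n]\simeq\Q^{H}_{X}[n]\oplus j_{!}\cV$. Restricting to $U$ gives $\pi_{+}\Q^{H}_{U'}[n]\simeq \Q^{H}_{U}[n]\oplus\cV$, where $\pi:U'\to U$ is finite \'etale. Since $\pi$ is finite, $\pi_{+}$ is t-exact. Since $\pi$ is \'etale, $\Q_{U'}[n]=\pi^{*}\Q_{U}[n]$, so the perverse cohomology sheaves of $\Q_{U'}[n]$ vanish exactly where those of $\Q_{U}[n]$ do, namely outside $[-\lcdef(U),0]$. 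As $rat(\cV)$ is a direct summand, ${}^{p}\cH^{k}(rat(\cV))=0$ for $k\notin[-\lcdef(U),0]$.

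Next, apply Artin vanishing on the affine $U$ \cite[Thm. 4.1.1]{BBD}. For a perverse sheaf $P$ it gives $H^{k}_{c}(U,P)=0$ for $k<0$. Using the perverse truncation spectral sequence, with amplitude $[-\lcdef(U),0]$, we get
$$H^{k}_{c}(U,rat(\cV))=0 \quad\text{for } k<-\lcdef(U).$$
Since $X$ is projective, $H^{k}_{c}(U,rat(\cV))=H^{k}(X,rat(j_{!}\cV))$. The strictness of the Hodge filtration on cohomology of $a_{X+}j_{!}\cV$ (Saito \cite{saito2}) gives
$$H^{k}(X,rat(j_{!}\cV))\otimes\C\cong\bigoplus_{p}\mathbb{H}^{k}(X,Gr^{F}_{-p}DR(j_{!}\cV)).$$
Hence each summand vanishes when $k<-\lcdef(U)$.

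Finally, identify the graded pieces using Section \ref{singularCyclic}. With the shift normalization $\pi_{+}\Q^{H}_{Y}[n]$, we have $Gr^{F}_{-p}DR(j_{!}\cV)\simeq\bigoplus_{i}\underline{\Omega}^{p}_{X}(\log D)(D^{>0}_{i})\otimes\cL^{-i}[n-p]$, and the $i=1$ term is a direct summand. So
$$\mathbb{H}^{k}(X,Gr^{F}_{-p}DR(j_{!}\cV))\supseteq H^{k+n-p}\big(X,\underline{\Omega}^{p}_{X}(\log D)(D^{>0}_{1})\otimes\cL^{-1}\big).$$
Setting $q=k+n-p$, the condition $k<-\lcdef(U)$ becomes $p+q<n-\lcdef(U)$, which is the statement.

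The main obstacle is bookkeeping. The shifts between the $[n]$-normalized Theorem \ref{MainThm1} and the unshifted conventions of Section \ref{singularCyclic} must be reconciled so that the indices land exactly on $p+q$ rather than off by $n$. It also needs a careful justification that the Hodge decomposition of $H_{c}$ really splits into hypercohomology of the $Gr^{F}$ pieces; that step requires the degeneration/strictness statement for $X$ projective, where $a_{X}$ is proper.
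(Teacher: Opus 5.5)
Your proposal is correct and follows the paper's proof essentially step for step. The paper likewise bounds the perverse amplitude of $rat(\cV)$ by $[-\lcdef(U),0]$ using that $\pi$ is finite and \'etale over $U$, applies Artin vanishing on the affine $U$ to get $H^{k}_{c}(U,rat(\cV))=0$ for $k<-\lcdef(U)$, splits $H^{k}_{c}(U,rat(\cV))\otimes\C$ into the hypercohomology of the pieces $Gr^{F}_{-p}DR(j_{!}\cV)$, and reads off the $i=1$ summand with the shift $[n-p]$; your substitution $q=k+n-p$ agrees with the paper's $q=i-p$ under $k=i-n$.

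Your write-up additionally justifies three points the paper only asserts: t-exactness of \'etale pullback, the perverse truncation spectral sequence needed to apply Artin vanishing to a complex, and $E_{1}$-degeneration on the projective $X$.
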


\begin{thm}\label{nefthm}
    Let $X$ be an irreducible projective variety of dimension $n$, and $\cL$ a big and nef line bundle on $X$.  If $\lcdef(X)$ is the local cohomolgical defect of $X$, then
    $$ H^{i}(X,\underline{\Omega}^{0}_{X} \otimes \cL^{-1}) = 0 \quad \text{for $ i< n - \lcdef(X)$.}$$
\end{thm}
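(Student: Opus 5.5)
The plan is to reduce to the previous theorem (vanishing for $p+q<n-\lcdef(U)$ when $U$ is affine) via Kodaira's lemma and a cyclic covering trick. Since $\cL$ is big and nef, we can write $\cL^{m} \cong \cA \otimes \cO_X(F)$ for some $m$, with $\cA$ ample and $F$ effective. Replacing $\cA$ by a large multiple, we get a section $s\in\Gamma(X,\cL^{N})$ for suitable $N$ whose divisor is $D=D_0+kF$, where $D_0$ is a sufficiently general member of a very ample, basepoint-free linear system. Then $U=X\backslash D\subseteq X\backslash D_0$; we would like $U$ to be affine, which holds if $\operatorname{Supp} D$ supports an ample divisor. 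So we choose $D$ to be the divisor of a section of $\cA^{a}\otimes\cO_X(bF)$ with $\cA^{a}$ ample, arranging $\operatorname{Supp}(D)\supseteq\operatorname{Supp}(D_0)$ with $D_0$ ample; then the complement is affine. Moreover $\lcdef(U)\le\lcdef(X)$, since $\lcdef$ is local and $U$ is open. Applying the previous theorem with $p=0$, $q=i$ gives
$$H^{i}\big(X,\underline{\Omega}^{0}_{X}(\log D)(D_1^{>0})\otimes\cL^{-1}\big)=0 \quad\text{for } i<n-\lcdef(X).$$

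It remains to compare $\underline{\Omega}^{0}_{X}(\log D)(D_1^{>0})$ with $\Om$. We use the cubical hyperresolution description from the Definition in \S\ref{singularCyclic}: on each smooth $X_\alpha$, the $p=0$ term is $\cO_{X_\alpha}(D^{>0}_{\alpha 1})$, with $D^{>0}_{\alpha 1}=\sum b_j E_j$, where $b_j=\lfloor a_j/N\rfloor$ if $N\nmid a_j$ and $a_j/N-1$ otherwise. The key choice is to make the coefficients of $D$ small relative to $N$: take $N$ large (replace $s$ by $s^{k}$ after adjusting, or pick the Kodaira decomposition with multiplicities $a_j<N$ along every component on every $X_\alpha$). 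Then $b_j=0$ whenever $a_j<N$. Also $\underline{\Omega}^0$ with logarithmic poles is just $\cO$, so $\Omega^0_{X_\alpha}(\log E_\alpha)=\cO_{X_\alpha}$ wherever $D_\alpha$ is a proper divisor. For this to work, the hyperresolution must be chosen so that no component of $X_\alpha$ lies in $D$. This is arranged by taking $D$ general enough, as in the Remark after the Theorem of \S\ref{singularCyclic}: the generic part $D_0$ pulls back to a divisor not containing components, while $F$ is fixed. The components of $X_\alpha$ mapping into $\operatorname{Supp}F$ are the problem. For these I would instead keep the full term $\cO_{X_\alpha}$ by moving $F$ in its class, or by taking the multiple so that the contribution is $\cO$ anyway. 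With all twists trivial, the cone description reproduces exactly the cone presentation of $\Om$, so $\underline{\Omega}^{0}_{X}(\log D)(D_1^{>0})\simeq\Om$, and the vanishing follows.

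The main obstacle is this last comparison, in two respects. First, we need the coefficients $a_j$ of the pullbacks of $D$ to the $X_\alpha$ to be $<N$ while keeping $U$ affine and $\cL^N=\cO(D)$. Second, and more seriously, we must handle components of the hyperresolution contained in $D$ (over $\operatorname{Supp}F$), where the twisted complex contributes $0$ rather than $\cO$. I would first try to choose the hyperresolution after fixing $D$ so that it is a hyperresolution of the pair $(X,D)$ adapted with $D_\alpha\neq 0$ as a divisor on every component, mimicking the hypothesis of Proposition \ref{DB-prop}. Should this fail, I would run a limiting argument: for $N\gg0$ the divisor $\frac1N D$ has all multiplicities in $[0,1)$, so $D_1^{>0}=0$ on all smooth pieces meeting $D$ properly. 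Any remaining discrepancy is then supported on $\operatorname{Supp}F$, and I would control it by an exact triangle together with induction on dimension.
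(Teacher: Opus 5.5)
Your outline is the paper's own argument: Kodaira's lemma, a cyclic cover branched along $D=F+H_N$ with $F$ fixed and $H_N$ general, the preceding vanishing theorem with $p=0$ on the affine complement, $\lcdef(U)\le\lcdef(X)$, and $N\gg0$ to make the twists $D^{>0}_{k1}$ trivial. The step you single out as the main obstacle is a genuine gap, and none of your fallbacks can close it.

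If a component $X_k^\alpha$ of the hyperresolution satisfies $\epsilon^k(X_k^\alpha)\subseteq D$, then $U_k\cap X_k^\alpha=\emptyset$. So $j_{k!}\cV_k$ vanishes on $X_k^\alpha$, and the cone defining $\underline{\Omega}^0_X(\log D)(D_1^{>0})$ has $0$ where $\Om$ has $\bR\epsilon^k_*\cO_{X_k^\alpha}$; enlarging $N$ does not change this. Such components are forced in general:
\begin{itemize}
\item \emph{Moving $F$ is impossible.} If $C\subseteq X$ is a complete connected curve with $\cL|_C\cong\cO_C$, then every $D=\operatorname{div}(s)$ with $s\in\Gamma(X,\cL^N)$ and $X\setminus D$ affine contains $C$. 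Otherwise $s|_C$ is a nonzero constant, and $C$ would be a complete curve closed in an affine variety.
\item \emph{Avoiding $D$ in the hyperresolution is impossible.} If moreover $C\subseteq\operatorname{Sing}X$, every hyperresolution has pieces over $C$.
\item \emph{Induction on dimension fails.} $\cL$ is trivial, not big, on those pieces.
\end{itemize}
The paper's proof has the same hole. It arranges normal crossings only for those $k$ with $D_k\neq X_k$, and then asserts $\cO_{X_k}((H_{k(N)}+D_k)^{>0}_1)=\cO_{X_k}$ ``for all $k$''.

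In fact the statement itself fails, so this step cannot be repaired. Let $S=\mathrm{Bl}_{p_1,p_2}\mathbb{P}^2$ with exceptional curves $E_1,E_2$, and let $\nu\colon S\to X$ glue $E_1$ to $E_2$ along an isomorphism.
\begin{itemize}
\item $X$ is an irreducible projective surface, since the ample bundle $3\pi^*H-E_1-E_2$ descends.
\item Along the image curve $C\cong\mathbb{P}^1$, $X$ is locally $\{yz=0\}\subset\C^3$. Hence $X$ is lci with $\lcdef(X)=0$, and Du Bois, so $\Om\simeq\cO_X$.
\item $M=\pi^*\cO_{\mathbb{P}^2}(1)$ is big and nef and trivial on $E_1,E_2$. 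It descends to a big and nef $\cL$ with $\cL|_C\cong\cO_C$.
\end{itemize}
Tensor $0\to\cO_X\to\nu_*\cO_S\to\cO_C\to0$ with $\cL^{-1}$ and use $H^0(S,M^{-1})=H^1(S,M^{-1})=0$. This gives $H^1(X,\Om\otimes\cL^{-1})\cong H^0(C,\cO_C)=\C\neq0$, although $1<2=n-\lcdef(X)$.

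Here the method only yields vanishing for the twisted complex, which is $\nu_*\cO_S$; that vanishing is just Kawamata--Viehweg on $S$. The argument does go through when $F$ and $X_\bullet$ can be chosen so that no component of $X_\bullet$ maps into $\operatorname{Supp}F$. Examples are $X$ smooth, or $\cL$ ample, where $F=0$.
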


\begin{proof}
 Since $\cL$ is big and nef,  there exists an effective divisor $D$ such that for $N \gg 0$, there is an isomorphism $\cL^{N} \otimes \cO_{X}(-D) \cong \cO_{X}(\cA_{N})$, where $\cA_{N}$ is an ample divisor. Furthermore, we may assume the linear system $|\cA_{N}|$ is basepoint-free \cite[Thm. 2.3.9]{LarasfeldII}.  Let $H_{N}$ be a general member of the linear system $|\cA_{N}|$. Then the open set $U_{N} = X \backslash (H_N +D)$ is affine. By the previous theorem 
    $$ H^{i}(X,\underline{\Omega}^{0}_{X} ((D+H_{N})_{1}^{>0})) \otimes \cL^{-1}) = 0 \quad \text{for $ i< n - \lcdef(U_{N})$.}$$
    Since $U_{N}$ is an open subset of $X$, we have $\lcdef(U_{N}) \leq \lcdef(X)$. Therefore, 
     $$ H^{i}(X,\underline{\Omega}^{0}_{X} (D+H_{N})_{1}^{>0} \otimes \cL^{-1}) = 0 \quad \text{for $ i< n - \lcdef(X)$.}$$
     Let $\epsilon: X_{\bullet} \rightarrow X$ be a cubical hyperresolution of the pair $(X, D)$. As $H_{N}$ is a general member of a basepoint-free linear system, $\epsilon_{H_{N}}:H_{\bullet(N)}=H_{N} \times _{X}X_{\bullet} \rightarrow H_{N}$ is also a hyperresolution. Furthermore, $H_{k(N)} + D_{k}$ has simple normal crossing support on $X_{k}$ for each $k$ such that $D_{k} \neq X_{k}.$ Recall,
      $$\underline{\Omega}^{0}_{X} (D +H_{N})_{1}^{>0} )\simeq$$
      $$\fC(\bR \epsilon^{0}_{*}(\cO_{X_{0}}((H_{0(N)}+D_{0})^{>0}_{1}), \fC(\bR \epsilon^{1}_{*}\ \cO_{X_{1}}(H_{1(N)}+D_{1})^{>0}_{1})[-1]),\fC(\cdots))[-1].$$
     Now choose $N\gg 0$ so that $\cO_{X_k}(H_{k(N)}+D_{k})^{>0}_{1}) = \cO_{X_{k}}$ for all $k$, which gives us $\underline{\Omega}^{0}_{X} (D+H_{N})_{1}^{>0} = \underline{\Omega}^{0}_{X}$ for $N \gg 0.$
\end{proof}

\begin{cor}
     Let $X$ be an irreducible projective variety of dimension $n$, and $\cL$ a big and nef line bundle on $X$. If the local cohomological defect of $X$ is zero (e.g. $X$ is a local complete intersection), then
    $$ H^{i}(X, \Om \otimes \cL^{-1}) = 0 \quad \text{for $ i< n.$}$$
\end{cor}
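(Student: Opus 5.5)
The corollary follows directly from Theorem \ref{nefthm} by specializing to $\lcdef(X)=0$. Once $\lcdef(X)=0$ is known, Theorem \ref{nefthm} gives
$$H^{i}(X,\Om\otimes\cL^{-1})=0 \quad \text{for } i<n-\lcdef(X)=n,$$
which is exactly the claim. So the plan is to apply Theorem \ref{nefthm} verbatim and spend the remaining effort justifying the parenthetical example: a local complete intersection has vanishing local cohomological defect.

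For the lci case, the argument is local, since $\lcdef(X)$ is defined through the perverse cohomology sheaves ${}^{p}\cH^{-\ell}(\Q_{X}[n])$, and vanishing of a sheaf can be checked on an open cover. So I may assume $X\subset M$ is a closed subvariety of a smooth variety $M$ of dimension $n+r$, cut out by $r$ equations $f_{1},\dots,f_{r}$.

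The claim $\lcdef(X)=0$ amounts to saying that $\Q_{X}[n]$ is perverse. Two facts give this.

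\emph{Upper bound.} For any variety of pure dimension $n$, $\Q_{X}[n]\in{}^{p}D^{\leq 0}$. This is the same fact already used in Section \ref{S5} for $\Q_{Y}[n]$. So ${}^{p}\cH^{k}(\Q_{X}[n])=0$ for $k>0$.

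\emph{Lower bound.} For a complete intersection, $\Q_{X}[n]\in{}^{p}D^{\geq 0}$; this is the standard theorem (Lê's theorem / Karchyauskas; see \cite{PopaShen} for the statement $\lcdef=0$ for lci). One route: the hypersurface case follows from Artin vanishing applied to the affine open complement, through the triangle $i^{!}\Q_{M}\to\Q_{M}\to \bR j_{*}\Q_{M\setminus X}$. The general case then follows by induction on $r$, viewing $X$ as a hypersurface in the complete intersection $\{f_{1}=\dots=f_{r-1}=0\}$. This gives ${}^{p}\cH^{-\ell}(\Q_{X}[n])=0$ for $\ell>0$.

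Together these show $\Q_{X}[n]$ is perverse, hence $\lcdef(X)=0$.

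The only genuine obstacle is this lci perversity statement. It is a known result, so I would cite it (via \cite{PopaShen}) rather than reprove it; the corollary itself is then immediate.
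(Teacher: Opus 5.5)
Your proposal is correct and matches the paper: there, too, the corollary is an immediate specialization of Theorem \ref{nefthm} to $\lcdef(X)=0$, with the lci case taken as the standard fact that $\Q_X[n]$ is perverse. In your sketch of that fact, the ambient $X'=\{f_1=\cdots=f_{r-1}=0\}$ in the inductive step is singular, so the $i^{!}$-triangle for $\Q_{X'}$ no longer gives information about $\Q_X$ through Verdier self-duality. Use instead the triangle $j_{!}j^{*}K\to K\to i_{*}i^{*}K$ with $K=\Q_{X'}[n+1]$, which is perverse by induction, and note that $j_{!}$ is t-exact because $j$ is an affine open immersion; alternatively, apply your $i^{!}$-triangle to $\D(\Q_{X'}[n+1])$.
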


\printbibliography

\end{document}